\documentclass[11pt]{amsart}

\usepackage[T1]{fontenc}
\usepackage[utf8]{inputenc}
\usepackage{lmodern}
\usepackage{microtype}
\usepackage{mathtools}
\usepackage{comment}
\usepackage{amssymb}
\usepackage[hmargin=1.08in,vmargin=1in]{geometry}
\usepackage{xcolor}
\usepackage{appendix}
\usepackage[normalem]{ulem}
\colorlet{linkblue}{blue!40!black}
\colorlet{urlblue}{blue!50!black}
\usepackage[
  colorlinks=true,
  linkcolor=red,
  citecolor=blue,
  urlcolor=blue
]{hyperref}
\usepackage{aliascnt}
\usepackage[noabbrev]{cleveref}

\allowdisplaybreaks
\numberwithin{equation}{section}
\newtheorem{theorem}{Theorem}[section]
\newaliascnt{proposition}{theorem}
\newtheorem{proposition}[proposition]{Proposition}
\aliascntresetthe{proposition}
\newaliascnt{lemma}{theorem}
\newtheorem{lemma}[lemma]{Lemma}
\aliascntresetthe{lemma}
\newaliascnt{corollary}{theorem}
\newtheorem{corollary}[corollary]{Corollary}
\aliascntresetthe{corollary}
\theoremstyle{definition}
\newaliascnt{definition}{theorem}
\newtheorem{definition}[definition]{Definition}
\aliascntresetthe{definition}
\theoremstyle{remark}
\newaliascnt{remark}{theorem}
\newtheorem{remark}[remark]{Remark}
\aliascntresetthe{remark}
\newaliascnt{notation}{theorem}
\newtheorem{notation}[notation]{Notation}
\aliascntresetthe{notation}

\DeclareMathOperator{\Ad}{Ad}
\DeclareMathOperator{\Id}{Id}
\DeclareMathOperator{\Spec}{Spec}

\crefname{theorem}{Theorem}{theorems}
\crefname{proposition}{Proposition}{propositions}
\crefname{lemma}{Lemma}{lemmas}
\crefname{corollary}{Corollary}{corollaries}
\crefname{definition}{Definition}{definitions}
\crefname{equation}{equation}{equations}

\title{$q$-oper structures 
on a formal punctured disc}
\author{Rudrendra Kashyap}
\address{Department of Mathematics, University of Pittsburgh,
Pittsburgh, Pennsylvania, USA
\newline
\href{mailto: ruk26@pitt.edu}{ruk26@pitt.edu}}
\author{Rahul Singh}
\address{Yau Mathematical Sciences Center, Tsinghua University,
Beijing, China
\newline
\href{mailto: 95rahul32@gmail.com}{95rahul32@gmail.com}
}

\hypersetup{
 pdftitle={q-Oper Structures on a Formal Punctured Disc},
  pdfauthor={Rudrendra Kashyap,Rahul Singh},
  pdfsubject={Local existence of q-oper structures on the punctured formal disc},
  pdfkeywords={q-connection, q-oper, twisted conjugacy, loop group, Coxeter cell, extended affine Weyl group}
}

\begin{document}

\begin{abstract}
Let $G$ be a connected reductive complex algebraic group and let
$q\in\mathbb{C}^{\times}$ be not a root of unity. We prove that every $(G,q)$-connection on a formal punctured disc admits a $(G,q)$-oper structure.
\end{abstract}

\maketitle
\setcounter{tocdepth}{1}
\tableofcontents
\section{Introduction}
Opers are connections equipped with a reduction to a Borel subgroup satisfying a certain transversality condition. A fundamental existence theorem of Frenkel and Zhu \cite{FZ} states that every flat $G$-bundle on a formal punctured disc admits an oper structure. Equivalently, the forgetful map from $G$-opers to $G$-connections on a formal punctured disc is surjective. For $G=GL_n$, this is the familiar cyclic vector theorem \cite[Lemme~1.3]{Del} for differential modules. For an arbitrary connected reductive group, \cite{FZ} proves the existence of oper structures by introducing deformed affine Springer fibers and showing that a certain fiber contains a so-called regular point \cite[Theorem~4]{FZ}. The parahoric version of the existence of a regular point has also been established in \cite[Theorem 5.13]{HHSZ}. Moreover, \cite[Corollary 5.14]{HHSZ} proved a parahoric analog of the Borel reduction step in \cite{FZ}, however, it does not establish the existence of oper structures.

The corresponding global analog of this problem for smooth projective curves is more subtle. Arinkin \cite{Ar} proved that every meromorphic $G$-connection on a
smooth projective curve admits a possibly degenerate oper structure; in particular, every irreducible meromorphic $G$-connection admits a generic oper structure. Here, using the notations in \cite{Ar}, possibly degenerate oper structure on a flat $G$-bundle is a $B$-reduction such that for any trivialization compatible with the reduction, the connection is a section of $\mathfrak{g}^{(-1)}\otimes K$, and a generic oper means that each negative simple root component is non-zero at the generic point of the curve. The purpose of this paper is to establish the existence theorem over a formal punctured disc in the setting of $q$-difference connections.

Opers provide a geometric description of the spectrum of certain quantum integrable systems. For the Gaudin models associated to a simple Lie algebra, the spectrum of these models are encoded by certain opers on $\mathbb{P}^1$ \cite{FFR,FFT,Fre}: solution of the Bethe ansatz equations are described by non-degenerate Miura opers on $\mathbb{P}^1$ \cite{Fre}. Here, a Miura oper is an oper with additional reduction to a Borel subgroup that is preserved by the underlying connection. We remark that the non-degeneracy condition in \cite{Fre} is different from that in \cite{Ar}. See \cite{Fre} and the references therein for more details. 

In the $q$-difference setting, the $q$-opers are related to the Heisenberg spin chains. More precisely, nondegenerate twisted Miura--Pl\"ucker $q$-opers on $\mathbb{P}^1$ correspond to nondegenerate solutions of $QQ$-systems and Bethe Ansatz equations of XXZ-type spin chains associated with the quantum affine algebra $U_{q}(\hat{\mathfrak{g}})$, where $\mathfrak{g}$ is a simply laced Lie algebra \cite{FKSZ}. Moreover, $q$-opers on a punctured disc are related to $q$-deformed classical $\mathcal W$-algebras through a $q$-difference analog of the Drinfeld--Sokolov reduction. For more details, see \cite{FRS,SS}.

Let $q\in\mathbb{C}^{\times}$ be not a root of unity. Let $\mathcal{O}=\mathbb{C}[[t]],
  F=\mathbb{C}((t))$ and consider
\begin{align}
\sigma_q:F\rightarrow F,\qquad
  \sigma_q(f(t))=f(qt).
\end{align}
Let $G$ be a connected reductive complex algebraic group of semisimple rank $r$.  Fix opposite Borel subgroups
$B_+$ and $B_-$ with a common maximal torus $T$, and put $W_G=N_G(T)/T$.  Let
$\Delta=\{\alpha_i\}_{i=1}^{r}$ be the set of simple roots of the derived root
system determined by $B_+$ and $T$. Let us choose once and for all an order
$\alpha_{1},\ldots,\alpha_{r}$ and put
$c=s_{1}\cdots s_{r}\in W_G$.
For each $i$, choose a lift $\dot s_i\in N_G(T)$ of $s_i$, and write
\[
  \dot c=\dot s_1\cdots\dot s_r\in N_G(T),
\]
which is a lift of $c$.

The notion of a $(G,q)$-connection was defined in \cite[Section~2]{FKSZ} for a simple, simply connected complex algebraic
group $G$ and a principal $G$-bundle over $\mathbb P^1$. The same
definition gives the notion of a $(G,q)$-connection on a principal
$G$-bundle over a formal punctured disc for any connected reductive group $G$. Since every principal $G$-bundle is trivial by Steinberg's theorem \cite[Chapter III, \S2.3]{Ser}, a $(G,q)$-connection over $D^{\times}=\Spec(\mathbb{C}((t)))$ can be represented by an element $A\in G(F)$. Changing the trivialization of the principal $G$-bundle by $g\in G(F)$ gives the $q$-conjugate of $A$ by $g$:
\begin{equation}\label{eq:gauge-intro}
  A\longmapsto A^g:=\sigma_q(g)Ag^{-1}.
\end{equation}
In particular, $(A^g)^h=A^{hg}$.

Since Steinberg's theorem \cite[Chapter III, \S2.3]{Ser} holds for any connected linear algebraic group, a $B_-$-reduction over
$D^{\times}$ can be trivialized by a $q$-gauge
transformation. Then with respect to this trivialization, the $(G,q)$-oper condition (cf. \cite[Definition~2.1]{FKSZ}) reads as follows:
\begin{align}\label{eq:opercondition}
  A\in B_-(F)\dot cB_-(F).
\end{align}
So, the $q$-conjugacy class of $A$ admits a $(G,q)$-oper structure if
and only if it meets this Coxeter cell.  
We note that the $(G,q)$-oper condition \eqref{eq:opercondition} is independent of the choice of $B_{-}$-trivialization. 

Our main result is the following.
\begin{theorem}\label{thm:main-intro}
Let $G$ be a connected reductive complex algebraic group and let
$A\in G(F)$. Then there exists $g\in G(F)$ such that
\[
  A^g\in B_-(F)\dot cB_-(F).
\]
In other words, every $(G,q)$-connection over a formal punctured disc admits a $(G,q)$-oper structure.
\end{theorem}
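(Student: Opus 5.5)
We follow the strategy of Frenkel--Zhu, adapted to $q$-conjugation. First we reduce to a normal form. By a $q$-analogue of the Babbitt--Varadarajan/Levelt reduction for $q$-difference equations, and since $q$ is not a root of unity, after passing to a finite extension $F_m=\mathbb{C}((t^{1/m}))$ every $A$ is $q$-conjugate to an element of the form $t^{\lambda}\,n$. Here $\lambda$ is a rational cocharacter, and $n$ lies in a unipotent or regular-semisimple part of a Levi subgroup and is "constant" (it commutes with $\sigma_q$ up to a torus twist). The oper condition is a union of $B_-(F)\times B_-(F)$-double cosets and is defined over $F$. We would therefore show the statement over $F$ by a Galois descent / density argument.

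A cleaner route is to avoid normal forms and argue via a $q$-deformed affine Springer fiber directly. We fix $A$ and consider
\[
X_A=\{\,g\in G(F)/B_-(F)\;:\;\sigma_q(g)Ag^{-1}\in B_-(F)\dot cB_-(F)\,\}^{\text{twisted}},
\]
that is, the set of $B_-$-reductions $gB_-$ such that the relative position of $\sigma_q(g)B_-$ and $A\,gB_-$ in the affine flag-type Bruhat decomposition of $G(F)/B_-(F)$ is $c$. Since $G(F)=\bigsqcup_{w} B_-(F)\dot w B_-(F)$ with $w\in W_G$ (Bruhat decomposition over the field $F$), the relative position $w(g)\in W_G$ is always defined. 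We must find $g$ with $w(g)=c$.

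The key steps are as follows. (1) Show that the function $g\mapsto w(g)$ is lower semicontinuous in the ind-scheme topology on $G(F)/B_-(F)$, that generic $g$ gives a relative position of length at least $r$, and that one can reach an element whose relative position has support containing all simple reflections. This uses the fact that $q$ is not a root of unity: $\sigma_q$ acts on $t^k$ by $q^k\neq 1$, so a perturbation $g\mapsto u g$ with $u\in U_-(F)$ generic changes the $\alpha_i$-components freely. (2) Having produced $g$ with $A^g\in B_-(F)\dot w B_-(F)$ with $w$ of full support, reduce $w$ to a Coxeter element. We would do this by induction on $\ell(w)$: if $w$ is not Coxeter, we would conjugate by a suitable $\dot s_i$ combined with a $U_{-\alpha_i}(F)$-factor. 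We solve the resulting rank-one equation $\sigma_q(x)a-xb=c'$ in $F$ for a Laurent series $x$, which is solvable because $q^k a_0\neq b_0$ for all but finitely many $k$. This shortens $w$ while keeping full support. (3) Show that all Coxeter cells for different orderings are related by the same kind of $q$-conjugation, so we may land on the fixed $c=s_1\cdots s_r$. Here we use that Coxeter elements are conjugate by cyclic shifts $s_1c s_1$, realized by $q$-gauge by $\dot s_1$ times a lower unipotent.

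The main obstacle is step (2). The rank-one $q$-difference equations must be solved while keeping control of the other root components. In particular, we must ensure that a cancellation produced by solving for $x$ does not destroy nonvanishing of another simple root coordinate. I would first try the Frenkel--Zhu device: choose a regular point in a suitable $q$-deformed Springer fiber, namely the locus of $B_-$-reductions where $A^g\in\mathfrak{g}$-graded position $\le c$, and show that this locus is nonempty by a dimension count. The generic $q$-conjugation orbit is large because $1-q^k\ne 0$. Then we take a point of minimal relative position and argue that minimality forces exactly $c$.
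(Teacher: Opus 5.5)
\textbf{There is a genuine gap.} All the content of your outline sits in step (2), or in the ``regular point'' of your last paragraph, and the one justification you give there is wrong.

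The rank-one equation $\sigma_q(x)a-xb=c'$ is not solvable in $F$ merely because $q^ka_0\neq b_0$ for all but finitely many $k$. When $v(a)=v(b)$, the operator $x\mapsto\sigma_q(x)a-xb$ multiplies the leading coefficient $x_k$ by $q^ka_0-b_0$. So in general it maps $t^N\mathcal O$ onto $t^{N+v(a)}\mathcal O$ only for $N\gg0$, and it need not be surjective on $F$: already $\sigma_q(x)-x=1$ has no solution. You can therefore only absorb corrections $c'$ of large valuation, but your length-reducing gauge transformations come with no valuation control. You also give no reason why the length can always drop without losing a simple reflection.

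This equation is the only place where your step (2) distinguishes $q$-conjugation from ordinary conjugation. For ordinary conjugation the analogous claim is false, since a central element of $G(\mathbb C)$ never meets $B_-\dot cB_-$. So this is exactly where a proof would have to happen.

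The ``minimal relative position'' variant points the wrong way. If the class meets $B_-(F)$ (e.g.\ $A=1$, or any integral $A$ by Corollary~\ref{cor:borel-normal}), the minimum is $e$, not $c$. What you need is a point of the locus ``position $\le c$'' at which all simple-root components are nonzero, and that is the theorem itself. No Frenkel--Zhu dimension count is available here, because the leading Laurent coefficient of $A$ need not lie in $G$ or $\mathfrak g$. Your first route has a similar hole: nothing makes an oper reduction found over $\mathbb C((t^{1/m}))$ descend to $F$. Step (1) is also only heuristic.

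\textbf{How the paper avoids these problems.} It never shrinks a cell from above. First it reaches a \emph{partial} Coxeter cell over $F$ itself, with no ramification:
\begin{enumerate}
\item Start from the Nie--Zhou representative $sh=hs$, where $s=n\sigma_q$ is straight and of length zero in a Levi subgroup and $h$ is pro-unipotent.
\item Take a cocharacter associated to $\log h$ in $Z_s^\circ$ and move it into $T$. This puts $h$ in a unipotent radical while keeping $n$ in the Levi factor.
\item Pass to a minimal Levi subgroup in which the standard factor becomes superbasic. This forces every component to be of type $A$, with finite Weyl part a product of Coxeter elements.
\end{enumerate}
So the pieces that would need ramification in your normal form appear over $F$ essentially as $t^\mu\dot c_J$. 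This gives $C\in B_-(F)\dot c_JB_-(F)$ (Theorem~\ref{thm:partial-cell}).

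The paper then \emph{lengthens} $c_J$ to $c$ by a $t$-adically small perturbation: $Cv_n\in B_-(F)\dot cB_-(F)$ with $v_n\to1$ (Lemma~\ref{lem:coxeter-approximation}). It realizes $v_n=\mathcal L_C(k)$ using quasi-openness of the $q$-twisted Lang map near $1$ (Theorem~\ref{thm:lang-open}). This is where your ``all but finitely many $k$'' observation is used correctly. Lemma~\ref{lem:right-inverse-lattice} gives a right inverse of $\operatorname{Ad}(C^{-1})\circ\sigma_q-\operatorname{Id}$ only on $t^{N_0}\mathfrak g(\mathcal O)$, with bounded loss of valuation, and a contraction argument absorbs the nonlinear terms.

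Your step (3) is fine: write $A=b\dot s_1g_2$, remove $b$ by a $q$-gauge transformation, then conjugate by the constant $\dot s_1$. The paper does not need it, because it conjugates the finite Weyl part to a partial Coxeter element by a constant element and transports the Borel subgroup.
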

\begin{remark}
\begin{enumerate}
\item A $(GL_n,q)$-connection over $D^{\times}$ represented by $A\in GL_{n}(F)$ is the same as a $q$-difference module \cite[Definition 1.1.5]{DV} $(M,\Phi_q)$ of rank $n$. One can show that $A$ can be $q$-conjugated to a $(GL_n,q)$-oper if and only if there exists a cyclic vector $m\in M$, that is, an element $m$ such that $\{m,\Phi_{q}(m),\cdots,\Phi_{q}^{n-1}(m)\}$ is an $F$-basis of $M$. The existence of such an element is shown in \cite[Lemma 1.3.1]{DV}. 

\item Let $G$ be a semisimple and simply connected algebraic group. Steinberg proved \cite[Theorem 1.4]{St} that every regular conjugacy class in $G(\mathbb{C})$ intersects the Coxeter cell $B_{-}(\mathbb{C})\dot{c}B_{-}(\mathbb{C})$.
Thus, \cref{thm:main-intro} may be viewed as a $q$-difference analog of Steinberg's theorem for formal $(G,q)$-connections. One of the important differences is that \cref{thm:main-intro} does not require a regularity assumption on the $(G,q)$-connection.
\end{enumerate}
\end{remark}
We are also interested in two important subclasses of $q$-connections: integral and indecomposable $q$-connections. Following \cite{BG}, we make the following definition.
\begin{definition}
    We call a $q$-conjugacy class \emph{integral} if it meets $G(\mathcal O)$.
\end{definition}
In Section \ref{app:altproof} we give a separate proof of \cref{thm:main-intro} in the case of integral $q$-connections.

\begin{theorem}\label{thm:integral-intro}
Let $A\in G(\mathcal O)$. Then there exists $g\in G(\mathcal O)$ such that
\[
  A^g\in B_-(F)\dot cB_-(F).
\]
\end{theorem}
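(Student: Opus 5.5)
We need a plan for the integral case: given $A\in G(\mathcal O)$, find $g\in G(\mathcal O)$ with $A^g\in B_-(F)\dot cB_-(F)$. The idea is to exploit the $t$-adic filtration of $G(\mathcal O)$ and the fact that $\sigma_q$ acts on $t^k$ by the scalar $q^k$, which is not a root of unity.

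\emph{Step 1: work modulo $t$.} Let $A_0=A(0)\in G(\mathbb C)$. For constant $g\in G(\mathbb C)$ we have $\sigma_q(g)=g$, so $A^g=gAg^{-1}$ is ordinary conjugation. The Coxeter cell $B_-\dot cB_-$ is open in $G$ only when $c$ is the longest element, so we cannot just move $A_0$ into it. Instead we use that the cell is defined over $F$ by nonvanishing of certain functions. Concretely, $X\in B_-(F)\dot c B_-(F)$ can be tested via generalized minors: $X$ lies in the Bruhat cell of $c$ iff suitable matrix coefficients $\langle v_{\lambda}^{-}, X\, v\rangle$ in fundamental representations vanish or are nonzero according to the Bruhat order. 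This is a locally closed condition. Since it is not open, genericity alone will not suffice; we need an actual construction.

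\emph{Step 2: produce a constant-coefficient $q$-connection in the cell.} The plan is to $q$-gauge $A$ within $G(\mathcal O)$ to a form whose obstruction lies in the first-order term. First I would show that $A$ is $G(\mathcal O)$-$q$-conjugate to an element of the form $A_0' \cdot u$, with $A_0'\in G(\mathbb C)$ and $u\in G(t\mathcal O)$ arbitrarily adjustable. Then I would try to choose $g=\exp(tX)h$ with $h\in G(\mathbb C)$ and $X\in\mathfrak g$. To first order, $A^{g}\equiv hA_0h^{-1}\bigl(1+t(\Ad(hA_0h^{-1})^{-1}qX - X+\dots)\bigr)$. Because $q\neq1$, the map $X\mapsto q\Ad(C^{-1})X-X$ is generically invertible, so the first-order term is freely adjustable. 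More generally, at order $t^k$ the operator $q^k\Ad(C^{-1})-1$ is invertible for all but finitely many $k$. This uses that $q$ is not a root of unity: $q^k$ avoids the finitely many eigenvalues of $\Ad C$ for large $k$.

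\emph{Step 3: land in the cell using higher-order terms.} Write the target as $B_-(F)\dot cB_-(F)$. Using the Steinberg cross-section $\dot c\,U$ with $U=\prod U_{\alpha_i}$, I would aim to show that the map $(b_1,u,b_2)\mapsto b_1\dot c u b_2$ composed with $q$-gauge over $F$ is dominant. Then I would pick the free higher-order coefficients from Step 2 so that the relevant minors have the right vanishing pattern. The hardest step is exactly this one: the cell condition involves equalities, not just inequalities, so we must gauge to force some minors to vanish identically as power series while others stay nonzero. My first attempt would be an inductive $t$-adic construction. Using invertibility of $q^k\Ad(C^{-1})-1$ for large $k$, I would solve order by order for $g$ such that $A^g$ equals $b_-\dot c n$ exactly. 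Any finitely many low orders would be handled by allowing $g$ constant and using Steinberg's theorem on $G(\mathbb C)$ after perturbing $A_0$ to be regular via the $t$-adic freedom. Convergence is automatic because we work formally in $\mathcal O$.
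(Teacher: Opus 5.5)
\paragraph{There is a genuine gap below the last resonant order.}
Your observation in Step~2 is correct: $q^k\Ad(C^{-1})-\Id$ is invertible for all large $k$, so an integral gauge in $G_N$ can freely change every coefficient of order $\geq N$. This is exactly the rigidity statement the paper uses (\cref{thm:rigidity}). The problem is everything below that order, for three reasons.

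\emph{The constant term is an invariant.} For $g\in G(\mathcal O)$ one has $A^g(0)=g(0)A(0)g(0)^{-1}$. So you cannot ``perturb $A_0$ to be regular via the $t$-adic freedom,'' and Steinberg's theorem on $G(\mathbb C)$ is the wrong tool. For $A=1$, every integral gauge of $A$ has constant term $1\notin B_-(\mathbb C)\dot cB_-(\mathbb C)$ (for $r\geq1$), yet the theorem holds. The $F$-cell has to be reached through higher-order terms, not through the constant term.

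\emph{Low-order terms are not freely adjustable.} This fails for the first-order term, and for the order-$m$ term at each of the finitely many $m$ with $q^m$ an eigenvalue of $\Ad(s_0)$, where $s_0$ is the semisimple part of $A_0$. At such orders the component in the resonant space $E_m=\{X:\Ad(s_0)X=q^mX\}$ cannot be removed by any gauge.

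\emph{Step~3 is never made concrete.} You propose forcing certain minors to vanish identically by choosing coefficients order by order, but this is not specified. The target $b_-\dot cn$ is also not chosen so that it matches $A$ at the resonant orders. As written, nothing actually lands in the cell.

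\paragraph{The two missing ideas.}
The paper sidesteps the ``equalities versus inequalities'' problem entirely.

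\emph{(i) Borel reduction.} At each order, kill the nonresonant component using $L_m(a_0)^{-1}$ on $F_m$. Then choose a Borel $B\ni a_0$ with $E_m\subset\operatorname{Lie}R_u(B)$ for all $m\geq1$. Such a $B$ comes from a cocharacter $v$ with $\langle\alpha,v\rangle=d(\alpha)$, where $\alpha(s_0)=q^{d(\alpha)}$, perturbed to be compatible with a Borel of $Z_G(s_0)^\circ$ containing $u_0$. This yields $A^{g}=a_0\exp(tY_1)\cdots\exp(t^MY_M)\in B(\mathcal O)$, and a constant conjugation moves it into $B_-(\mathcal O)$.

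\emph{(ii) Insertion.} Let $C\in B_-(\mathcal O)$. The explicit element $v_N=x_1(t^N)\cdots x_r(t^N)$ lies in $B_-(F)\dot cB_-(F)$ by the rank-one identity, and $v_N\equiv1\pmod{t^N}$. Since left multiplication by $B_-(F)$ preserves the cell, $D=Cv_N$ lies in the Coxeter cell and $D\equiv C\pmod{t^N}$. Taking $N$ beyond all resonant orders, your high-order freedom gives $k\in G_N$ with $C^k=D$. No minors, dominance arguments, or Steinberg cross-section are needed.
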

Thus, in the integral case, not only does an oper representative exist, but the required gauge transformation can itself be chosen integral as well.
\begin{definition}
A $q$-conjugacy class in $G(F)$ is called \emph{indecomposable} if
it does not have a
representative in $M(F)$ for any proper Levi subgroup
$M\subsetneq G$.  
\end{definition}
The arguments in \cite{NZ} yield the following useful integrality statement in the semisimple simply connected case. 
\begin{theorem}\label{thm:indecomp-intro}
Let $G$ be a semisimple and simply connected complex algebraic group. Then every indecomposable $q$-conjugacy class in $G(F)$ is integral. 
\end{theorem}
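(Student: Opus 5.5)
We need to show that every indecomposable $q$-conjugacy class in $G(F)$, for $G$ semisimple simply connected, has a representative in $G(\mathcal O)$. The plan adapts the Newton-point / slope argument of \cite{NZ} to $q$-conjugacy. The key point is that $\sigma_q$ preserves $\mathcal O$ and the valuation, and acts on the residue field trivially. Consequently $q$-conjugation respects the Cartan decomposition $G(F)=\bigsqcup_{\lambda} G(\mathcal O)t^{\lambda}G(\mathcal O)$ up to an explicit scalar twist: $\sigma_q(t^\lambda)=q^{\lambda}t^\lambda$ with $q^\lambda\in T(\mathbb C)\subset G(\mathcal O)$.

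\emph{Step 1: reduction to the Iwahori--Bruhat form.} Using the Iwahori decomposition $G(F)=\bigsqcup_{w\in \widetilde W} I\dot w I$ (extended affine Weyl group $\widetilde W=W_G\ltimes X_*(T)$; for simply connected $G$ this is the affine Weyl group), write $A\in I\dot wI$. The standard He / Lusztig reduction for $\sigma$-conjugacy in loop groups transfers verbatim, because $\sigma_q$ stabilizes $I$ and fixes each $\dot w$ up to a factor in $T(\mathbb C)$. It shows that $A$ is $q$-conjugate, via elements of $I$ and lifts of simple reflections, to an element of $I\dot w I$ with $w$ of minimal length in its $\widetilde W$-conjugacy class. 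One also uses that $\sigma_q$-conjugation by the pro-unipotent radical $I^+$ can absorb higher-order terms. This is the Lang-type step: solve $\sigma_q(u)xu^{-1}=x'$ degree by degree. The degree-$n$ equation involves the operator $q^n\Ad(\cdot)-1$, which is invertible except for finitely many $n$ because $q$ is not a root of unity.

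\emph{Step 2: minimal length elements and Levi reduction.} For a minimal-length $w$, with translation part whose Newton vector $\nu\in X_*(T)_{\mathbb Q}$ is dominant, the He--Nie theory gives that $I\dot wI$ elements are $q$-conjugate into $M_\nu(F)$. Here $M_\nu$ is the centralizer of $\nu$, a Levi subgroup, and it is proper when $\nu\neq 0$ because $G$ is semisimple. This is the step I expect to be the main obstacle. In \cite{NZ} the argument uses a Frobenius-type $\sigma$; one must check that the contraction arguments (conjugating by $t^{N\nu}$ to push the unipotent parts of the parabolic $P_\nu$ to zero) still converge when $\sigma$ is replaced by $\sigma_q$. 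I would verify this directly: for $u\in U_{P_\nu}(F)$, solve $\sigma_q(v)\,m u\,v^{-1}\in M_\nu(F)$ by successive approximation along the root filtration of $U_{P_\nu}$. On each root subgroup $U_\beta$ with $\langle\beta,\nu\rangle>0$, the equation is of the form $\sigma_q(x)-a\,t^{k}x'=b$ with $k$ having positive average slope, and it is solvable in $F$ by a $t$-adic convergent recursion.

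\emph{Step 3: conclusion.} Indecomposability forbids $\nu\neq0$, so $\nu=0$. A minimal-length element of $\widetilde W$ with zero Newton point has finite order, and is therefore contained in a finite parahoric Weyl group. Its double coset $I\dot wI$ thus lies in a parahoric subgroup $P\supseteq I$. For simply connected $G$ every parahoric is $G(F)$-conjugate to one contained in $G(\mathcal O)$ up to conjugation by $t^{\mu}$ with $\mu\in X_*(T)$. More precisely, every parahoric is contained in a maximal parahoric $\mathcal P_x$, which is the stabilizer of a vertex $x$ of the alcove. One then needs to move $\mathcal P_x$ into $G(\mathcal O)$ by $q$-conjugation. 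Here I would use that $q$-conjugation by $t^\mu$ sends $y$ to $q^{\mu}t^{\mu}yt^{-\mu}$, so it acts as ordinary conjugation up to the factor $q^\mu\in T(\mathbb C)$. For non-special vertices, I would instead decompose $A$ inside the reductive quotient of $\mathcal P_x$ and apply Step 1 again within the finite-type Levi there, to land in $I\subset G(\mathcal O)$.
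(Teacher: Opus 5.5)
\textbf{Verdict: different route, with one step that fails.} Your outline can be made to work: reduce to a minimal-length Iwahori double coset, use indecomposability to force the Newton point to be $0$, then land in $G(\mathcal O)$. But Step~3, as written, relies on a false claim.

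\textbf{The failing step.} For simply connected $G$ one has $X_*(T)=Q^\vee$, so conjugation by $t^\mu$ with $\mu\in X_*(T)$ acts on the apartment by a type-preserving translation. It can never carry a vertex $x\neq 0$ of the base alcove to $0$, special or not. For example, take $G=SL_2$ and the hyperspecial vertex $x=\tfrac12\alpha^\vee$. The maximal parahoric $\mathcal P_x=\operatorname{diag}(1,t)\,SL_2(\mathcal O)\,\operatorname{diag}(1,t)^{-1}$ is not $SL_2(F)$-conjugate into $SL_2(\mathcal O)$. So the claim that parahorics can be moved into $G(\mathcal O)$ by $t^\mu$ is false, and your special-vertex branch collapses. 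Since all vertices are special for $SL_n$, that is the branch you would actually be using there.

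\textbf{The fix.} Your fallback through the reductive quotient is the right idea, and it works uniformly for all vertices. However, ``apply Step~1 again'' does not by itself land in $I$: a finite-type Deligne--Lusztig reduction only reaches $\bar B\dot u\bar B$ with $u$ of minimal length. The missing observation is this:
\begin{itemize}
\item $\sigma_q$ induces on $\bar{\mathcal P}_x$ an inner automorphism $\Ad(\tau)$ for some $\tau\in T(\mathbb C)$. It fixes $\bar T$ pointwise and scales each root subgroup $u_\alpha(ct^{k})$ of $\bar{\mathcal P}_x$ by $q^{k}$, with $k$ depending linearly on $\alpha$.
\item Hence $\bar\sigma_q(g)\bar A g^{-1}=\tau\,g(\tau^{-1}\bar A)g^{-1}$.
\item Every element of the connected reductive group $\bar{\mathcal P}_x$ lies in a Borel subgroup. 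So you can move $\bar A$ into the image $\bar B$ of the standard Iwahori.
\item Lifting $g$ to $\mathcal P_x$ then shows that $A$ is $q$-conjugate into $I\subset G(\mathcal O)$.
\end{itemize}

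\textbf{A smaller point.} You should also justify that your minimal-length $w$ with $\nu=0$ lies in a \emph{standard} finite parabolic subgroup; having finite order only gives a conjugate of one. This needs He--Nie's standard form for minimal-length elements, together with the compatibility of strong conjugacy with $q$-conjugation of Iwahori double cosets.

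\textbf{Comparison with the paper.} Even after this repair, your route is much longer. The paper applies \cref{lem:NZ-newton-levi} (extracted from \cite[Proposition~3.1]{NZ}) to $A^{-1}\sigma_q$. This gives $sh=hs$ with $s=n\sigma_q$, $p_M(s)\in\Omega_M$ and $h\in\mathcal U_M$. Indecomposability forces $M=G$. For simply connected $G$ one has $\Omega_G=1$, so $s=a\sigma_q$ with $a\in T(\mathbb C)$, while $h\in\mathcal U_G\subset G(\mathcal O)$.

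The shortcut you do not use is $\Omega_G=1$. With it, the straight part of the Nie--Zhou normal form is a constant torus element. Your Step~3 parahoric-to-Iwahori work is already contained in the fact that the class meets $s\,\mathcal U_G$.

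Likewise, the ``main obstacle'' in your Step~2 is not open. \cite{NZ} is written for $\sigma_q$ itself, not for a Frobenius, and its Proposition~3.1 is exactly the Newton--Levi reduction you propose to re-derive.

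What your approach offers is a more self-contained Bruhat--Tits argument. What the paper's offers is a few-line proof once the Nie--Zhou normal form is available.
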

In view of Theorem~\ref{thm:integral-intro}, we obtain that every indecomposable $q$-conjugacy class admits a $(G,q)$-oper structure. 
\subsection{Relation with earlier work} Theorem \ref{thm:main-intro} addresses an existence problem rather than a classification problem. For $|q|<1$ and a connected semisimple complex algebraic group $G$, \cite[Theorem~1.2]{BG} classified the integral $q$-conjugacy classes in $G(F)$ in terms of semistable principal $G$-bundles over the elliptic curve $\mathbb{C}^{\times}/q^{\mathbb{Z}}$. The classification does not assert the existence of $q$-oper structures. Therefore, the two results are complementary. In the integral case, our proof is inspired by the proofs of \cite[Proposition~2.1, Lemma~2.2]{BG}. 

\cite{FRS,SS} constructed a cross-section for the $q$-gauge action of the loop group of the unipotent radical of a Borel subgroup on a Coxeter-type cell (\cite{FRS} in the $\mathfrak{sl}_n$-case and \cite{SS} in the general semisimple simply connected case other than $E_6$). This construction provides coordinates on the space of $(G,q)$-opers on the punctured disc. However, it does not prove that an arbitrary $q$-connection admits a $q$-oper structure. 

Nie and Zhou \cite[Theorem~1.1]{NZ} proved the existence of a certain normal form for arbitrary $q$-connections that expresses each $q$-conjugacy class by a distinguished pair $(s,h)$, where $s$ is straight and $h$ is pro-unipotent and $sh=hs$. This normal form is the starting point of our proof, but it does not produce a $q$-oper representative by itself. The new steps are to pass from a distinguished pair to a partial Coxeter cell and then to find a representative in the Coxeter cell starting from a representative in a partial Coxeter cell. A key role in the proof is played by straight and superbasic elements, which were studied in \cite{HN}.

Although Theorem~\ref{thm:main-intro} is the direct $q$-analog of \cite{FZ}, its proof is not a $q$-deformation of the argument in \cite{FZ}. In the differential setting, if the connection has an expansion
\[
\nabla=d+B(t)\,dt, 
\qquad
B(t)=B_{r}t^{r}+B_{r+1}t^{r+1}+\ldots, \qquad
B_r\neq 0,
\]
then the leading coefficient $B_r\in\mathfrak{g}$. However, for a $q$-connection $A\in G(F)$, the lowest Laurent coefficient in a faithful representation $G\hookrightarrow GL_n$ need not belong to $G$ or $\mathfrak{g}$. For example, 
\[
A=\begin{pmatrix}
    t & 0\\
    0 & t^{-1}
\end{pmatrix}
=
t^{-1}\begin{pmatrix}
    0 & 0\\
    0 & 1
\end{pmatrix}
+
t\begin{pmatrix}
    1 & 0\\
    0 & 0
\end{pmatrix}
\in
SL_{2}(F).
\]
However, if $A\in G(\mathcal{O})$ is an integral $q$-connection and $g\in G(\mathcal{O})$, then the constant term behaves well under the corresponding $q$-gauge transformation:
\[
A^g(0)=g(0)A(0)g(0)^{-1},
\]
where for $h\in G(\mathcal{O})$, $h(0)$ denotes the reduction of $h$ mod $t$. This is one of the reasons why the proof of Theorem~\ref{thm:main-intro} is shorter for integral $q$-connections.

\subsection{Ideas of the proofs}
The proof of Theorem~\ref{thm:main-intro} consists of two steps: finding a representative in a partial Coxeter cell and then passing to the full Coxeter cell.

For the first step, Lemma~\ref{lem:NZ-newton-levi} gives a representative of the $G(F)$-conjugacy class of $A^{-1}\sigma_q$ (see Subsection \ref{conventions} for this notation) of the form $sh=hs$ in a Levi subgroup, where $s=n\sigma_q$ is standard with its projection to the extended affine Weyl group to be of length-zero and $h$ is pro-unipotent. Viewing $h$ as a unipotent element of the centralizer
of $s$, Proposition~\ref{prop:parabolic} uses an associated cocharacter to place $h$ in the unipotent radical of a parabolic subgroup while keeping $n$ in its Levi subgroup.
The Lemma~\ref{lem:inherit-standard} ensures that $s$ remains standard in this Levi subgroup. The Corollary~\ref{prop:superbasic-reduction} then reduces
$s$ further to a superbasic element in a minimal Levi subgroup. As a consequence, the Dynkin diagram of the derived subgroup of this Levi subgroup has only type $A$ components, and the finite Weyl component of the standard factor is conjugate to a product of Coxeter elements (see Lemma~\ref{lem:elliptic-superbasic}). After choosing a compatible Borel subgroup, we get
Theorem~\ref{thm:partial-cell}: the $q$-conjugacy class of $A$ contains a representative
\[
  C\in B_-(F)\dot c_JB_-(F)
\]
for some $J\subset\Delta$.

The second step in the proof of Theorem~\ref{thm:main-intro} uses the $q$-twisted version of the Lang map
\[
  \mathcal L_C(k)=C^{-1}\sigma_q(k)Ck^{-1}.
\]
It can be shown that the linearization of $ \mathcal{L}_{C}$ at $1$ is
$\operatorname{Ad}(C^{-1})\circ\sigma_q-\operatorname{Id}$. The Lemma~\ref{lem:right-inverse-lattice} constructs a continuous right inverse to the operator $\operatorname{Ad}(C^{-1})\circ\sigma_q-\operatorname{Id}$ on sufficiently small $t$-adic neighborhood of $0\in\mathfrak{g}(F)$, with a uniform bound on the loss of valuation. Using the exponential map and the fixed point theorem, we show that $\mathcal{L}_C$ is quasi-open (Theorem~\ref{thm:lang-open}). Consequently, every sufficiently small multiplicative deformation of $C$ can be realized by a $q$-gauge transformation of $C$.

Next,
we construct a sequence $v_n\to1$ in Lemma~\ref{lem:coxeter-approximation} such that
\[
  Cv_n\in B_-(F)\dot cB_-(F).
\]
This construction inserts factors $x_i(t^n)$ in the positions of the simple reflections omitted from $c_J$.
By quasi-openness, we can write $v_n=\mathcal L_C(k)$ for sufficiently large $n$, and hence $C^k=Cv_n$ belongs to the full Coxeter cell. This proves
Theorem~\ref{thm:promotion-intro}, which, together with
Theorem~\ref{thm:partial-cell}, proves
Theorem~\ref{thm:main-intro}.

\smallskip
For the proof of Theorem~\ref{thm:integral-intro}, we give a separate argument that avoids the reduction to superbasic elements. Let $A\in G(\mathcal O)$ and let $a_0\in G(\mathbb{C})$ denote the reduction of $A$ modulo $t$. Let $a_0=s_0u_0$ be the Jordan decomposition of $a_0$. On the successive quotients of the higher congruence subgroups described in Proposition~\ref{lem:congruence}, the change in degree $m$ under a $q$-gauge transformation is governed by the following operator on $\mathfrak{g}$ (see the proof of Proposition~\ref{prop:aligned}):
\[
  L_m(a_0)=q^m\operatorname{Ad}(a_0^{-1})
           -\operatorname{Id}_{\mathfrak g}.
\]
The operator $L_m(a_0)$ is
invertible away from the resonant eigenspace (see Lemma~\ref{lem:nonresonant})
\[
  E_m=\{X\in\mathfrak g:
          \operatorname{Ad}(s_0)X=q^mX\}.
\]
The Lemma~\ref{lem:compatible-borel} shows the existence of a Borel
subgroup $B$ that contains $a_0$ and 
$E_m\subset\operatorname{Lie}R_u(B)$ for every $m\geq1$.
Eliminating the nonresonant terms gives the
normal form of Proposition~\ref{prop:aligned}:
\[
  A^g=a_0\exp(tY_1)\exp(t^2Y_2)\cdots\exp(t^MY_M),
  \qquad g\in G_1,\quad Y_m\in E_m.
\]
Thus, the resulting representative lies in
$B(\mathcal O)$, and then conjugation by an element of $G(\mathbb{C})$ gives
Corollary~\ref{cor:borel-normal}: an integral $q$-gauge transforms $A$ into an element $C\in B_-(\mathcal O)$.

The final step in the proof of Theorem~\ref{thm:integral-intro} is to insert the Coxeter factors while
preserving the integral $q$-conjugacy class.
Proceeding similarly as in the proof of Proposition~\ref{prop:aligned} gives Theorem~\ref{thm:rigidity}: every $D\in G(\mathcal O)$ congruent to $C$ modulo $t^N$ has the form $C^k$ for some $k\in G_N$, where $N$ is large enough so that $E_m=0$ for $m\geq N$. Now Lemma~\ref{lem:coxeter-factor} gives
\[
  v_N=x_1(t^N)\cdots x_r(t^N)
  \equiv1\pmod{t^N},
  \qquad
  v_N\in B_-(F)\dot cB_-(F).
\]
Since $C\in B_-(\mathcal O)$, the element $D:=Cv_N$ belongs to the full Coxeter cell and is congruent to $C$ modulo $t^N$ for all $N$. As a conseqeunce, Theorem \ref{thm:rigidity} realizes this deformation of $C$ by an integral $q$-gauge transformation due to Lemma \ref{lem:finite-resonance}. Combining it with
Corollary~\ref{cor:borel-normal} proves
Theorem~\ref{thm:integral-intro}.

\subsection*{Organization}
In \cref{sec:preliminaries} we give the basic definitions and recall the Nie--Zhou
normal form. In \cref{sec:type-A} we reduce the standard factor to a superbasic element in a Levi subgroup and prove the existence of a representative in a partial Coxeter cell. In Section~\ref{sec:promotion} we establish quasi-openness of a
$q$-twisted version of the Lang map and prove the existence of a $q$-oper structure on a $q$-connection lying in a partial Coxeter cell. In Section~\ref{sec:rigidity} we provide an alternative proof of the main theorem for integral $q$-connections.
\subsection{Acknowledgements}The authors thank Daniel S. Sage for proposing this problem and for several useful discussions. RK would like to thank Roman Fedorov and Bogdan Ion for helpful conversations about this problem. RK is partially supported by the NSF grant DMS-2402553. RS thanks Yongchang Zhu for helpful discussions. 
\section{Preliminaries}\label{sec:preliminaries}
\subsection{Conventions}\label{conventions}
Nie and Zhou \cite{NZ} use the $q$-conjugation convention:
\begin{equation}\label{eq:nzconvention}
A'\longmapsto x^{-1}A'\sigma_q(x).
\end{equation}
It is related to \eqref{eq:gauge-intro} by inversion: if $A'=A^{-1}$ and $x=g^{-1}$, then
\begin{equation}\label{eq:conventions}
  (A^g)^{-1}=x^{-1}A'\sigma_q(x).
\end{equation}
In other words, the $q$-conjugacy class of $A$ in the convention \eqref{eq:gauge-intro} corresponds under inversion to the $q$-conjugacy class of $A^{-1}$ in the convention \eqref{eq:nzconvention}.

Consider the semidirect product $G(F)\rtimes\mathbb{C}^\times$, where $z\in\mathbb{C}^\times$ acts on $G(F)$ through
the automorphism induced by $t\mapsto zt$. Then, under the identification $A'\mapsto A'\sigma_q$, the $q$-conjugacy class of $A'$ in the convention \eqref{eq:nzconvention} is identified with the ordinary $G(F)$-conjugacy class of $A'\sigma_q$ in $G(F)\rtimes\mathbb{C}^\times$.
\subsection{Affine Weyl groups}

Let $H\subset G$ be a connected reductive subgroup containing $T$.
Denote its finite Weyl group by $W_H$ and its coroot lattice by
$Q_H^\vee$.  The extended affine Weyl group is
\[
  \widetilde W_H=N_H(T)(F)/T(\mathcal O)
  \simeq X_*(T)\rtimes W_H,
\]
and its affine Weyl subgroup $W_H^a$ is $Q_H^\vee\rtimes W_H$. Fix the base alcove in the semisimple apartment and write
$\Omega_H\subset\widetilde W_H$ for its stabilizer. Then $W_H^a$ is a Coxeter group and the length function $\ell$ on $W_H^a$ is extended to $\widetilde W_H$ in the following way \cite[Section~1]{HN}:
\[
    \ell(w\delta)=\ell(w), \qquad w\in W_{H}^a, \delta\in\Omega_H.
\]
In particular, $\Omega_H$ is exactly the subgroup of $\widetilde W_H$ consisting of length zero elements.
Every $W_H^a$-coset in $\widetilde W_H$ 
has a unique length-zero representative, giving the standard identification
\begin{equation}\label{eq:omega-pi1}
  \Omega_H\xrightarrow{\sim}
  X_*(T)/Q_H^\vee=\pi_1^{\mathrm{alg}}(H),
\end{equation}
where $\pi_1^{\mathrm{alg}}(H)$ is the algebraic fundamental group of $H$.

Let
$
  N_{0,H}:=\bigl\langle N_H(T)(\mathbb{C}),\ t^\lambda:
  \lambda\in X_*(T)\bigr\rangle\subset N_H(T)(F).
$
The natural map $N_{0,H}\to\widetilde W_H$ is surjective and has kernel
$T(\mathbb{C})$. For $n\in N_H(T)(F)$, we write
\[
  p_H(n\sigma_q)=nT(\mathcal{O})\in\widetilde W_H.
\]
Fix the Iwahori subgroup determined by $B_-\cap H$, and
let $\mathcal{U}_H$ denote its pro-unipotent radical. Following \cite{NZ}, an element
of $H(F)\sigma_q$ is called \emph{standard} if it is
$H(F)$-conjugate to an element of $N_{0,H}\sigma_q$. An element
$n\sigma_q\in N_{0,H}\sigma_q$ is called \emph{straight}
if $\tau=p_H(n\sigma_q)$ satisfies
$\ell(\tau^m)=m\ell(\tau)$, for every $m\geq1$.

For $x\in\widetilde W_H$, there exists $n\in\mathbb{N}$ such that $x^{n}=t^\lambda$ for some $\lambda\in X_{*}(T)$. The \emph{Newton
point} of $x$ is defined as (\cite[Section~2]{HN}):
\[
\nu_x=\frac{\lambda}{n}.
\]
Note that the definition of the Newton point does not depend on the choice of $n$.
\subsection{Coxeter cells}
We write $B_\pm=TU_\pm$. Let us choose a pinning on the triple $(G,T,B_{+})$, that is, for each simple root $\alpha_i$, choose root
homomorphisms
\[
  x_i:\mathbb{G}_a\longrightarrow U_{\alpha_i},\qquad
  y_i:\mathbb{G}_a\longrightarrow U_{-\alpha_i},
\]
and a compatible homomorphism $\varphi_i:\mathrm{SL}_2\to G$. Thus,
\[
  x_i(z)=\varphi_i\!\begin{pmatrix}1&z\\0&1\end{pmatrix},\qquad
  y_i(z)=\varphi_i\!\begin{pmatrix}1&0\\z&1\end{pmatrix},
\]
\[
  \alpha_i^\vee(z)=
  \varphi_i\!\begin{pmatrix}z&0\\0&z^{-1}\end{pmatrix},\qquad
  \dot s_i=\varphi_i\!\begin{pmatrix}0&1\\-1&0\end{pmatrix}.
\]
Since $s_{1}\cdots s_{r}$ is a reduced expression, we have the following.
\begin{equation}\label{eq:bruhat-product}
  (B_-\dot s_{1}B_-)\cdots(B_-\dot s_{r}B_-)
  =B_-\dot cB_-.
\end{equation}
\begin{lemma}\label{lem:coxeter-factor}
For $N\geq1$, set
$
  v_N=x_{1}(t^N)\cdots x_{r}(t^N).
$
Then $v_N\equiv1\pmod{t^N}$ and
$
  v_N\in B_-(F)\dot cB_-(F).
$
\end{lemma}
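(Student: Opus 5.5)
The congruence $v_N\equiv1\pmod{t^N}$ is immediate. Each $x_i(t^N)$ is the image under $\varphi_i$ of a unipotent matrix whose off-diagonal entry is $t^N$. In any faithful representation it is therefore $1+t^N(\text{integral})$, and so is the product of such factors. The real content is the Bruhat statement, and the plan is to reduce it to a rank-one computation followed by \eqref{eq:bruhat-product} over the field $F$.

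\textbf{Rank-one step.} We show that $x_i(z)\in B_-(F)\dot s_iB_-(F)$ for every nonzero $z\in F$; in particular this holds for $z=t^N$. Since $\varphi_i(B_-^{\mathrm{SL}_2})\subset B_-$, it suffices to work inside $\mathrm{SL}_2(F)$ and find lower triangular $b,b'$ with
\[
\begin{pmatrix}1&z\\0&1\end{pmatrix}=b\begin{pmatrix}0&1\\-1&0\end{pmatrix}b'.
\]
An explicit factorization does this:
\[
\begin{pmatrix}1&z\\0&1\end{pmatrix}
=\begin{pmatrix}1&0\\z^{-1}&1\end{pmatrix}
\begin{pmatrix}z&0\\0&z^{-1}\end{pmatrix}
\begin{pmatrix}0&1\\-1&0\end{pmatrix}
\begin{pmatrix}1&0\\z^{-1}&1\end{pmatrix}.
\]
This is the standard identity $x(z)=y(z^{-1})\,\alpha^\vee(z)\,\dot s\,y(z^{-1})$, up to the sign conventions in $\dot s$. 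Applying $\varphi_i$ gives
\[
x_i(z)=y_i(z^{-1})\,\alpha_i^\vee(z)\,\dot s_i\,y_i(z^{-1})\in B_-(F)\dot s_iB_-(F).
\]

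\textbf{Multiplicativity step.} Next we use \eqref{eq:bruhat-product}, read over $F$:
\[
(B_-(F)\dot s_1B_-(F))\cdots(B_-(F)\dot s_rB_-(F))=B_-(F)\dot cB_-(F).
\]
This is the Bruhat decomposition for the split reductive group $G$ over the field $F$, applied with respect to the Borel subgroup $B_-$. Since $s_1\cdots s_r$ is reduced, the cell products multiply without cancellation: $BwB\cdot Bs B=BwsB$ whenever $\ell(ws)>\ell(w)$, and length is the same relative to $B_-$ or $B_+$. Multiplying the $r$ rank-one memberships then places $v_N$ in $B_-(F)\dot cB_-(F)$.

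The only point that needs care, which I expect to be the main (mild) obstacle, is that \eqref{eq:bruhat-product} is a statement about $F$-points of the group scheme. It is not automatic for groups over $\mathbb{C}$ alone. The justification is that $G$ is split over $F$ with the same pinning, so the Tits system axioms hold for $(G(F),B_-(F),N_G(T)(F))$. One must also check that the sign convention for $\dot s_i$ is compatible with the factorization above; any discrepancy is absorbed by an element of $T(F)\subset B_-(F)$.
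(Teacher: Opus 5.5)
Your proof is correct and follows the paper's argument: first show $x_i(z)\in B_-(F)\dot s_iB_-(F)$ via an explicit $\mathrm{SL}_2$ factorization, then apply the reduced-word product formula \eqref{eq:bruhat-product} over $F$. Your $\mathrm{SL}_2$ identity is \eqref{eq:rank-one} with the central factor absorbed, since $\alpha_i^\vee(-1)\alpha_i^\vee(-z)=\alpha_i^\vee(z)$. It holds exactly for the paper's choice of $\dot s_i$, so no sign correction is needed.
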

\begin{proof}
For $z\in F^\times$, the rank-one identity
\begin{equation}\label{eq:rank-one}
x_i(z)=\alpha_i^\vee(-1)\,y_i(z^{-1})\,\alpha_i^\vee(-z)\,\dot s_i\,y_i(z^{-1})
\end{equation}
follows by applying $\varphi_i$ to
\[
\begin{pmatrix}1&z\\0&1\end{pmatrix}
=
\begin{pmatrix}-1&0\\0&-1\end{pmatrix}
\begin{pmatrix}1&0\\z^{-1}&1\end{pmatrix}
\begin{pmatrix}-z&0\\0&-z^{-1}\end{pmatrix}
\begin{pmatrix}0&1\\-1&0\end{pmatrix}
\begin{pmatrix}1&0\\z^{-1}&1\end{pmatrix}.
\]
Thus $x_i(z)\in B_-(F)\dot s_iB_-(F)$. Clearly, $v_N\equiv1\pmod{t^N}$ and the assertion $v_N\in B_-(F)\dot cB_-(F)$ follows by taking $z=t^N$ and using \eqref{eq:bruhat-product}. 
\end{proof}
\begin{notation}\label{cJ}
For $J\subset\Delta$, we let $c_J$ denote the subword of
$c=s_{1}\cdots s_{r}$ obtained by retaining (without changing the fixed order on the set of simple roots) exactly the factors whose simple roots belong to $J$. It is a Coxeter element of the Weyl group of the standard Levi $M_J$. We denote the corresponding lift of $c_{J}$ by $\dot{c}_J$. For $J=\varnothing$, we set $c_J=1$. We call $B_{-}(F)\dot{c}_JB_{-}(F)$ the partial Coxeter cell associated with $J$.
\end{notation}
\subsection{A normal form}
It is known (\cite[Theorem~1.1]{NZ}) that every element of $G(F)\sigma_q$ has a unique factorization as a product of commuting
standard and pro-unipotent factors, and that every
$G(F)$-conjugacy class in $G(F)\sigma_q$ contains a distinguished representative $sh=hs$, with $s$ straight and $h$ pro-unipotent.
 
The next lemma follows from the proof of \cite[Proposition~3.1]{NZ}.
\begin{lemma}\label{lem:NZ-newton-levi}
Let $H$ be a connected reductive complex algebraic group with a maximal torus $T$, and
let $\mathcal C$ be an $H(F)$-conjugacy class in
$H(F)\sigma_q$. Then there exists a Levi subgroup $M\subset H$ that contains $T$ and a representative $sh=hs\in\mathcal C$ such that
\begin{equation}\label{eq:NZ-data}
  s=n\sigma_q,\qquad n\in N_{0,M},\qquad
  p_M(s)\in\Omega_M,\qquad h\in \mathcal{U}_M,
\end{equation}
where $\mathcal{U}_M$ is the pro-unipotent radical of the Iwahori subgroup of $M(F)$ induced by the chosen alcove. In particular, $s$ is straight in $M(F)\sigma_q$.
\end{lemma}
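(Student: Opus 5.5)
We would extract the statement from the structure of the proof of \cite[Proposition~3.1]{NZ}, combined with the general theory of straight elements in \cite{HN}. The first step is to apply \cite[Theorem~1.1]{NZ} to $H$. This gives a representative $s'h'=h's'\in\mathcal C$ with $s'=n'\sigma_q$ straight, $n'\in N_{0,H}$, and $h'$ pro-unipotent. Put $\tau=p_H(s')\in\widetilde W_H$. Then choose $m\ge1$ with $\tau^m=t^{\lambda}$, so that $\nu_\tau=\lambda/m$.

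The second step is to choose the Levi subgroup $M$. Let $M$ be the centralizer in $H$ of the Newton cocharacter $\nu_\tau$; concretely, $M$ is the Levi generated by $T$ and the root subgroups $U_\alpha$ with $\langle\alpha,\nu_\tau\rangle=0$. After conjugating by an element of $N_{0,H}$, which preserves straightness, we may assume that $\nu_\tau$ is dominant for the chosen alcove and that $\tau$ is in the normalized form of \cite{HN}. By \cite{HN}, a straight element with Newton point $\nu$ is conjugate, by a length-preserving cyclic-shift sequence, to an element $\tau\in\widetilde W_M$ that has length zero in $\widetilde W_M$. This is where we use $\ell(\tau)=\langle\nu_\tau,2\rho\rangle$ and the equality $\ell(\tau^m)=m\ell(\tau)$: the affine root hyperplanes crossed by $\tau$ are exactly those with $\langle\alpha,\nu_\tau\rangle\neq0$. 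Since $\tau$ centralizes $\nu_\tau$, we can lift it to $n\in N_{0,M}$ and obtain $p_M(s)\in\Omega_M$, where $s=n\sigma_q$. Straightness of $s$ in $M(F)\sigma_q$ is then automatic, because length zero elements satisfy $\ell(\tau^k)=0=k\ell(\tau)$.

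The third step, which we expect to be the main obstacle, is to move the pro-unipotent part into $\mathcal U_M$. Since $h$ commutes with $s$, it also commutes with $s^m=\lambda(t)\cdot z$, where $z$ lies in $T(\mathbb C)\sigma_q^m$. Decompose $h\in\mathcal U_H$ according to the Iwahori decomposition into $\mathcal U_H\cap M(F)$ and the root subgroups $U_\alpha(F)$ with $\langle\alpha,\nu\rangle\ne0$. Conjugation by $s^m$ scales $U_\alpha(F)$ by $t^{m\langle\alpha,\nu\rangle}$, and $q$ is not a root of unity. Iterating the identity $h=s^{mk}hs^{-mk}$ then forces the components with $\langle\alpha,\nu\rangle\neq0$ to be trivial. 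If they are not directly trivial, we would instead argue as NZ do: the centralizer of $s$ in $H(F)$ is contained in $M(F)$ up to the valuation argument, and a pro-unipotent element of that centralizer can be conjugated by an element of $M(F)\cap\mathrm{Cent}(s)$ into $\mathcal U_M$. The difficulty here is controlling the positive-valuation root components uniformly. We would handle it by the contraction estimate $\mathrm{val}(\Ad(s^{mk})X_\alpha)=\mathrm{val}(X_\alpha)+mk\langle\alpha,\nu\rangle$ together with $t$-adic convergence. Once this is done, the conditions $s=n\sigma_q$, $n\in N_{0,M}$, $p_M(s)\in\Omega_M$ and $h\in\mathcal U_M$ all hold, which gives \eqref{eq:NZ-data}.
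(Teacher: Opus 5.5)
Your Steps 1 and 2 follow roughly the same \cite{NZ}/\cite{HN} input that the paper extracts. One small correction there: conjugation by an arbitrary element of $N_{0,H}$ does not preserve length, so it does not preserve straightness; only the length-preserving cyclic shifts you mention afterwards do.

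The genuine gap is Step~3. You write ``decompose $h\in\mathcal U_H$'', but nothing places $h$ in the pro-unipotent radical of the \emph{standard} Iwahori. \cite[Theorem~1.1]{NZ} only makes $h$ pro-unipotent, which is a conjugation-invariant property. In any case, the Step~2 conjugations by translations and Weyl lifts do not preserve $\mathcal U_H$.

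More seriously, no valuation argument can yield $h\in\mathcal U_M$. Take $H=M=GL_2$, $s=\sigma_q$ (so $\tau=1\in\Omega_M$ and $\nu=0$), and $h=\begin{pmatrix}1&1\\0&1\end{pmatrix}$. Then $sh=hs$ and $h$ is pro-unipotent, and your contraction estimate is vacuous. Yet $h\notin\mathcal U_M$, because its reduction mod $t$ is not in $U_-$. To reach the lemma's conclusion you must still conjugate by an element of the centralizer of $s$: here $\dot s_1\in GL_2(\mathbb C)$ sends $h$ to $\begin{pmatrix}1&0\\-1&1\end{pmatrix}\in\mathcal U_M$.

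What your valuation idea does give, once made rigorous, is $Z_{H(F)}(s)\subset M(F)$. Write $s^m=t^\lambda a\sigma_q^m$ with $\lambda=m\nu$ and $a\in T(\mathbb C)$. In a faithful representation, the blocks of $h$ between distinct $\lambda$-weight spaces satisfy a $q$-difference equation with a nonzero $t$-shift, so they vanish for Laurent series. This needs no assumption that $h\in\mathcal U_H$.

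The missing step is the second half: conjugating $h$ within $Z_s=M(F)^s$ into $\mathcal U_M$ while fixing $s$. Your fallback sentence asserts this, but it is precisely the substantive point. It needs \cite[\S4]{NZ}: $Z_s$ is reductive, and its pro-unipotent elements are unipotent. Hence they lie in $Z_s^\circ$ and are $Z_s^\circ$-conjugate into the unipotent radical of a Borel subgroup such as $(B_m^\theta)^\circ$. That radical is generated by positive affine root subgroups and so lies in $\mathcal U_M$. You would also need to check that pro-unipotence in $H(F)$ passes to $M(F)$ for elements of $M(F)$.

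The paper avoids this issue by ordering the steps differently. It first uses the \cite{NZ} argument to get $p_H(s)\in\Omega_M$ and $\mathcal C\cap s\mathcal U_M\neq\varnothing$, with no commutation required. It then runs NZ's recursive construction entirely inside $\mathcal U_M$, producing $g,h\in\mathcal U_M$ with $gfg^{-1}=hs=sh$. In that route $h\in\mathcal U_M$ is built in from the start.
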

\begin{proof}
By the proof of \cite[Proposition~3.1]{NZ}, there exist a straight
element $x\in\widetilde W_H$ and an element $s_1\in N_{0,H}\sigma_q,\;
p_H(s_1)=x,$
such that with $  M_x=Z_H(\nu_x),$ one has
\begin{equation}\label{eq:NZ-Levi-intersection}
  \mathcal C\cap s_1\mathcal U_{M_x}\neq\varnothing.
\end{equation}

Let $\bar\nu_x$ be the dominant element in the $W_H$-orbit of
$\nu_x$. Choose $z\in N_H(T)(\mathbb C)$ as in the paragraph
immediately following assertion~(ii) in the proof of
\cite[Proposition~3.1]{NZ}; in particular, the Weyl-group image
of $z$ sends $\nu_x$ to $\bar\nu_x$. Set
\[
  M=Z_H(\bar\nu_x)=zM_xz^{-1},
  \qquad
  s=zs_1z^{-1}.
\]
The cited paragraph gives
\begin{equation}\label{eq:NZ-lengthzero}
p_H(s)\in\Omega_M\subset\widetilde W_M\subset\widetilde W_H,
  \qquad
  \mathcal C\cap s\mathcal{U}_M\neq\varnothing.
\end{equation}
Write $ s_1=n_1\sigma_q, \;  s=n\sigma_q.$ Since $z$ is constant,  $n=zn_1z^{-1}.$
Hence $n\in N_{0,H}$. Moreover,
\[
  p_H(s)=nT(\mathcal O)\in\widetilde W_M.
\]
Choose $n_M\in N_{0,M}$ having the same image as $n$ in
$\widetilde W_H$. Then $nn_M^{-1}\in T(\mathbb C)$. Thus $ n\in N_{0,M}$ and $s$ is straight in $M(F)\sigma_q$.

Under the natural inclusion
$\widetilde W_M\subset\widetilde W_H$, we therefore have $p_M(s)=p_H(s)\in\Omega_M.$

Using \eqref{eq:NZ-lengthzero}, choose
\[
  f=su\in\mathcal C,
  \qquad
  u\in\mathcal U_M.
\]
The final recursive construction in the proof of \cite[Proposition~3.1]{NZ} produces convergent sequences
$g_i,h_i,u_i\in\mathcal U_M$
such that
\[
  h_is=sh_i,
  \qquad
  g_ifg_i^{-1}=h_isu_i,
  \qquad
  u_i\longrightarrow1.
\]
Let $g=\lim_{i\to\infty}g_i,  h=\lim_{i\to\infty}h_i.$
Then $g,h\in\mathcal U_M$ and
\[
  gfg^{-1}=hs=sh.
\]
Since $f\in\mathcal C$, it follows that
\[
  sh=hs\in\mathcal C,
  \qquad
  h\in\mathcal U_M.
\]
\end{proof}
\begin{proof}[Proof of \cref{thm:indecomp-intro}]
Let $A\in G(F)$ be indecomposable and put $C=A^{-1}$.  
Applying \cref{lem:NZ-newton-levi} to the conjugacy class of $C\sigma_q$ gives a Levi subgroup $M$ and a representative $sh=hs$ satisfying \eqref{eq:NZ-data}. Hence, by \eqref{eq:conventions}, the $q$-conjugacy class of $A$ also has a representative in $M(F)$. Since $A$ is indecomposable, it follows that $M=G$.

Now, since $G$ is semisimple and simply connected, $\Omega_G=1$ and thus $p_G(s)=1$. Since the kernel of $N_{0,G}\to\widetilde W_G$ is $T(\mathbb{C})$, one has
$s=a\sigma_q\;\text{for some }a\in T(\mathbb{C})$.
Moreover, $h\in\mathcal U_G\subset G(\mathcal O)$. Thus, $sh\in G(\mathcal{O})\sigma_q$ and hence the conjugacy class of $A$ is integral by \eqref{eq:conventions}. 
\end{proof}
\section{Reduction to Levi subgroups}
\label{sec:type-A}
In this section, we show that every $q$-conjugacy class has a representative in a partial Coxeter cell by reducing the standard factor to a superbasic element in a Levi subgroup.
\subsection{Reduction to parabolic subgroups}

Let $M$ be a connected reductive group with maximal torus $T$, let
\[
  s=n\sigma_q,\qquad n\in N_{0,M},\qquad p_M(s)\in\Omega_M,
\]
and set
\[
  Z_s=M(F)^s
  =\{g\in M(F):sg=gs\}.
\]
In other words, $Z_s$ is the stabilizer of $s$ in $M(F)$ under conjugation.
Under these hypotheses, \cite[Lemma~4.1]{NZ} shows that $Z_s$ is a possibly
disconnected reductive algebraic group over $\mathbb C$.

The following lemma allows the associated cocharacter in Proposition~\ref{prop:parabolic} to be conjugated into $T$.
\begin{lemma}\label{lem:constant-maximal-torus}
The connected reductive group $Z_s^\circ$ has a maximal torus contained in $T$.
\end{lemma}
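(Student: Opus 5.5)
We have $s=n\sigma_q$ with $n\in N_{0,M}$, and $T(\mathbb C)\subset M(F)$ is a torus of constant elements. The plan is to show that a suitable subtorus of $T(\mathbb C)$, namely $(T(\mathbb C)^s)^\circ$, lies in $Z_s$ and is a maximal torus of $Z_s^\circ$.

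\textbf{Step 1: compute the centralizer of $s$ in $T(\mathbb C)$.} For $a\in T(\mathbb C)$ we have $\sigma_q(a)=a$, so $s a s^{-1}=n a n^{-1}=w(a)$. Here $w\in W_M$ is the finite part of $p_M(s)=t^\lambda w$, and $t^\lambda$ commutes with $T$. Hence
\[
T(\mathbb C)\cap Z_s=T^{w},
\]
and its identity component $S:=(T^w)^\circ$ is a torus contained in $Z_s^\circ$.

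\textbf{Step 2: show $S$ is a maximal torus of $Z_s^\circ$.} Let $S'\supset S$ be a maximal torus of $Z_s^\circ$. Then $S'$ centralizes $S$, so $S'\subset Z_{M(F)}(S)$. Since $S$ is a constant torus, this centralizer is $L(F)$, where $L=Z_M(S)$ is a Levi subgroup of $M$ containing $T$.

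It therefore suffices to show that $Z_s\cap L(F)$ has maximal torus $S$. Note that $s$ normalizes $L(F)$, because $n$ normalizes $S$, and $s$ acts on $L(F)$ as an automorphism of the same kind. By the choice of $S$, the restriction of $w$ to the torus $T/Z(L)^\circ$ has no fixed cocharacters, so the relevant element is elliptic for $L$. I would then argue that the centralizer of an elliptic (basic) length-zero $\sigma_q$-twisted element has maximal torus equal to $Z(L)^{\circ}$-type constants. Concretely, $Z(L)^\circ\subset S$, and $Z(L)^\circ$ is fixed by $w$ up to the relevant component.

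\textbf{Main obstacle and fallback via the Lie algebra.} The hard step is the comparison of ranks: showing $\operatorname{rank} Z_s^\circ=\dim T^w$. I would do this at the Lie algebra level, using the explicit description underlying \cite[Lemma~4.1]{NZ}. The Lie algebra of $Z_s$ is the fixed space of $\Ad(n)\circ\sigma_q$ on $\mathfrak m(F)$. Decompose $\mathfrak m(F)=\mathfrak t(F)\oplus\bigoplus_\alpha \mathfrak m_\alpha(F)$. Since $q$ is not a root of unity, the fixed points in $\mathfrak t(F)$ are exactly $\mathfrak t^w$ in degree zero. Indeed, an orbit of $t^k\mathfrak t$ under $\Ad(n)\sigma_q$ scales by $q^{k}$ up to a finite-order $w$, and a fixed vector forces $q^{km}=1$, hence $k=0$.

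The fixed part of $\mathfrak m(F)$ is then a finite-dimensional reductive Lie algebra $\mathfrak z$, graded by $X^*(T)$ through $\Ad(T^w)$, with zero-weight space for $S$ containing $\mathfrak t^w$. It remains to check that the zero-weight space of $S$ in $\mathfrak z$ is exactly $\mathfrak t^w$. An $S$-invariant fixed vector lies in $\mathfrak l(F)$. On the root part of $\mathfrak l(F)$, the ellipticity of $w$ on $L$ together with the length-zero condition should kill all fixed vectors: a $w$-orbit of affine roots $\alpha+k$ returning to itself forces $q$-power scaling with a nonzero exponent, since $p_M(s)$ of length zero permutes affine roots without fixing any direction. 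Hence $\operatorname{Lie}(Z_{Z_s^\circ}(S))=\mathfrak t^w=\operatorname{Lie} S$, so $S$ is a maximal torus, and it is contained in $T$.
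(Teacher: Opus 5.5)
You identify the same torus as the paper, $S=(T^w)^\circ=(T(\mathbb C)^\theta)^\circ$, but prove its maximality by a different route. The paper passes to $H_m=(M(F)^{s^m})^\circ$. There $\theta=\Ad(s)|_{H_m}$ has finite order, and since $\tau\in\Omega_M$ preserves positive affine roots, $\theta$ stabilizes the pair $T(\mathbb C)\subset B_m$. Digne--Michel for quasi-semisimple automorphisms then finishes with no computation.

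You instead prove $Z_{Z_s^\circ}(S)=S$ at the Lie algebra level. Your reductions are sound:
\begin{itemize}
\item $n$ centralizes $S$, so $n\in L(F)$ and $w\in W_L$;
\item $S=Z(L)^\circ$, so $w$ is elliptic in $W_L$;
\item $Z_{Z_s^\circ}(S)$ is connected with Lie algebra $\mathfrak l(F)^{\Ad(s)}$;
\item $\mathfrak t(F)^{\Ad(s)}=\mathfrak t^w$ is correctly computed.
\end{itemize}
This avoids Digne--Michel, but everything then rests on the absence of $\Ad(s)$-fixed vectors in the root part of $\mathfrak l(F)$. You only sketch this (``should kill''), and the sketch is missing an ingredient.

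Write $n=t^\lambda\dot w'$ with $\dot w'\in N_M(T)(\mathbb C)$ and $\Ad(\dot w')X_\alpha=c_\alpha X_{w\alpha}$. Suppose $t^{k_0}\mathfrak l_{\alpha_0},\dots,t^{k_{d-1}}\mathfrak l_{\alpha_{d-1}}$ is an $\Ad(s)$-orbit of affine root lines, coming from a $\tau$-orbit $\beta_0,\dots,\beta_{d-1}$ of affine roots. Then $\Ad(s)^d$ acts on the first line by $q^{N}C$, where $N=k_0+\cdots+k_{d-1}$ and $C=\prod_j c_{\alpha_j}$. This is not a pure power of $q$: $C$ depends on the $T(\mathbb C)$-component of $n$, and in general that is exactly what produces fixed vectors. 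For example, for $n=\operatorname{diag}(q,1)\in GL_2$ the vector $t^{-1}E_{12}$ is fixed. So you must show $q^NC\neq 1$, which needs two facts you do not supply.
\begin{enumerate}
\item[(i)] Write $\dot w'=\dot w_0a_0$ with $\dot w_0$ a Tits lift and $a_0\in T(\mathbb C)$. Then $C=\pm\bigl(\sum_j\alpha_j\bigr)(a_0)$. Since $w^d\alpha_0=\alpha_0$, the element $\sum_j\alpha_j=\sum_j w^j\alpha_0$ is $w$-fixed in the root span of $L$, hence $0$ by ellipticity. So $C=\pm1$.
\item[(ii)] $N\neq 0$. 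By the same ellipticity, the $\tau$-invariant affine function $\sum_j\beta_j$ has zero linear part, so it is the constant $\pm N$. Because $\tau\in\Omega_M$ sends positive affine roots to positive affine roots, all $\beta_j$ have the same sign on the base alcove, so $N\neq0$.
\end{enumerate}
Your stated reason, that a length-zero element ``permutes affine roots without fixing any direction,'' is not correct: $\tau=1$ has length zero and fixes every affine root. It is ellipticity together with positivity that does the work.

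With (i) and (ii), $q^NC=\pm q^N\neq 1$ because $q$ is not a root of unity, and your argument closes. The middle paragraph about ``$Z(L)^\circ$-type constants'' adds nothing and can be dropped.
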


\begin{proof}
The following proof extracts the relevant arguments from \cite[Lemma~4.1]{NZ}.
Let $\tau=p_M(s)\in\Omega_M$.  Choose $m>0$ such that the finite Weyl
component of $\tau^m$ is trivial.  Then $\tau^m=t^\eta$ for some
$\eta\in X_*(T)$.  Since $\tau^m$ is a translation and $\ell(\tau^m)=0$, we must have 
$\langle\alpha,\eta\rangle=0$ for every root $\alpha$ of $M$. Moreover, since
$n\in N_{0,M}$, we can write
\[
  s^m=t^\eta a\,\sigma_q^m
  \qquad\text{for some }a\in T(\mathbb C).
\]
Put
\[
  H_m=\bigl(M(F)^{s^m}\bigr)^\circ.
\]
According to the proof of \cite[Lemma~4.1]{NZ}, $H_m$ is the
connected reductive group generated by $T(\mathbb C)$ and the finitely many
affine root subgroups on which conjugation by $s^m$ is trivial. Therefore, the fixed positive affine root subgroups together with the maximal torus $T(\mathbb C)$ generate a Borel subgroup
$B_m$ of $H_m$.

Now, conjugation by $s$ induces an algebraic automorphism $\theta$ of $H_m$.
By definition of $H_m$, we have $\theta^m=1$.  Moreover,
$\tau\in\Omega_M$ preserves the set of positive affine roots, so $\theta$
stabilizes the pair $T(\mathbb C)\subset B_m$.  Thus, $\theta$ is
quasi-semisimple. By \cite[Proposition~2.1]{DM}, the fixed-point group
$H_m^\theta$ is reductive and
$\bigl(T(\mathbb C)^\theta\bigr)^\circ$
is a maximal torus of $(H_m^\theta)^\circ=Z_s^\circ$.
\end{proof}
\begin{proposition}\label{prop:parabolic}
Let $h\in Z_s$ be unipotent. Then, after conjugating the pair $(s,h)$ by an
element of $Z_s^\circ$, there exists a parabolic subgroup $P=M_1U\subset M$
with $T\subset M_1$ such that
\[
  n\in N_{M_1}(T)(F),
  \qquad
  h\in U(F).
\]
\end{proposition}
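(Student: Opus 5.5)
The plan is to use the Jacobson--Morozov / Kempf associated cocharacter for the unipotent element $h$ of the reductive group $Z_s$, conjugate it into $T$, and take $P$ to be the parabolic subgroup of $M$ it defines.

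\emph{Step 1: choose the cocharacter.} Since $h\in Z_s$ is unipotent, it lies in $Z_s^\circ$. By Jacobson--Morozov (or Kempf--Rousseau optimal cocharacters) in the connected reductive group $Z_s^\circ$, there is a cocharacter $\lambda:\mathbb{G}_m\to Z_s^\circ$ such that
\[
  \lim_{z\to0}\lambda(z)h\lambda(z)^{-1}=1,
\]
that is, $h$ lies in the unipotent radical of the parabolic $P_{Z_s^\circ}(\lambda)$; one may take $\lambda$ associated to an $\mathfrak{sl}_2$-triple through $\log h$. Every cocharacter of $Z_s^\circ$ lies in some maximal torus, and all maximal tori are conjugate. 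By \cref{lem:constant-maximal-torus} there is a maximal torus $S\subset T$, so after conjugating by some $y\in Z_s^\circ$ we may assume $\lambda$ has image in $S\subset T(\mathbb C)$. Replacing $(s,h)$ by $(ysy^{-1},yhy^{-1})=(s,yhy^{-1})$ keeps $s$ fixed, since $y$ centralizes $s$. Thus $n$ is unchanged, and $h$ still satisfies the limit condition for the conjugated $\lambda$.

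\emph{Step 2: define $P$ and check $h$.} Since $\lambda\in X_*(T)$, set $P=P_M(\lambda)=M_1U$ with $M_1=Z_M(\lambda)\supset T$ and $U=U_M(\lambda)$. The cocharacter $\lambda$ is constant, i.e.\ it has coefficients in $\mathbb C$, so the dynamic description of $P(\lambda)$ and $U(\lambda)$ is compatible with base change to $F$ and to $F[[z]]$-points. The condition that $\lambda(z)h\lambda(z)^{-1}$ extends to $z=0$ with value $1$ gives $h\in U(F)$. To see this concretely, decompose $h$ using root subgroups $U_\alpha(F)$, $T(F)$, and the big cell; $h\in U(F)$ means exactly that only roots with $\langle\alpha,\lambda\rangle>0$ appear.

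\emph{Step 3: $n\in N_{M_1}(T)(F)$.} Because $\lambda(z)\in Z_s$ for all $z$, we have $s\lambda(z)s^{-1}=\lambda(z)$. Since $\lambda(z)$ is constant, $\sigma_q$ fixes it, so this becomes $n\lambda(z)n^{-1}=\lambda(z)$, i.e.\ $n\in Z_{M(F)}(\lambda)=M_1(F)$. As $n$ already normalizes $T$, we get $n\in N_{M_1}(T)(F)$.

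\emph{Main obstacle.} The delicate point is Step 1 in the setting where $Z_s$ is only an abstract complex reductive group embedded in $M(F)$, rather than a subgroup of $M$. Two things must be checked. First, a unipotent element of $Z_s$ is contracted by its Jacobson--Morozov cocharacter in the algebraic structure of $Z_s$; this must be compatible with the ind-structure of $M(F)$, which I would verify using the explicit description of $Z_s^\circ$ from the proof of \cref{lem:constant-maximal-torus} as generated by $T(\mathbb C)^\theta$ and affine root subgroups. Second, $h$ must lie in $Z_s^\circ$. For this I would argue that a unipotent element generates a connected subgroup $\exp(\mathbb C\log h)$.
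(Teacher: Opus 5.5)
Your proposal is correct and follows the paper's proof essentially step for step: the paper also uses $e=\log h$ to get $h\in Z_s^\circ$, takes an associated cocharacter $\lambda$ of $Z_s^\circ$ with $\operatorname{Ad}(\lambda(z))e=z^2e$, conjugates $\lambda$ into $T$ via \cref{lem:constant-maximal-torus} (which fixes $s$), and uses $\sigma_q(\lambda(z))=\lambda(z)$ to get $n\in Z_M(\lambda)(F)$. The paper simply asserts $h\in U(F)$. If you keep the relation $\operatorname{Ad}(\lambda(z))e=z^2e$ rather than only the group-level limit, the compatibility issue you flag disappears: inside $\mathfrak m(F)$ this relation forces $e$ into the root spaces with $\langle\alpha,\lambda\rangle=2$, hence $h=\exp(e)\in U(F)$.
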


\begin{proof}
If $h=1$, take $M_1=M$ and $U=\{1\}$.  Assume now $h\neq 1$. Put
$e=\log(h)$. Since $h$ commutes with $s$,
the one-parameter subgroup
$\{\exp(ze):z\in\mathbb C\}$ is contained in $Z_s$. It follows that
$h\in Z_s^\circ$. 
Since $\text{char}(\mathbb{C})$ is good for $Z_s^{\circ}$, there exists a cocharacter associated to $e$, that is, $ \lambda:\mathbb G_m\longrightarrow Z_s^\circ$ such that $ \operatorname{Ad}(\lambda(z))e=z^2e$ (see \cite[Section~5]{Jantzen}). The equivariance property of the exponential map then gives 
\[
  \lambda(z)h\lambda(z)^{-1}
  =
  \exp\left(\operatorname{Ad}(\lambda(z))e\right)
  =
  \exp(z^2e)
  \longrightarrow 1
  \qquad \text{as }z\longrightarrow 0.
\]
Thus, by the dynamical description of parabolic subgroups (see \cite[Section~6]{Con}), we have $ h\in R_u\bigl(P_{Z_s^\circ}(\lambda)\bigr)$.

By \cref{lem:constant-maximal-torus} and the conjugacy of maximal tori in
$Z_s^\circ$, we can further conjugate by an element of $Z_s^\circ$ and assume that
$\lambda\in X_*(T)$.  In particular, $s$ is fixed under this conjugation. Since
$\lambda(z)\in Z_s$, we have
\[
  n\sigma_q(\lambda(z))n^{-1}=\lambda(z).
\]
Since $\lambda$ is a cocharacter of $T$, $\sigma_q(\lambda(z))=\lambda(z)$ and therefore 
$n\in Z_M(\lambda)(F)$.  Set 
\[
M_1=Z_M(\lambda),
\qquad
P=P_M(\lambda)=M_1U.
\]
Since $n$ normalizes $T$, it belongs to
$N_{M_1}(T)(F)$.
It is clear that $h\in U(F)$.
\end{proof}
The next lemma shows that the element $n\sigma_q$ obtained in the above proposition is standard in $M_{1}(F)\sigma_q$. 
\begin{lemma}\label{lem:inherit-standard}
Let $M_1\subset M$ be a Levi subgroup containing $T$.  If
\[
  n\in N_{0,M}\cap N_{M_1}(T)(F),
\]
then $n\in N_{0,M_1}$.  In particular, $n\sigma_q$ is standard as an
element of $M_1(F)\sigma_q$.
\end{lemma}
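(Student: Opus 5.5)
The plan is to use the structure of $N_{0,M}$: it is generated by $N_M(T)(\mathbb C)$ and the elements $t^\lambda$, and its image in $\widetilde W_M$ is everything, with kernel $T(\mathbb C)$. Since every $t^\lambda$, $\lambda\in X_*(T)$, already lies in $N_{0,M_1}$ (same torus $T$), it suffices to handle the constant part. First I will show that every element of $N_{0,M}$ can be written as $t^\lambda m$ with $\lambda\in X_*(T)$ and $m\in N_M(T)(\mathbb C)$. This holds because $N_M(T)(\mathbb C)$ normalizes the group $\{t^\mu\}$: for $w\in N_M(T)(\mathbb C)$ we have $w t^\mu w^{-1}=t^{w(\mu)}$. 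Hence $N_{0,M}=t^{X_*(T)}\cdot N_M(T)(\mathbb C)$.

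Next, given $n=t^\lambda m$ with $n\in N_{M_1}(T)(F)$, consider the image of $n$ in the finite Weyl group $W_M$. This image is obtained from the map $N_M(T)(F)\to N_M(T)(F)/T(F)=W_M$. The factor $t^\lambda\in T(F)$ maps trivially, so the image of $n$ equals the image $\bar m$ of $m$. On the other hand, $n\in N_{M_1}(T)(F)$, so its image lies in $W_{M_1}\subset W_M$; here I use that $N_{M_1}(T)(F)/T(F)=W_{M_1}$, since the Weyl group is a finite constant group scheme. Thus $\bar m\in W_{M_1}$.

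Choose $m_1\in N_{M_1}(T)(\mathbb C)$ lifting $\bar m$. Then $m=m_1 a$ with $a\in T(\mathbb C)$, so $m\in N_{M_1}(T)(\mathbb C)$. Consequently $n=t^\lambda m\in N_{0,M_1}$.

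For the "in particular", an element of $N_{0,M_1}\sigma_q$ is by definition standard in $M_1(F)\sigma_q$: take the identity as the conjugating element.

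The only mildly delicate point is the identification of the Weyl-group image over $F$ with that over $\mathbb C$, namely $N_{M_1}(T)(F)\cap N_M(T)(\mathbb C)\cdot T(F)$. This reduces to the statement that $N_M(T)/T$ is a finite étale (constant) group, so that its $F$-points equal its $\mathbb C$-points. This is standard, and I expect no real obstacle there.
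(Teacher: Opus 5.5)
Your proof is correct and is essentially the paper's argument. The paper lifts the image of $n$ in $\widetilde W_{M_1}\subset\widetilde W_M$ to some $n_1\in N_{0,M_1}$ and uses that the kernel of $N_{0,M}\to\widetilde W_M$ is $T(\mathbb C)$. You instead first split off the translation via $N_{0,M}=t^{X_*(T)}\,N_M(T)(\mathbb C)$, and then do the same lift-and-compare-modulo-$T(\mathbb C)$ step in the finite Weyl group $W_M$, which has the small advantage of proving the kernel statement rather than quoting it. Your ``delicate point'' can even be bypassed: $m=t^{-\lambda}n$ lies directly in $M(\mathbb C)\cap M_1(F)=M_1(\mathbb C)$.
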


\begin{proof}
The affine Weyl component of $n$ belongs to
\[
  \widetilde W_{M_1}=X_*(T)\rtimes W_{M_1}
  \subset\widetilde W_M.
\]
Choose $n_1\in N_{0,M_1}$ with the same affine Weyl component as $n$.  Then
$nn_1^{-1}$ belongs to the kernel of
$N_{0,M}\to\widetilde W_M$, which is $T(\mathbb{C})$.  Since
$T(\mathbb{C})\subset N_{0,M_1}$, it follows that $n\in N_{0,M_1}$.
\end{proof}

\subsection{Reduction of a straight element}
Our goal in this subsection is to show that every straight element can be $q$-conjugated into a Levi subgroup such that it remains straight in the Levi subgroup and that its projection to the extended affine Weyl group is superbasic.
\begin{lemma}\label{lem:standard-in-levi}
Let $H$ be connected reductive with maximal torus $T$, and let $L\subset H$ be a
Levi subgroup containing $T$.  Suppose that a standard element
$s\in H(F)\sigma_q$ is $H(F)$-conjugate to an element
of $L(F)\sigma_q$.  Then its $H(F)$-conjugacy class
contains an element that is straight in $L(F)\sigma_q$:
\[
  s_L=n_L\sigma_q,
  \qquad n_L\in N_{0,L}.
\]
\end{lemma}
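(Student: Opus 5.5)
We will reduce the statement to \cref{lem:NZ-newton-levi} applied inside $L$, and then use the uniqueness part of the Nie--Zhou decomposition to see that no pro-unipotent factor survives.

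\textbf{Step 1: apply the normal form inside $L$.} Let $s'\in L(F)\sigma_q$ be $H(F)$-conjugate to $s$. Let $\mathcal C_L$ be the $L(F)$-conjugacy class of $s'$ in $L(F)\sigma_q$. Applying \cref{lem:NZ-newton-levi} with $H$ replaced by $L$ gives a Levi subgroup $M\subset L$ containing $T$ and a representative $s_Mh=hs_M\in\mathcal C_L$ with
\[
  s_M=n\sigma_q,\qquad n\in N_{0,M},\qquad p_M(s_M)\in\Omega_M,\qquad h\in\mathcal U_M .
\]
Since $N_{0,M}\subset N_{0,L}$, the factor $s_M$ lies in $N_{0,L}\sigma_q$. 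Moreover, $s_M$ is straight in $L(F)\sigma_q$: a length-zero element of $\widetilde W_M$ whose Newton point is $M$-central is straight in $\widetilde W_L$, exactly as in the proof of \cite[Proposition~3.1]{NZ}, where $M=Z_L(\bar\nu)$ is constructed precisely so that this holds.

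\textbf{Step 2: show $h=1$ using uniqueness.} The element $s_Mh$ is $H(F)$-conjugate to $s$. By \cite[Theorem~1.1]{NZ} the standard/pro-unipotent factorization in $H(F)\sigma_q$ is unique and compatible with conjugation. Since $s$ is standard in $H$, its own factorization is $s\cdot 1$. On the other hand, $s_M$ is standard in $H$ because $N_{0,M}\subset N_{0,H}$, and $h\in\mathcal U_M$ is pro-unipotent in $H(F)$ and commutes with $s_M$. Hence $s_M\cdot h$ is also the Nie--Zhou decomposition of $s_Mh$. Conjugation invariance of the decomposition then forces $h=1$. So $s_L:=s_M=n\sigma_q$ with $n_L:=n\in N_{0,L}$ lies in the $H(F)$-conjugacy class of $s$ and is straight in $L(F)\sigma_q$.

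\textbf{Main obstacle.} The main obstacle is Step 2. We must check that the notion of ``pro-unipotent'' used in \cite[Theorem~1.1]{NZ} for $H$ includes elements of $\mathcal U_M\subset\mathcal U_L$. This requires the Iwahori of $M$ to sit inside an Iwahori of $H$, which holds because all of them are induced by the same alcove via $B_-$. We must also check that the uniqueness is formulated so that conjugating the pair $(s_M,h)$ by $g\in H(F)$ gives the pair for the conjugate element. If this compatibility is not stated explicitly, we would instead argue directly: the $s$-part of an element of $H(F)\sigma_q$ is characterized intrinsically, for instance as the part acting semisimply (``topologically Jordan''), and this characterization is manifestly conjugation invariant. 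A secondary point is that Step 1 needs the straightness of $s_M$ in $L$ rather than just in $M$; if needed we would simply note that the lemma only requires $n_L\in N_{0,L}$ together with straightness, and cite the straightness of $x$ from \cite[Proposition~3.1]{NZ} as recorded in the proof of \cref{lem:NZ-newton-levi}.
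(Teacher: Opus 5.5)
Your Step~2 matches the paper's argument. In fact it is simpler than you fear: $s_Mh$ is $H(F)$-conjugate to $s$, so it is itself standard in $H(F)\sigma_q$. Then $(s_Mh)\cdot 1$ and $s_M\cdot h$ are two standard/pro-unipotent decompositions, using $N_{0,M}\subset N_{0,H}$ and $\mathcal U_M\subset\mathcal U_H$, and \cite[Theorem~1.1(a)]{NZ} gives $h=1$. No intrinsic characterization of the standard part is needed.

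The gap is in Step~1. Applied inside $L$, \cref{lem:NZ-newton-levi} only gives $p_M(s_M)\in\Omega_M$, that is, straightness in $M(F)\sigma_q$. Your claim that $s_M$ is then straight in $L(F)\sigma_q$ is false in general, even with $M=Z_L(\bar\nu)$ and $\bar\nu$ dominant. Here is a counterexample.
\begin{itemize}
\item Take $L=GL_6$ and a class whose Newton point has slopes $\tfrac23$ and $\tfrac13$, each with multiplicity $3$. Then $M=GL_3\times GL_3$.
\item Since $\Omega_M\cong\mathbb Z^2$, $p_L(s_M)$ is forced to be the unique $\tau=t^\lambda w\in\Omega_M$ with this Newton point. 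Its $\lambda$ is a permutation of $(1,1,0)$ on the slope-$\tfrac23$ block and of $(1,0,0)$ on the other block.
\item Because $w\in W_M$ permutes the positive roots outside $M$, the Iwahori--Matsumoto formula gives $\ell_L(\tau)=\ell_M(\tau)+\sum|\lambda_p-\lambda_q|$, summed over pairs $p,q$ in different blocks. This equals $0+5=5$.
\item On the other hand, $\tau^3$ is a translation by $(2,2,2,1,1,1)$ up to ordering, of length $9<15=3\ell_L(\tau)$. So $\tau$ is not straight in $\widetilde W_L$.
\end{itemize}
Your fallback does not rescue this. The straight element $x$ in the proof of \cref{lem:NZ-newton-levi} is replaced by $\bar z x\bar z^{-1}$, and conjugation by a finite Weyl element does not preserve straightness. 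The $\tau$ above is exactly such a conjugate of a straight element. The general principle you state, that length zero in $\widetilde W_M$ plus an $M$-central Newton point implies straight in $\widetilde W_L$, fails even more easily. For instance, take $\tau=(\omega,\omega)$ in $GL_2\times GL_2\subset GL_4$, with $\omega$ the generator of $\Omega_{GL_2}$. Then $\tau$ has length $2$ in $\widetilde W_{GL_4}$, while $\tau^2$ is central.

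The repair is what the paper does. Apply \cite[Theorem~1.1(b)]{NZ} directly inside $L$. After an $L(F)$-conjugation this gives a distinguished representative $s_Lh_L=h_Ls_L$, with $s_L\in N_{0,L}\sigma_q$ straight in $L$ itself and $h_L$ pro-unipotent in $L(F)$. Your Step~2 then applies verbatim and gives $h_L=1$. Alternatively, keep your Step~1 only to produce the standard element $s_M\in N_{0,L}\sigma_q$ in the class, then apply \cite[Theorem~1.1(b)]{NZ} in $L$ and the uniqueness argument once more.

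The later use in \cref{prop:superbasic-reduction} needs only $n_L\in N_{0,L}$. Still, the lemma as stated asserts straightness in $L$, and your argument as written does not deliver it.
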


\begin{proof}
Let $a\in L(F)\sigma_q$ be a representative of the given $H(F)$-conjugacy class.  Applying
\cite[Theorem~1.1(b)]{NZ} inside $L$, followed by an $L(F)$-conjugation, gives a distinguished representative
\[
  a'=s_Lh_L=h_Ls_L,
\]
where $s_L\in N_{0,L}\sigma_q$ is straight and $h_L$ is pro-unipotent in
$L(F)$. Next, we claim that $h_L$ is pro-unipotent in $H(F)$. Indeed, let us choose a Borel subgroup $B_H$ of $H$ and the corresponding Iwahori subgroup $\mathcal{I}_H$ of $H(F)$. Then $\mathcal{I}_L:=L(F)\cap\mathcal{I}_H$ is an Iwahori subgroup of $L(F)$.  Since $\mathcal{I}_H$ and $\mathcal{I}_L$ are compatible, the
pro-unipotent radical $\mathcal{U}_L$ of $\mathcal{I}_L$ is the intersection of $L(F)$ with the pro-unipotent radical $\mathcal{U}_H$ of $\mathcal{I}_H$. It follows that $h_L$ is also pro-unipotent when viewed in
$H(F)$.

Since $a'$ is standard in $H(F)\sigma_q$, we have two decompositions of
$a'$ as a product of commuting standard and pro-unipotent elements:
\[
  a'=a'\cdot1
  \qquad\text{and}\qquad
  a'=s_Lh_L.
\]
The uniqueness assertion of \cite[Theorem~1.1(a)]{NZ} now gives
$h_L=1$ and $a'=s_L$.
\end{proof}

For the finite Weyl group $W_H$, let
\[
  V_H=\mathbb R\langle\Phi_H^\vee\rangle
\]
be its reflection representation. Recall \cite{Lus2} that $w\in W_H$ is called
\emph{elliptic} if the fixed point subspace $V_H^w$ of $V_H$ under $w$ is trivial, that is, $V_H^w=\{0\}$. Note that the property of being elliptic is invariant under conjugation. There is an equivalent way \cite{Lus1} to define elliptic elements in a Weyl group. We give a proof of this equivalence for completeness.
\begin{lemma}\label{lem:elliptic-parabolic}
An element $w\in W_H$ is elliptic if and only if it is not contained in a
finite Weyl conjugate of a proper standard parabolic subgroup of $W_H$.
\end{lemma}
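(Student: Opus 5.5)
We prove the two implications separately, working in the reflection representation $V_H=\mathbb R\langle\Phi_H^\vee\rangle$. A proper standard parabolic subgroup means $W_I=\langle s_\alpha:\alpha\in I\rangle$ for some $I\subsetneq\Delta_H$.

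\emph{If $w$ lies in a conjugate of a proper standard parabolic, then $w$ is not elliptic.} Suppose $w\in xW_Ix^{-1}$ with $x\in W_H$ and $I\subsetneq\Delta_H$. Since ellipticity is conjugation invariant, we may replace $w$ by $x^{-1}wx$ and assume $w\in W_I$. Pick a simple root $\beta\in\Delta_H\setminus I$. Let $\varpi\in V_H$ be the fundamental coweight dual to $\beta$: it pairs to $1$ with $\beta$ and to $0$ with all other simple roots. This makes sense because the simple roots form a basis of the dual of $V_H$. For $\alpha\in I$ we have $s_\alpha(\varpi)=\varpi-\langle\alpha,\varpi\rangle\alpha^\vee=\varpi$. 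Hence $W_I$ fixes $\varpi\neq0$, so $V_H^w\neq0$ and $w$ is not elliptic.

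\emph{If $w$ is not elliptic, then $w$ lies in a conjugate of a proper standard parabolic.} Suppose $V_H^w\neq\{0\}$ and pick $0\neq v\in V_H^w$. Replacing $w$ by a conjugate $x^{-1}wx$ and $v$ by $x^{-1}v$, we may assume $v$ lies in the closed dominant chamber. Set $I=\{\alpha\in\Delta_H:\langle\alpha,v\rangle=0\}$.
\begin{itemize}
\item[(a)] $I$ is proper. If $I=\Delta_H$, then $v$ pairs to zero with every simple root, and hence with all of $V_H^*$. This forces $v=0$, a contradiction. This is exactly why we work in $V_H$ rather than all of $X_*(T)\otimes\mathbb R$, where the central directions would break the argument.
\item[(b)] $w\in W_I$. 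This uses the standard fact (Bourbaki, or Humphreys, Reflection Groups, Thm.~1.12) that the stabilizer in $W_H$ of a dominant vector $v$ is the standard parabolic subgroup generated by the simple reflections fixing $v$. Since $w$ fixes $v$, we get $w\in W_I$.
\end{itemize}

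Undoing the conjugation, the original $w$ lies in a $W_H$-conjugate of the proper standard parabolic $W_I$. This completes both directions.

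The only genuinely nontrivial input is the stabilizer theorem in step (b). I will cite it rather than reprove it. If a self-contained argument is wanted, one inducts on $\ell(w)$: if $\ell(w)>0$, choose a simple $\alpha$ with $\ell(ws_\alpha)<\ell(w)$, so $w(\alpha)<0$. Then
\[
0\le\langle\alpha,v\rangle=\langle w\alpha,wv\rangle=\langle w\alpha,v\rangle\le0,
\]
where the last inequality holds because $w\alpha$ is a negative root and $v$ is dominant. Hence $\langle\alpha,v\rangle=0$, so $\alpha\in I$ and $s_\alpha$ fixes $v$. Then $ws_\alpha$ also fixes $v$ and is shorter, so induction gives $ws_\alpha\in W_I$, and therefore $w\in W_I$.
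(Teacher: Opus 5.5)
Your proof is correct and follows essentially the same route as the paper. For one direction you exhibit a nonzero $W_I$-fixed vector (your fundamental coweight $\varpi$ plays the role of the paper's orthogonal complement of the coroots in $J$). For the other you move a nonzero fixed vector into the closed dominant chamber and apply the stabilizer theorem; your inductive proof of that theorem is a valid self-contained version of the fact the paper simply uses.
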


\begin{proof}
If a conjugate $\widetilde w$ of $w$ lies in $W_J$ for a proper subset $J$ of the simple roots, then $\tilde{w}$ fixes
pointwise the nonzero orthogonal complement of the span of the coroots in
$J$, so it is not elliptic.  Conversely, suppose that $0\neq v\in V_H^w$.
Choose $u\in W_H$ such that $u(v)$ belongs to the closed dominant chamber,
and let
\[
  J=\{\alpha_i:\langle\alpha_i,u(v)\rangle=0\}.
\]
Since $u(v)\neq0$, the subset $J$ is proper.  The stabilizer of $u(v)$ in
$W_H$ is $W_J$, and therefore $uwu^{-1}\in W_J$.
\end{proof}

\begin{lemma}\label{lem:transitive-length-zero}
Let $\widetilde W_R$ be an extended affine Weyl group with irreducible
reduced root system $R$, and let $\Omega_R$ be its length-zero subgroup.
Suppose that $\tau\in\Omega_R$ acts transitively on the vertices of the
closed fundamental alcove $\overline{\mathfrak{a}}$.
\begin{enumerate}
\item 
Then $R$ is of type $A_{m-1}$ for some $m\geq2$. 
\item Moreover, the action of $\tau$ on the extended Dynkin diagram $\tilde{\Gamma}$ is a rotation
by $k$ vertices with $\gcd(k,m)=1$, and the finite Weyl component of $\tau$
is an $m$-cycle.  In particular, it is conjugate in $W(A_{m-1})$ to a Coxeter element.
\end{enumerate}
\end{lemma}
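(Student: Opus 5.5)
The plan is to translate the hypothesis into a statement about the extended Dynkin diagram. Then part (1) follows from the classification of marks, and part (2) from the explicit description of $\Omega_R$ in type $A$.

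\textbf{Step 1: vertices of the alcove and the action of $\Omega_R$.} The vertices of $\overline{\mathfrak a}$ are in natural bijection with the nodes of $\tilde\Gamma$: the vertex opposite the wall $\{\alpha_i=0\}$ (or $\{\alpha_0=1\}$, with $\alpha_0=-\theta+\delta$) corresponds to the node $i$. An element $\tau\in\Omega_R$ stabilizes $\mathfrak a$, so it permutes its walls, and this is the usual action of $\Omega_R$ on $\tilde\Gamma$ by diagram automorphisms. Transitivity on vertices is therefore transitivity on the nodes of $\tilde\Gamma$.

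\textbf{Step 2: part (1).} Write $\theta=\sum_i a_i\alpha_i$ and set $a_0=1$, so that $\delta=\sum_{i=0}^{\mathrm{rk} R} a_i\alpha_i$ with the $a_i$ the marks. Any element of $\Omega_R$ acts by an affine isometry that preserves the set of affine simple roots, and it fixes $\delta$ up to this permutation. Hence it preserves the marks: $a_{\tau(i)}=a_i$ for all $i$. If $\tau$ is transitive on nodes, then every mark equals $a_0=1$, so every node is special. Inspecting the tables of marks, or using that $\sum_i a_i$ is the Coxeter number $h$ while the number of nodes is $\mathrm{rk} R+1$, this happens only in type $A_{m-1}$, where $h=m=\mathrm{rk} R+1$. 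Here $m\ge 2$ because $R$ is nonempty.

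\textbf{Step 3: part (2).} In type $A_{m-1}$ the diagram $\tilde\Gamma$ is an $m$-cycle. The image of $\Omega_R$ in $\operatorname{Aut}(\tilde\Gamma)$ is the cyclic group of rotations generated by $\tau_1$, the rotation by one vertex. This image is the image of $\pi_1$ as in \eqref{eq:omega-pi1}, which is the subgroup of diagram automorphisms fixing the orientation. So $\tau$ acts as $\tau_1^k$ for some $k$, and $\tau_1^k$ is transitive on the $m$ nodes exactly when $\gcd(k,m)=1$.

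For the finite Weyl component, realize $\widetilde W$ inside the extended affine Weyl group of $GL_m$, i.e.\ $\mathbb Z^m\rtimes S_m$ modulo the central translation. The standard generator of length zero is $\tau_1=t^{\varepsilon_1}(1\,2\,\cdots\,m)$, up to the choice of orientation. Its finite component is the $m$-cycle $\zeta=(1\,2\,\cdots\,m)$. Since $W\to W_R$ is a homomorphism, the finite component of $\tau$ agrees with that of $\tau_1^k$ modulo elements acting trivially on the diagram. Those elements are trivial in $\Omega_R$ modulo the center, so the finite component of $\tau$ is $\zeta^k$. The power $\zeta^k$ is again an $m$-cycle iff $\gcd(k,m)=1$.

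Finally, Coxeter elements of $W(A_{m-1})=S_m$, such as $s_1\cdots s_{m-1}$, are $m$-cycles. All $m$-cycles are conjugate in $S_m$, which gives the last claim.

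\textbf{Main obstacle.} The delicate point is Step 3: identifying the finite component of the given $\tau$ precisely, rather than only its image in $\operatorname{Aut}(\tilde\Gamma)$. I would handle it by noting that $\Omega_R\to\operatorname{Aut}(\tilde\Gamma)$ is injective for the adjoint quotient. The finite component is unchanged by passing to the adjoint group, since the translation lattice changes but $W_R$ does not. This reduces the computation to the explicit adjoint type $A$ case above.
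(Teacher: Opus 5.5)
Your proof is correct. Part (2) follows the paper's route, but part (1) uses a different invariant.

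\textbf{Part (1).} The paper deduces type $A$ from the fact that a node-transitive diagram automorphism forces every node of $\tilde\Gamma$ to have the same number of incident edges, and then inspects the list of extended Dynkin diagrams. You instead use that $\tau$ fixes $\delta=\sum_i a_i\alpha_i$ while permuting the linearly independent affine simple roots. So the marks are $\tau$-invariant, hence all equal to $a_0=1$, and $h=\mathrm{rk}\,R+1$ singles out type $A$. Both arguments rely on the classification at the last step. Yours works with the affine root system rather than the graph, and it shows in passing that every vertex of the alcove is hyperspecial.

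\textbf{Part (2).} Both arguments reduce to $\tau$ being a rotation by $k$ with $\gcd(k,m)=1$, and both compute the finite component as $\zeta^k$. The paper takes for granted that the finite component of $\tau$ is determined by its action on $\tilde\Gamma$. Your passage to the adjoint extended affine Weyl group supplies exactly that justification. There $\Omega$ acts faithfully on $\tilde\Gamma$: an element fixing all vertices of the alcove is the identity on the semisimple apartment, so it has trivial linear part. The finite component is also unchanged under this passage.

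\textbf{A small inaccuracy.} The image of $\Omega_R$ in $\operatorname{Aut}(\tilde\Gamma)$ is the full rotation group only in the adjoint case; for $SL_m$ it is trivial. You only need that the image consists of rotations. That holds in general, and for $m\geq3$ it already follows from transitivity, because a reflection of an $m$-cycle has orbits of size at most $2$.
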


\begin{proof}
Since $\tau$ acts transitively on the vertices of $\overline{\mathfrak{a}}$, it acts transitively on the affine simple reflections and hence any two vertices in $\tilde{\Gamma}$ have the same number of incident edges
(see \cite[Section~3.5]{HN}). It follows that $R$ is of type $A_{m-1}$ for some $m\geq2$.

For $(2)$, since $\tau$ acts transitively on a diagram of type $\widetilde{A}_{m-1}$, it must be a rotation by (say) $k$ vertices where
$\gcd(k,m)=1$. Under the standard identification
$W(A_{m-1})\simeq S_m$, the finite Weyl component of a one-step
rotation is the $m$-cycle $(1\,2\,\cdots\,m)$. 
Thus,
the finite Weyl component of a $k$-step rotation is
$(1\,2\,\cdots\,m)^k$, which is again an $m$-cycle when
$\gcd(k,m)=1$. Since every $m$-cycle is conjugate in $S_m$ to
$(1\,2\,\cdots\,m)=s_1\cdots s_{m-1}$, the final assertion follows.
\end{proof}
Following \cite[Section~3.5]{HN}, $\tau\in\Omega_H$ is called
\emph{superbasic} if every orbit of its action on the affine simple
reflections, or equivalently, on the vertices of the extended Dynkin diagram $\tilde{\Gamma}$ of $H$, is a union of connected components of $\tilde{\Gamma}$. We have the following lemma (cf. \cite[Section~5.9]{GHKR2}).
\begin{lemma}
\label{lem:elliptic-superbasic}
Let
\[
  s_H=n_H\sigma_q,\qquad n_H\in N_{0,H},
  \qquad \tau=p_H(s_H)\in\Omega_H.
\]
Assume that $s_H$ is indecomposable.
Write
\[
  \tau=t^\mu w,
  \qquad \mu\in X_*(T),\quad w\in W_H.
\]
Then $w$ is elliptic and
$\tau$ is superbasic. Moreover, every connected component of the Dynkin diagram of $H^{\mathrm{der}}$ is of type $A$, and $w$ is $W_H$-conjugate to a product of Coxeter elements, one for each connected component.
\end{lemma}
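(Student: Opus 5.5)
We argue by contraposition throughout, using indecomposability to exclude every proper Levi. "Indecomposable" here means that the $H(F)$-conjugacy class of $s_H$ in $H(F)\sigma_q$ has no representative in $L(F)\sigma_q$ for a proper Levi subgroup $L\subsetneq H$ containing $T$.

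\textbf{Ellipticity of $w$.} Suppose $w$ is not elliptic. By \cref{lem:elliptic-parabolic}, some conjugate $uwu^{-1}$, with $u\in W_H$, lies in $W_J$ for a proper $J\subset\Delta_H$. Take a lift $\dot u\in N_H(T)(\mathbb C)$ and conjugate. Since $\dot u$ is constant, it commutes with $\sigma_q$, and
\[
\dot u s_H\dot u^{-1}=(\dot u n_H\dot u^{-1})\sigma_q .
\]
The element $\dot u n_H\dot u^{-1}$ lies in $N_{0,H}$ and has image $t^{u\mu}uwu^{-1}\in X_*(T)\rtimes W_J=\widetilde W_{M_J}$. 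The argument of \cref{lem:inherit-standard} (the kernel is $T(\mathbb C)$) then gives $\dot u n_H\dot u^{-1}\in N_{0,M_J}$. Hence the class of $s_H$ meets $M_J(F)\sigma_q$, with $M_J$ a proper Levi subgroup. This contradicts indecomposability, so $w$ is elliptic.

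\textbf{Superbasicity of $\tau$.} Write $\tilde\Gamma=\bigsqcup_i\tilde\Gamma_i$ according to the simple factors of $H^{\mathrm{der}}$. Suppose $\tau$ is not superbasic. Then some $\tau$-orbit $\mathcal O$ on the vertices misses a vertex of some component it meets. Set $K=\tilde\Gamma\setminus\mathcal O$; this set is $\tau$-stable and contains a proper nonempty subset of each component that $\mathcal O$ meets.

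Our first attempt is to use the parahoric $\mathcal P_K$. Since $\tau$ normalizes it, $s_H$ normalizes $\mathcal P_K$. Its reductive quotient $\bar L_K$ is a reductive group over $\mathbb C$ whose root system is that of $K$. One then transports, via a finite Weyl element $y\in\widetilde W_H$ sending the facet of $K$ to one containing the origin, to get $y\tau y^{-1}\in\widetilde W_{M}$ for a standard Levi $M$ of $H$. This $M$ has roots spanned by the finite parts of $K$, and it is proper because $K$ omits at least one vertex from a component.

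A cleaner alternative is to use the Newton point. By [HN, Section~3], $\tau\in\Omega_H$ is superbasic if and only if it is not conjugate into $\widetilde W_M$ for any proper Levi $M$; equivalently, $\tau$ is basic in no proper Levi. Assuming this equivalence, non-superbasicity gives $x\in\widetilde W_H$ with $x\tau x^{-1}\in\widetilde W_M$ for a proper Levi $M$. Lifting $x$ to $N_{0,H}$ gives a conjugate of $s_H$ of the form $n'\sigma_q$ with $n'\in N_{0,H}\cap N_M(T)(F)=N_{0,M}$, again by \cref{lem:inherit-standard}. This contradicts indecomposability.

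\textbf{Type $A$ and Coxeter elements.} Once $\tau$ is superbasic, each orbit of $\tau$ is a full component $\tilde\Gamma_i$. The power $\tau^{k_i}$ stabilizing the component then acts transitively on its vertices, which are exactly the vertices of the closed alcove of that factor. Applying \cref{lem:transitive-length-zero} to the image of $\tau$ in the factor's extended affine Weyl group shows that each factor is of type $A$ (note $\tau$ permutes the factors). It also shows that, when $\tau$ stabilizes a component, the component of $w$ there is an $m$-cycle, i.e.\ conjugate to a Coxeter element.

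For a component that is not stabilized, $\tau$ permutes an orbit of $k$ isomorphic factors of type $A_{m-1}$ cyclically. Its Weyl component then acts as $(x_1,\dots,x_k)\mapsto(\phi_1x_k,x_1,\dots)$. This is conjugate, within the product, to a single $m$-cycle on one factor and the identity elsewhere. That contradicts ellipticity unless $k=1$, so every component is stabilized.

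\textbf{Main obstacle.} The hard part is the superbasic step: making the [HN] characterization, or the parahoric transport, rigorous, and checking that conjugation by a lift of $x$ preserves the form $N_{0}\sigma_q$. The latter holds because lifts of $x$ lie in $N_{0,H}$, which is normalized appropriately. The non-stabilized-component case must also be handled carefully.
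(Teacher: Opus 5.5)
Your ellipticity argument is the paper's, and so is your last step (applying \cref{lem:transitive-length-zero} factor by factor once transitivity is known). The gap is the superbasicity step, which you flag yourself but do not close.

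In route (b) you simply assume that a non-superbasic element of $\Omega_H$ is conjugate into $\widetilde W_M$ for a proper Levi $M$. That implication is the entire content of this step, so the step is assumed rather than proved.

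Route (a) fails as written, for two reasons.
\begin{itemize}
\item Because $w\in W_H=\prod_i W_i$, the element $\tau$ preserves each component $\tilde\Gamma_i$. So an orbit $\mathcal O$ lies in a single component, and $K=\tilde\Gamma\setminus\mathcal O$ contains every other component entirely. Hence $\mathcal P_K$ is not a parahoric when $H^{\mathrm{der}}$ is not simple.
\item Even for a genuine parahoric, the root system of its reductive quotient is in general only a pseudo-Levi subsystem. For instance, deleting the middle node of $\tilde C_2$ gives $A_1\times A_1\subset C_2$. So the ``transport'' to a standard Levi $M$ is unjustified.
\end{itemize}

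The missing idea is that superbasicity follows at once from the ellipticity you have already proved, with no return to Levi subgroups. Fix a component and its closed alcove $\overline{\mathfrak a}_i$. This is a simplex on which $\tau$ acts by an affine map with linear part $w_i$ (the $i$-th component of $w$), permuting the vertices. Suppose the vertices formed two distinct $\tau$-orbits. Their barycenters $b_1,b_2$ would both be fixed by $\tau$, and they would be distinct, since the vertices of a simplex are affinely independent. Then $b_1-b_2\neq0$ would be fixed by $w$, contradicting ellipticity. So $\tau$ is transitive on the vertices of each $\overline{\mathfrak a}_i$, i.e.\ superbasic. The paper does exactly this via \cref{simplex}, which gives $\dim\ker(w-\operatorname{Id})=r-1$ when there are $r$ orbits.

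Finally, delete your ``non-stabilized component'' paragraph. That case is vacuous for the same reason, and the map $(x_1,\dots,x_k)\mapsto(\phi_1x_k,x_1,\dots)$ you describe is not an element of $W_H$ at all.
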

\begin{proof}
Suppose first that $w$ is not elliptic.  By
\cref{lem:elliptic-parabolic}, there exist $u\in W_H$ and a proper standard
Levi subgroup $H_J\subsetneq H$ such that $u^{-1}wu\in W_J$. Hence,
\[
  u^{-1}\tau u
  =t^{u^{-1}\mu}(u^{-1}wu)
  \in X_*(T)\rtimes W_J=\widetilde W_{H_J}\subset\widetilde{W}_H.
\]
Let $\dot u\in N_H(T)(\mathbb C)$ be a lift of $u$. Then
\[
  p_{H}(\dot u^{-1}(n_H\sigma_q)\dot u)
  =u^{-1}\tau u\in\widetilde W_{H_J}\subset\widetilde{W}_H.
\]
Since the preimage of
$\widetilde W_{H_J}$ under $p_H$ is
$N_{H_J}(T)(F)\sigma_q$, we have $\dot u^{-1}(n_H\sigma_q)\dot u\in H_J(F)\sigma_q$, which contradicts the hypothesis. Thus, $w$ is
elliptic.

Let $\Gamma$ denote the Dynkin diagram of $H$ and let $\widetilde{\Gamma}$ denote the corresponding extended Dynkin diagram.
Since $\tau\in\Omega_H$, $\tau$ stabilizes the
fundamental alcove and thus permutes the vertices of $\widetilde{\Gamma}$. Since $W_H$ is the direct product of the
Weyl groups of the irreducible components of $\Gamma$, and the translations by $X_{*}(T)$ preserve the set of affine hyperplanes corresponding to each connected component of $\Gamma$, $\tau$ preserves every component of $\widetilde{\Gamma}$. Thus, our problem reduces to the case of irreducible Dynkin diagrams.

We now need the following elementary lemma.
\begin{lemma}\label{simplex}
Let 
\[
    \varphi:V\rightarrow V,
    \quad
    v\mapsto g\cdot v+b, \qquad b,v\in V,
    \quad g\in GL(V),
\]
be an affine map preserving a full-dimensional simplex $\Sigma\subset V$ and permuting its vertices.  If the associated permutation on the set of vertices has $r$ orbits, then $\dim(\Sigma^\varphi)=r-1$. Moreover,
\[
  \dim\ker(g-\operatorname{Id})=r-1.
\]
\end{lemma}
\begin{proof}
Let $v_0,\ldots,v_d$ denote the vertices of $\Sigma$ and let us write points of $\Sigma$ in barycentric coordinates:
\[
  x=\sum_{j=0}^{d}a_j(x)v_j,
  \qquad a_j(x)\geq0,
  \qquad \sum_ja_j(x)=1,
  \qquad 
  x\in\Sigma.
\]
Then 
\[
x\in\Sigma^\varphi
\iff
a_j(x)=a_{j'}(x)
\quad\text{whenever $v_j$ and $v_{j'}$ belong to the same
$\varphi$-orbit.}
\]
Thus, we have
$\dim(\Sigma^\varphi)=r-1$.

Let $x_0$ be the barycenter of $\Sigma$. Since $\varphi$ permutes the vertices of $\Sigma$, one has $\varphi(x_0)=x_0$. The fixed-point set of $\varphi$ in $V$ is therefore the affine subspace $x_0+\ker(g-\operatorname{Id}).$ Consequently,
\[
  \Sigma^\varphi
  =
  \Sigma\cap\bigl(x_0+\ker(g-\operatorname{Id})\bigr).
\]
Since $\Sigma$ is full-dimensional and $x_0$ lies in its interior, this
intersection has dimension $\dim\ker(g-\operatorname{Id})$. Therefore, we have $ \dim\ker(g-\operatorname{Id})=r-1.$
\end{proof}
Fix an irreducible component and let $\overline{\mathfrak{a}}$ be its
closed fundamental alcove. Let $r$ be the number of orbits of the action of $\tau$ on the vertices of $\overline{\mathfrak{a}}$. We claim $r=1$. By \cref{simplex} we have $r-1=\dim\ker(w-\operatorname{Id}).$
Since $w$ is elliptic, $\ker(w-\operatorname{Id})=0$, and hence $r=1$. Thus, $\tau$ is superbasic.

The remaining assertions follow from \cref{lem:transitive-length-zero}.
\end{proof}
\begin{corollary}
\label{prop:superbasic-reduction}
Let $H$ be a connected reductive group with maximal torus $T$, and let
$s=n\sigma_q$ be a standard element of $H(F)\sigma_q$.  Then there
exists a Levi subgroup $L\subset H$ that contains $T$, an element
$x\in H(F)$, and an element $n_L\in N_{0,L}$ such that
\begin{equation}\label{eq:superbasic-conjugation}
  x^{-1}n\sigma_q(x)=n_L
\end{equation}
and the element $p_L(n_L\sigma_q)\in\Omega_L$ is superbasic. In particular, every connected component of the Dynkin diagram of
$L^{\mathrm{der}}$ is of type $A$, and the finite Weyl component of
$p_L(n_L\sigma_q)$ is $W_L$-conjugate to a product of Coxeter elements, one for each connected component.
\end{corollary}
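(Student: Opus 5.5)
The plan is to choose a minimal Levi subgroup $L\supseteq T$ of $H$ whose $F$-points contain an $H(F)$-conjugate of $s$. Then I show that the resulting straight representative in $L$ is indecomposable in $L$, so that \cref{lem:elliptic-superbasic} applies inside $L$.

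\textbf{Step 1 (choice of $L$).} Consider the set of Levi subgroups $L\subset H$ containing $T$ such that $s$ is $H(F)$-conjugate to an element of $L(F)\sigma_q$. This set is nonempty because $H$ belongs to it, and it is finite. I choose $L$ of minimal dimension. By \cref{lem:standard-in-levi}, the $H(F)$-conjugacy class of $s$ contains an element $s_L=n_L\sigma_q$ with $n_L\in N_{0,L}$ that is straight in $L(F)\sigma_q$.

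\textbf{Step 2 (length zero).} Applying \cref{lem:NZ-newton-levi} inside $L$ to the $L(F)$-class of $s_L$ produces a Levi subgroup $M\subset L$ and a representative $s'h'$ with $p_M(s')\in\Omega_M$ and $h'\in\mathcal U_M$. The argument of \cref{lem:standard-in-levi} shows that $h'$ is pro-unipotent in $H(F)$. Since $s_L$ is standard, the uniqueness in \cite[Theorem~1.1(a)]{NZ} forces $h'=1$, so the class contains $s'\in N_{0,M}\sigma_q$. Minimality of $L$ then gives $M=L$. After renaming $n_L$, we may therefore assume $p_L(n_L\sigma_q)\in\Omega_L$.

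\textbf{Step 3 (indecomposability).} If $n_L\sigma_q$ were $L(F)$-conjugate into $L'(F)\sigma_q$ for some proper Levi subgroup $L'\subsetneq L$, then, since Levi subgroups of $L$ containing $T$ are Levi subgroups of $H$, this would contradict minimality. One subtlety is that the definition of indecomposable refers to arbitrary Levi subgroups. Every Levi subgroup of $L$ is $L(\mathbb C)$-conjugate to one containing $T$, and this conjugation is by constants, hence compatible with $\sigma_q$. So we may assume $T\subset L'$. Thus $n_L\sigma_q$ is indecomposable in $L$. This is exactly the hypothesis used in the proof of \cref{lem:elliptic-superbasic}, which only needs non-conjugacy into $H_J$ by constant Weyl lifts.

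\textbf{Step 4 (conclusion).} \cref{lem:elliptic-superbasic} applied with $H=L$ shows that $p_L(n_L\sigma_q)$ is superbasic. It also shows that the components of the Dynkin diagram of $L^{\mathrm{der}}$ are of type $A$, and that the finite Weyl component is conjugate to a product of Coxeter elements.

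Finally, the conjugacy $x s x^{-1}=n_L\sigma_q$, for some $x$, translates via $x\,n\sigma_q\,x^{-1}=x\,n\,\sigma_q(x^{-1})\sigma_q$. Replacing $x$ by $x^{-1}$ gives \eqref{eq:superbasic-conjugation} in the stated form.

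The main obstacle is Step 2. Minimality gives a straight representative, but not directly one whose image has length zero in $\widetilde W_L$. This must be extracted by combining the Newton-Levi statement with uniqueness of the Jordan-type decomposition, and by checking that the Levi subgroup it produces cannot be smaller than $L$.
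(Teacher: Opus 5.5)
Your proposal is correct and follows essentially the same route as the paper. The paper also takes a minimal Levi subgroup containing $T$, uses \cref{lem:standard-in-levi}, then \cref{lem:NZ-newton-levi} inside $L$ together with uniqueness of the standard--pro-unipotent decomposition to get length zero, uses minimality to force $M=L$, and concludes with \cref{lem:elliptic-superbasic}. Your explicit verification that minimality makes $n_L\sigma_q$ indecomposable in $L$ spells out what the paper leaves to the phrase ``by the minimality of $L$''; this includes conjugating an arbitrary Levi subgroup by a constant element so that it contains $T$.
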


\begin{proof}
Choose a Levi subgroup $L\subset H$ of minimal semisimple rank such that $L(F)\sigma_q$ contains an $H(F)$-conjugate of $s$. Without loss of generality, we may assume that
$T\subset L$. By \cref{lem:standard-in-levi}, there exist
$y\in H(F)$ and a straight element
\[
  s_1=n_1\sigma_q,\qquad n_1\in N_{0,L},
\]
such that $y^{-1}sy=s_1$.

Applying \cref{lem:NZ-newton-levi} to $L$,
we get a Levi subgroup
$L_0\subset L$ and an element $x_0\in L(F)$ such that
\[
  x_0^{-1}s_1x_0=s_0h_0=h_0s_0,
  \qquad
  s_0=n_0\sigma_q,
\]
with
\[
  n_0\in N_{0,L_0},\qquad p_{L_0}(s_0)\in\Omega_{L_0},
  \qquad h_0\in \mathcal{U}_{L_0}.
\]
Since $s_0h_0$ is conjugate to $s_1$, it is standard in
$L(F)\sigma_q$. The uniqueness of the standard--pro-unipotent
decomposition \cite[Theorem~1.1(a)]{NZ} therefore gives $h_0=1$.
Since $L_0$ is also a Levi subgroup of $H$, the minimality of $L$ forces $L_0=L$. The remaining assertions now follow by the minimality of $L$ and from \cref{lem:elliptic-superbasic}.
\end{proof}

\subsection{Partial Coxeter cell}

\begin{theorem}\label{thm:partial-cell}
Let $G$ be connected reductive and let $A\in G(F)$. There exists a
subset $J\subset\Delta$ and $g\in G(F)$ such that
\[
  A^g\in B_-(F)\dot c_JB_-(F).
\]
Moreover, every connected component of the Dynkin subdiagram associated with $J$ is of type $A$.
\end{theorem}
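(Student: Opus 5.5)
We pass to the Nie--Zhou convention and chain the reductions of this section. Put $A'=A^{-1}$ and apply \cref{lem:NZ-newton-levi} with $H=G$ to the $G(F)$-conjugacy class of $A'\sigma_q$. This gives a Levi subgroup $M\supset T$ and a representative $sh=hs$, where $s=n\sigma_q$, $n\in N_{0,M}$, $p_M(s)\in\Omega_M$, and $h\in\mathcal U_M$.

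\textbf{Step 1: push $h$ into a unipotent radical.} We view $h$ as a unipotent element of $Z_s$; pro-unipotent elements commuting with $s$ lie in the reductive group $Z_s$ and are unipotent there. Apply \cref{prop:parabolic}. After conjugating by $Z_s^\circ$, which fixes $s$, we obtain a parabolic $P=M_1U\subset M$ with $n\in N_{M_1}(T)(F)$ and $h\in U(F)$. By \cref{lem:inherit-standard}, $n\in N_{0,M_1}$, so $s$ is standard in $M_1(F)\sigma_q$.

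\textbf{Step 2: make $s$ superbasic.} Apply \cref{prop:superbasic-reduction} to $s$ inside $H=M_1$. This gives a Levi subgroup $L\subset M_1$ containing $T$, an element $x\in M_1(F)$, and $n_L\in N_{0,L}$ with $x^{-1}n\sigma_q(x)=n_L$. Moreover every component of $L^{\mathrm{der}}$ is of type $A$, and the finite Weyl part of $p_L(n_L\sigma_q)$ is $W_L$-conjugate to a product of Coxeter elements. Conjugating $sh$ by $x$ gives $s_L h'$ with $s_L=n_L\sigma_q$ and $h'=x^{-1}hx\in U(F)$, since $M_1$ normalizes $U$.

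Now $n_L\in N_L(T)(F)$ equals $t^\mu\dot w a$ with $a\in T(\mathbb C)$. Conjugating by a constant lift in $N_L(T)(\mathbb C)$ replaces $w$ by a product of Coxeter elements of the type-$A$ components. We choose the conjugation so that $w$ becomes exactly $c_J$, where $J$ is the set of simple roots of a standard Levi $W$-conjugate to $L$. This uses the facts that any two Coxeter elements are conjugate and that the $c_J$ in \cref{cJ} is a Coxeter element of $W_J$.

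\textbf{Step 3: reach the partial Coxeter cell.} After a further constant Weyl conjugation taking $L$ to the standard Levi $M_J$, we arrange $P$ and $B_-$ to be compatible. Concretely, we replace $U$ by a unipotent radical contained in $U_-$, choosing the Weyl conjugation so that the roots of $U$ become negative while $L$ is carried to $M_J\supset T$. This is possible because $P$ contains the Levi $L$, and we may pick $B_-\cap L$ first and then extend it inside $P$ using the opposite of $P$ if necessary.

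The representative is now $n_L\sigma_q\cdot h'$. Translating back via \eqref{eq:conventions}, some $A^g$ equals $(n_Lh')^{-1}$ up to the $\sigma_q$ bookkeeping, namely $h'^{-1}n_L^{-1}$. Here $t^\mu$ and $a$ lie in $T(F)\subset B_-(F)$, and $h'\in U_-(F)\subset B_-(F)$. Hence $A^g\in B_-(F)\dot c_J^{\pm1}B_-(F)$. For the inverse, note $B_-\dot c_J^{-1}B_-=(B_-\dot c_JB_-)^{-1}$, and $c_J^{-1}$ is again a Coxeter element of $W_J$. So we first conjugate $w$ to $c_J^{-1}$ instead of $c_J$, and then inversion lands in the cell of $\dot c_J$. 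Changes of lift only alter the element by factors in $T(\mathbb C)$. The type-$A$ claim is inherited from $L$.

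\textbf{Main obstacle.} The delicate point is the simultaneous normalization in Steps 2--3. A single constant Weyl conjugation must send $L$ to a standard Levi $M_J$, make the finite Weyl part exactly $c_J^{-1}$, and put $U$ inside $U_-$. The order matters. First conjugate $P$ to a standard parabolic with respect to $B_-$; this puts $L=M_J$ and $U\subset U_-$. Then conjugate only by $W_J$, which preserves both $M_J$ and $U$, to adjust the Weyl part. This works because the Coxeter-conjugacy statement of \cref{prop:superbasic-reduction} is within $W_L$.
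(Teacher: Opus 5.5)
Your proposal is correct and follows essentially the same route as the paper's proof: the Nie--Zhou representative from \cref{lem:NZ-newton-levi}, then \cref{prop:parabolic} with \cref{lem:inherit-standard}, then \cref{prop:superbasic-reduction} inside $M_1$ (with $x\in M_1(F)$ normalizing $U$), then a Borel subgroup containing $T$ and $U$ for which $L$ is standard, and finally constant Weyl conjugations that make the finite part a Coxeter element of $W_J$. The differences are cosmetic---you standardize $L$ first and then conjugate $w$ to $c_J^{-1}$ inside $W_J$, whereas the paper conjugates the finite part of $n_L^{-1}$ to $c'_{J'}$ for a transported ordering and then moves $B'_-$ to $B_-$---but note that since $P$ is only a parabolic of $M$, your compatible Borel must be built as $B_{1,-}U\,U_Q$ with $Q=MU_Q$ a parabolic of $G$, and the opposite of $P$ is never needed.
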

\begin{proof}
Put $C=A^{-1}$. By \cref{lem:NZ-newton-levi}, the
$G(F)$-conjugacy class of $C\sigma_q$ contains a representative
\[
  sh=hs,
  \qquad
  s=n\sigma_q,
\]
in $M(F)\sigma_q$ for a Levi subgroup $M\subset G$ containing $T$, with the data satisfying \eqref{eq:NZ-data}. In addition, by \cite[Section~4]{NZ}, the pro-unipotent elements of the possibly disconnected reductive group $M(F)^s$ are precisely its unipotent elements.
Therefore, $h$ is unipotent in $M(F)^s$. 

By \cref{prop:parabolic}, after replacing
$C\sigma_q$ by a $G(F)$-conjugate, there exists a parabolic subgroup $  P=M_1U\subset M$ such that
\[
  C\sigma_q=(n\sigma_q)h=h(n\sigma_q),
  \qquad
  n\in N_{M_1}(T)(F),
  \qquad
  h\in U(F).
\]
Since $n\in N_{0,M}$, \cref{lem:inherit-standard} gives
$n\in N_{0,M_1}$, and hence $n\sigma_q$ is standard in
$M_1(F)\sigma_q$.
By
\eqref{eq:conventions}, the $q$-conjugacy class of $A$ therefore has the
representative
\begin{equation}\label{eq:separated-A}
  A_0=C^{-1}=(hn)^{-1}=n^{-1}u,
  \qquad
  u=h^{-1}\in U(F).
\end{equation}
Applying \cref{prop:superbasic-reduction} to $M_1$, we obtain a Levi
subgroup $L\subset M_1$ containing $T$, an element
$x\in M_1(F)$, and $n_L\in N_{0,L}$ such that
\begin{equation}\label{eq:nL-conjugation}
  x^{-1}n\sigma_q(x)=n_L,
\end{equation}
where every connected component of the Dynkin diagram of
$L^{\mathrm{der}}$ is of type $A$, and the finite Weyl component
$w_L$ of $p_L(n_L\sigma_q)$ is $W_L$-conjugate to a product of
Coxeter elements.

Next, we claim that we can find a Borel subgroup $B'_{-}$ of $G$ that is compatible with the Levi subgroups $L\subset M_1\subset M\subset G$ and contains $U$. Indeed, choose a Borel subgroup $B_{1,-}\subset M_1$ containing $T$ for which $L$
is standard. Then $B'_{M,-}=B_{1,-}U$ is a Borel subgroup of $M$.  Choose a parabolic subgroup $Q=MU_Q\subset G$ with Levi factor $M$ and set $ B'_-:=B'_{M,-}U_Q.$ Then $B'_-$ is a Borel subgroup of $G$ containing $T$, $ B'_-\cap M=B'_{M,-},$ and $M$ is standard with respect to $B'_-.$   Choose $h_0\in N_G(T)(\mathbb C)$ with $ h_0B'_-h_0^{-1}=B_-,$ and let $\overline h_0\in W_G$ be the image of $h_0$.  Let $B'_+$ be the Borel subgroup opposite to $B'_-$ and containing $T$.  Transport the fixed order of $\Delta$ and the chosen rank-one representatives by conjugation
with $h_0^{-1}$.  This gives the ordered simple system $\Delta'$ determined by $B'_+$, with the corresponding Coxeter element $c'$ and the representative
$\dot c'$, such that $h_0\dot c'h_0^{-1}=\dot c.$

Let $J'\subset\Delta'$ denote the subset of simple roots corresponding to $L$.

Since all Coxeter elements are conjugate in an irreducible finite Weyl group \cite[Section~3.16]{HumphreysCoxeter} and the finite Weyl component $w_L^{-1}$ of $n_L^{-1}$ is
$W_L$-conjugate to a product of Coxeter elements, there exists
$z\in N_L(T)(\mathbb C)$ such that the image of
$z^{-1}n_L^{-1}z$ in $\widetilde W_L$ is $ t^\mu c'_{J'}$
for some $\mu\in X_*(T)$.

Thus, we can write
\[
  z^{-1}n_L^{-1}z=t^\mu\dot c'_{J'}t_0
\]
for some $t_0\in T(\mathcal O)$, which gives
\begin{equation}\label{eq:normalizer-coxeter}
  z^{-1}n_L^{-1}z
  \in T(F)\dot c'_{J'}
  \subset B'_-(F)\dot c'_{J'}B'_-(F).
\end{equation}
After $q$-conjugating \eqref{eq:separated-A} with $(xz)^{-1}$, we get
\begin{align*}
  A_0^{(xz)^{-1}}
  &=\sigma_q((xz)^{-1})n^{-1}u(xz)\\
  &=\bigl(z^{-1}n_L^{-1}z\bigr)
    \bigl(z^{-1}x^{-1}uxz\bigr).
\end{align*}
Since $x,z\in M_1(F)$ and $M_1$ normalizes $U$,
the second factor belongs to $U(F)\subset B'_-(F)$.
Together with \eqref{eq:normalizer-coxeter}, we get a representative of the $q$-conjugacy class of $A$ in
$B'_-(F)\dot c'_{J'}B'_-(F)$.

Finally, let $ J:=\overline h_0(J')\subset\Delta.$
Then the connected components of the Dynkin diagram associated with $J$ are of type $A$ because the same is true for $J'$. Moreover,
\[
  h_0\bigl(B'_-(F)\dot c'_{J'}B'_-(F)\bigr)h_0^{-1}
  =B_-(F)\dot c_JB_-(F).
\]
Thus, the $q$-conjugacy class of $A$ meets $ B_-(F)\dot c_JB_-(F).$
\end{proof}
\begin{remark}
    The fact that the Dynkin subdiagram associated with $J$ has connected components of type $A$ is not used in the proof of Theorem~\ref{thm:main-intro}; only the existence of a representative in a partial Coxeter cell is needed.
\end{remark}
\section{From partial to full Coxeter cell}\label{sec:promotion}
Recall $\dot c_J$ from Notation~\ref{cJ}.
Throughout this section, $A\in G(F)$ is arbitrary. The goal of this section is to prove the following result.
\begin{theorem}\label{thm:promotion-intro}
If the $q$-conjugacy class of $A\in G(F)$ meets
$B_-(F)\dot c_JB_-(F)$ for some $J\subset\Delta$, then it
meets $B_-(F)\dot cB_-(F)$.
\end{theorem}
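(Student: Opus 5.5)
We follow the strategy announced in the introduction: first show that the $q$-twisted Lang map $\mathcal L_C(k)=C^{-1}\sigma_q(k)Ck^{-1}$ is quasi-open at $k=1$, then exhibit small multiplicative perturbations $Cv_n$ of $C$ lying in the full Coxeter cell, and finally realise one such perturbation as a $q$-gauge transform of $C$. Let $C\in B_-(F)\dot c_JB_-(F)$ be a representative of the $q$-conjugacy class of $A$. Write $C=b_1\dot c_Jb_2$ with $b_1,b_2\in B_-(F)$.

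\textbf{Step 1: approximation by full Coxeter elements.}
We insert the simple reflections missing from $c_J$. Write $c=s_1\cdots s_r$, and for $i\notin J$ put
\[
v_n:=b_2^{-1}\,w_n\,b_2,
\]
where $w_n$ is obtained from the reduced word as follows. Each position $i\notin J$ is filled by the factor $x_i(t^n)$, and each position $i\in J$ is left as $\dot s_i$. The result is $b_1\dot c_J b_2 v_n=b_1 w_n' b_2$, where $w_n'$ is the ordered product of $\dot s_i$ for $i\in J$ and $x_i(t^n)$ for $i\notin J$. By the rank-one identity \eqref{eq:rank-one}, each $x_i(t^n)$ lies in $B_-\dot s_iB_-$, so \eqref{eq:bruhat-product} places $w_n'$ in $B_-(F)\dot cB_-(F)$.

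The bookkeeping is to write $\dot c_J$ as the interleaved product and to commute the inserted factors past the $\dot s_j$. It is cleaner to define $v_n$ directly by
\[
Cv_n:=b_1w_n'b_2,\qquad\text{that is,}\qquad v_n=C^{-1}b_1w_n'b_2 .
\]
We must check that $v_n\to1$ $t$-adically. This holds because $w_n'\to\dot c_J$ as $n\to\infty$, since $x_i(t^n)\to1$. This is Lemma~\ref{lem:coxeter-approximation}.

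\textbf{Step 2: linearisation.}
Via the exponential map on a small $t$-adic neighbourhood (say $t^N\mathfrak g(\mathcal O)$, with $N$ large), the differential of $\mathcal L_C$ at $1$ is
\[
T:=\Ad(C^{-1})\circ\sigma_q-\Id .
\]
We construct a continuous right inverse $R$ of $T$ on $t^{N}\mathfrak g(\mathcal O)$ with bounded loss of valuation: $R(t^{m}\mathfrak g(\mathcal O))\subset t^{m-d}\mathfrak g(\mathcal O)$ for a fixed $d$. Choose $a$ with $\Ad(C^{\pm1})t^m\mathfrak g(\mathcal O)\subset t^{m-a}\mathfrak g(\mathcal O)$. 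Decompose $\mathfrak g(F)$ into $T$-weight spaces for $\mathfrak t$ and root spaces.

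The key point is that $\sigma_q$ acts on $t^m X$ by $q^m$, and $q$ is not a root of unity. Solving $TY=Z$ degree by degree therefore amounts to inverting $q^m\Ad(C^{-1})_{\text{lead}}-\Id$. This is invertible for $m$ large, once one passes to a finite-dimensional $\mathbb C$-vector space on which $\Ad(C^{-1})$ acts semilinearly; there, $\Ad(C^{-1})\sigma_q$ is a $q$-difference module, and by its slope decomposition the operator minus the identity is surjective with bounded loss.

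I expect this step to be the main obstacle, since $C$ is not integral. A concrete route is to view $(\mathfrak g(F),\Ad(C^{-1})\sigma_q)$ as a $q$-difference module over $F$ and use the Adams/Birkhoff--Guenther normal form over $F$. Each slope piece then has a lattice on which $\Phi-\Id$ is bijective in high degree: for nonzero slope, $\Phi$ or $\Phi^{-1}$ is $t$-adically contracting and a geometric series inverts $\Phi-\Id$; for slope zero, invertibility follows from $q^m\neq\lambda$ for large $m$. This gives $R$ with a uniform loss $d$.

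\textbf{Step 3: fixed point argument and conclusion.}
Given $v$ close to $1$, we solve $\mathcal L_C(\exp Y)=v$ by the iteration
\[
Y\mapsto R\bigl(\log v-Q(Y)\bigr),
\]
where $Q$ is the higher-order remainder of $\log\mathcal L_C(\exp Y)$. By the Baker--Campbell--Hausdorff formula, $Q$ is at least quadratic, so it gains valuation: $Q(t^m\cdot)\subset t^{2m-a'}$. For $m$ large this gain beats the loss $d$, so the map is a $t$-adic contraction on $t^{m-d}\mathfrak g(\mathcal O)$. Completeness then gives a fixed point, which is Theorem~\ref{thm:lang-open}.

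Applying this to $v=v_n$ for $n\gg0$ yields $k$ with $\mathcal L_C(k)=v_n$, i.e.\ $C^k=\sigma_q(k)Ck^{-1}=Cv_n\in B_-(F)\dot cB_-(F)$. This proves the theorem.
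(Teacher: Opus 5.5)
Your proposal is correct and follows the paper's route. It uses the same approximating sequence $v_n=C^{-1}b_1w_n'b_2=b_2^{-1}\dot c_J^{-1}w_n'b_2$; your first definition $v_n=b_2^{-1}w_nb_2$ does not tend to $1$, but your corrected one is the paper's. It builds the same right inverse of $\Ad(C^{-1})\circ\sigma_q-\Id$ from the slope decomposition of $(\mathfrak g(F),\Ad(C^{-1})\circ\sigma_q)$, and proves quasi-openness of $\mathcal L_C$ by the same Baker--Campbell--Hausdorff contraction argument.

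When you write it up, make three adjustments. Drop the heuristic about a ``leading coefficient'' of $\Ad(C^{-1})$, which is meaningless for non-integral $C$. Note that for a slope $r/s$ with $s>1$ it is $\Phi^{s}$, not $\Phi$, that shifts valuation by $r$; the series $-\sum_{j\ge0}\Phi^j$ (or $\sum_{j\ge1}\Phi^{-j}$) still converges with bounded loss. Finally, transfer the estimate from the slope-adapted lattice to $\mathfrak g(\mathcal O)$ by a lattice comparison.
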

\subsection{The $q$-twisted Lang map}\label{sec:lang}
Motivated by the operator
$q\Ad_{a_0^{-1}}-\Id$ considered in \cite[Section~2]{BG}, we introduce
the following $q$-twisted version of the Lang map:
\begin{equation}\label{eq:lang-map}
  \mathcal{L}_A:G(F)\longrightarrow G(F),\qquad
  \mathcal{L}_A(k)=A^{-1}\sigma_q(k)Ak^{-1}.
\end{equation}
Thus,
\begin{equation}\label{eq:gauge-lang}
  A^k=A\mathcal{L}_A(k).
\end{equation}
In this subsection, we show that $\mathcal L_A$ is quasi-open near $1$, that is, the image of every open neighborhood of $1$ contains an open neighborhood of $1$.

Recall from \cite{Liu} that a $q$-difference module over $F$ is a finite-dimensional $F$-vector
space $V$ equipped with a bijective $\sigma_q$-semilinear map $\Phi:V\rightarrow V$, that is, 
\begin{align}
    \Phi(f(t)v)=f(qt)\Phi(v), \qquad f(t)\in F, v\in V.
\end{align}
Our first lemma states that $\Phi-\operatorname{Id}$ admits a right inverse satisfying a uniform $t$-adic estimate (cf. \cite[Lemma~5.1.1]{GHKR1}).
\begin{lemma}\label{lem:right-inverse-lattice}
Let $(V,\Phi)$ be a $q$-difference module and let $\Lambda\subset V$ be an
$\mathcal{O}$-lattice.  Then there exist integers $N_0\in\mathbb{Z}$ and $l\in\mathbb{Z}_{\geq0}$ and a continuous
$\mathbb{C}$-linear map
$S:t^{N_0}\Lambda\longrightarrow V$ such that
\begin{equation}\label{eq:right-inverse}
  (\Phi-\operatorname{Id})S=\operatorname{Id}
\end{equation}
and
\begin{equation}
  S(t^m\Lambda)\subset t^{m-l}\Lambda,
  \qquad m\geq N_0.
\end{equation}

\end{lemma}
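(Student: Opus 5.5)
We would prove the lemma by a contraction argument relative to the lattice $\Lambda$, splitting off the part of $\Phi$ that is small from the part that is large. Since any two lattices are commensurable, we are free to replace $\Lambda$ by a convenient lattice at the cost of changing $l$ and $N_0$. Choose an $F$-basis of $V$ and write $\Phi(v)=M\sigma_q(v)$ with $M\in GL_d(F)$, where $\sigma_q$ acts coordinatewise. By rescaling, we may assume $\Lambda=\mathcal O^d$. Let $a\ge 0$ be such that $M,M^{-1}\in t^{-a}\mathrm{Mat}_d(\mathcal O)$. Then
\[
\Phi(t^m\Lambda)\subset t^{m-a}\Lambda,\qquad \Phi^{-1}(t^m\Lambda)\subset t^{m-a}\Lambda .
\]
This crude bound is not enough, because the operator $\Phi$ can expand and contract.

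The first approach we would try is the one from the $q$-difference theory of the slope filtration. Over $F$ with $q$ not a root of unity, a $q$-difference module has a decomposition (the Adams/Sauloy slope theory; see \cite{Liu}, \cite{DV}) after extending to $F_e=\mathbb C((t^{1/e}))$, namely $V\otimes F_e=\bigoplus_\mu V_\mu$. Each pure piece has a basis in which $\Phi=t^{\mu}M_\mu(t)\sigma_q$ with $M_\mu\in GL(\mathcal O_e)$.

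To avoid quoting the full classification, we would use the weaker Newton-polygon splitting into three parts, $V=V_{<0}\oplus V_0\oplus V_{>0}$. On each part we invert $\Phi-\operatorname{Id}$ by a separate series.

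\textbf{Positive slopes.} On $V_{>0}$ we may assume $\Phi(t^m\Lambda_+)\subset t^{m+1}\Lambda_+$ for a suitable lattice $\Lambda_+$, after passing to a power of $t^{1/e}$ and possibly replacing $\Phi$ by an iterate. Then
\[
S=-\sum_{k\ge0}\Phi^k
\]
converges $t$-adically and satisfies $(\Phi-\operatorname{Id})S=\operatorname{Id}$, with no loss of valuation.

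\textbf{Negative slopes.} Symmetrically, on $V_{<0}$ we take
\[
S=\sum_{k\ge1}\Phi^{-k},
\]
which converges because $\Phi^{-1}$ contracts.

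\textbf{Slope zero.} This is the delicate case. There is a lattice $\Lambda_0$ with $\Phi(\Lambda_0)=\Lambda_0$, so $\Phi=M_0(t)\sigma_q$ with $M_0\in GL(\mathcal O)$. Working degree by degree as in the paper's congruence analysis, the operator on $t^m\Lambda_0/t^{m+1}\Lambda_0$ is $q^mM_0(0)-\operatorname{Id}$. This is invertible except for finitely many $m$, since the eigenvalues of $M_0(0)$ are fixed and $q^m\to$ distinct values. Hence for $m\ge N_0$ with $N_0$ beyond all resonances, we solve successively coefficient by coefficient. This gives a continuous $S$ on $t^{N_0}\Lambda_0$ with $S(t^m\Lambda_0)\subset t^m\Lambda_0$; the inverses $(q^mM_0(0)-1)^{-1}$ are uniformly bounded in $m$, and each correction only affects higher degrees.

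To finish, we would descend from $F_e$ to $F$: $S$ can be made Galois-equivariant by averaging over $\mu_e$, since that action commutes with $\Phi$ up to the obvious twist. Alternatively, we note that the splitting already exists over $F$. Comparing each $\Lambda_\bullet$ with $\Lambda$ then yields the uniform shift $l$.

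The main obstacle is justifying the splitting into slope parts over $F$ together with lattices adapted to each part. If citing \cite{Liu} for this is unavailable, we would instead use a cyclic-vector or Newton-polygon argument via \cite[Lemma~1.3.1]{DV}. Having a right inverse rather than a two-sided inverse is what lets us discard the resonant finite-dimensional obstructions by restricting the domain to $t^{N_0}\Lambda$.
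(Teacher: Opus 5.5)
Your proposal is correct and is essentially the paper's argument. The paper splits $V$ into summands $J_d\otimes V_{r,s}(\lambda)$ using Liu's classification, which already holds over $F$, so your detour through $F_e$ with Galois averaging is unnecessary. On nonzero slopes it uses $S=Q_s(R-\operatorname{Id})^{-1}$, which expands to exactly your series $-\sum_{k\ge0}\Phi^k$ or $\sum_{k\ge1}\Phi^{-k}$; these converge for $\Phi$ itself once an iterate contracts, at the cost of a bounded loss of valuation, so you should not literally replace $\Phi$ by $\Phi^s$. On slope zero it solves coefficientwise past the finitely many resonances, and it finishes by comparing lattices.

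One side remark of yours is wrong but harmless: $(q^mM_0(0)-\operatorname{Id})^{-1}$ need not be uniformly bounded in $m$ when $|q|=1$. No such bound is needed, because $t$-adic continuity of your triangular recursion only uses that $x_m$ depends on $y_{N_0},\dots,y_m$.
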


\begin{proof}
 First, assume that the $q$-difference module $(V,\Phi)$ is indecomposable. By the classification of indecomposable $q$-difference modules \cite[Theorem~2.4]{Liu}, there exist integers $r,s$, a scalar $\lambda\in\mathbb{C}^{\times}$ and an integer $d\geq 1$, with $\gcd(r,s)=1$ and $s\geq1$ such that
\begin{align}
    (V,\Phi)\cong J_{d}\otimes V_{r,s}(\lambda).
\end{align}
More explicitly, there exists a basis $e_0,\ldots,e_{s-1}$ of $V_{r,s}(\lambda)$ such that
\[
  \Phi(e_j)=e_{j+1}\quad 0\leq j<s-1,
  \qquad
  \Phi(e_{s-1})=\lambda t^r e_0.
\]
On the factor $J_d$, $\Phi$ is represented by a constant unipotent Jordan
matrix. 
It is easy to see that $\Phi^s=t^rC\sigma_q^s$ for a constant invertible matrix $C$.

Let us first consider the case where $\Lambda$ is the standard $\mathcal O$-lattice generated by the corresponding basis of the tensor product.
Put 
\[
  R=\Phi^s, \qquad Q_s=\operatorname{Id}+\Phi+\cdots+\Phi^{s-1}.
\]
Then $R-\operatorname{Id}=(\Phi-\operatorname{Id})Q_s$. 

If $r\neq 0$, then either $R$ or $R^{-1}$ increases the valuation. Hence, $(R-\operatorname{Id})^{-1}$ exists, namely
\[
(R-\operatorname{Id})^{-1}=
\begin{cases}
    -\sum_{j\geq0}R^j & r>0\\
    \sum_{j\geq1}R^{-j} & r<0.
\end{cases}
\]
Hence, $S:=Q_s(R-\operatorname{Id})^{-1}$ is a right inverse to $\Phi-\operatorname{Id}$.
Since $\Phi^j(\Lambda)$ is an $\mathcal O$-lattice in $V$ for each $0\leq j\leq s-1$, there exists an integer $l_j\geq0$ such that $ \Phi^j(\Lambda)\subset t^{-l_j}\Lambda.$ Put $l=\max_{0\leq j\leq s-1}l_j.$ Since $\Phi$ is $\sigma_q$-semilinear, we have
\[
  \Phi^j(t^m\Lambda)
  =
  q^{jm}t^m\Phi^j(\Lambda)
  \subset t^{m-l}\Lambda,
  \qquad
  m\in\mathbb Z,
  \qquad 0\leq j\leq s-1.
\]
Thus, $Q_s(t^m\Lambda)\subset t^{m-l}\Lambda$. Combining this with the fact that $(R-\operatorname{Id})^{-1}$ above does not decrease the valuation, it follows
that
\[
  S(t^m\Lambda)\subset t^{m-l}\Lambda.
\]
It remains to consider $r=0$. In this case, $\Phi=C\sigma_q$.
If
$x=\sum_nx_nt^n$ and $y=\sum_ny_nt^n$, then
$(\Phi-\operatorname{Id})x=y$ is equivalent to
\begin{equation}\label{eq:zero-slope}
  (q^nC-\operatorname{Id})x_n=y_n
\end{equation}
for every $n$. Since $q$ is not a root of unity, the matrix $q^nC-\operatorname{Id}$ is non-invertible only for finitely many integers $n$.
Let $N_0$ be such that $q^nC-\Id$ is invertible for all $n\geq N_0$. Then
\eqref{eq:zero-slope} defines a coefficientwise right inverse to $\Phi-\Id$ preserving
valuation.

Now, in the case where $(V,\Phi)$ is not necessarily indecomposable and $\Lambda_0$ is the lattice generated by the tensor product basis as above of each indecomposable summand, we can take direct sums of the right inverses on each summand to obtain a right inverse $S_{0}$ to $\Phi-\Id$ such that $S_0$ is defined on $t^{N_{0}}\Lambda_0$ and $S_0(t^m\Lambda_0)\subset t^{m-l_0}\Lambda_0, m\geq N_0$ for some integers $N_0\in\mathbb{Z}$ and $l_0\in\mathbb{Z}_{\geq0}$.

For an arbitrary lattice $\Lambda$, choose integers $a,b\geq0$ such that
\[
  t^a\Lambda_0\subset\Lambda\subset t^{-b}\Lambda_0.
\] 
Now, if $y\in t^m\Lambda$, then
$y\in t^{m-b}\Lambda_0$ and hence for $m\geq b+N_0$, we have
\[
  S_0(y)\in t^{m-b-l_0}\Lambda_0
  \subset t^{m-(a+b+l_0)}\Lambda.
\]
Thus, the restriction of $S_0$ to $t^{b+N_0}\Lambda$ gives the required map and $a+b+l_0$ is the desired constant.
\end{proof}
It is known that $\exp$ induces a homeomorphism $\exp:t\mathfrak{g}(\mathcal{O})\rightarrow G_{1}$, where $G_1$ is the first congruence subgroup of $G(\mathcal{O})$. In the following lemma, we show that $\exp$ induces a homeomorphism from $t^{n}\mathfrak{g}(\mathcal{O})$ onto an open subgroup of $G(F)$ for every integer $n\geq1$. We will then use this lemma to pass from Lemma~\ref{lem:right-inverse-lattice} to the $q$-twisted Lang map $\mathcal{L}_A$.
\begin{lemma}\label{lem:formal-coordinates}
For every $n\geq 1$, the
exponential and the logarithm induce mutually inverse homeomorphisms
\[
\exp:t^n\mathfrak{g}(\mathcal{O})\xrightarrow{\sim}\mathcal G_n,
  \qquad
  \log:\mathcal G_n\xrightarrow{\sim}t^n\mathfrak{g}(\mathcal{O}),
\]
where $\mathcal G_n$ is an open subgroup of $G(F)$ contained in the
$n$-th congruence subgroup $G_n$ (see Section \ref{sec:rigidity} for the definition of $G_n$). 
\end{lemma}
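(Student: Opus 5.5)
Fix a faithful representation $\rho\colon G\hookrightarrow GL_V$ with $V=\mathbb C^d$, so that $\mathfrak g\subset\mathfrak{gl}_d$ and $G(F)\subset GL_d(F)$ is closed. The plan is to work with the matrix exponential and logarithm as power series, and to take $\mathcal G_n:=\exp(t^n\mathfrak g(\mathcal O))$. The difficulty is that $\sum_k X^k/k!$ need not converge $t$-adically for $X\in t^n\mathfrak{gl}_d(\mathcal O)$: there is no valuation gain from $1/k!$ over $\mathbb C$, but $X^k\in t^{nk}\mathfrak{gl}_d(\mathcal O)$, so the $t$-adic valuation of the $k$-th term is at least $nk\to\infty$. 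Hence $\exp$ converges coefficientwise in $t^n\mathfrak{gl}_d(\mathcal O)$ for $n\ge1$. Similarly $\log(1+Y)=\sum_{k\ge1}(-1)^{k+1}Y^k/k$ converges for $Y\in t^n\mathfrak{gl}_d(\mathcal O)$. The formal identities $\exp\log=\mathrm{id}$ and $\log\exp=\mathrm{id}$ hold, since they are identities of formal power series in one variable and each coefficient of $t^m$ involves only finitely many terms. Thus $\exp$ and $\log$ are mutually inverse bijections between $t^n\mathfrak{gl}_d(\mathcal O)$ and $1+t^n\mathfrak{gl}_d(\mathcal O)=GL_{d,n}$. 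Continuity in the $t$-adic topology is clear: modulo $t^{m}$, $\exp(X)$ depends only on $X$ modulo $t^m$ and involves only finitely many terms, and the same holds for $\log$.

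Next I restrict to $G$. The statement to prove is that $\exp(t^n\mathfrak g(\mathcal O))\subset G(F)$ and that $\log$ sends $G(F)\cap\exp(t^n\mathfrak{gl}_d(\mathcal O))$ into $\mathfrak g(F)$. For the first inclusion I use the standard argument: for $X\in t^n\mathfrak g(\mathcal O)$, $\exp(X)$ satisfies every polynomial equation defining $G$. For this I would reduce modulo $t^m$ for each $m$, work over the finite-dimensional nilpotent setting $\mathfrak g\otimes t^n\mathbb C[t]/t^m$, and apply the exponential of a nilpotent Lie algebra. Concretely, $\exp\colon \mathfrak g\otimes\mathfrak m\to G(R)$ for the Artinian ring $R=\mathbb C[t]/t^m$ with nilpotent ideal $\mathfrak m=t^nR$ is the algebraic group exponential, which lands in the kernel of $G(R)\to G(\mathbb C)$. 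This is the identification of the congruence kernel with the Lie algebra via $\exp$, i.e., the formal group of $G$. Passing to the limit over $m$ gives $\exp(X)\in G(\mathcal O)$.

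Conversely, take $g\in G(\mathcal O)\cap(1+t^n\mathfrak{gl}_d(\mathcal O))$. Modulo $t^m$, $g$ is a unipotent element of $G(R)$ lying in the congruence kernel, and its logarithm lies in $\mathfrak g\otimes\mathfrak m$. Hence $\log g\in t^n\mathfrak g(\mathcal O)$. This identifies $\mathcal G_n$ with $G(F)\cap(1+t^n\mathfrak{gl}_d(\mathcal O))$, which I expect equals the $n$-th congruence subgroup $G_n$, or at least lies inside it. In particular $\mathcal G_n$ is a subgroup, being an intersection of subgroups. It is open because $1+t^n\mathfrak{gl}_d(\mathcal O)$ is open in $GL_d(F)$ and $G(F)$ carries the subspace topology. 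The restricted maps are then mutually inverse homeomorphisms.

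The main obstacle is the algebraicity step showing that $\exp$ lands in $G$ and $\log$ lands in $\mathfrak g$. The paper works over $\mathbb C$, where $G$ is not assumed unipotent, so I would justify this through the truncations $R_m=\mathbb C[t]/t^m$. On $\ker(G(R_m)\to G(\mathbb C))$, which is a unipotent group scheme, the exponential is an algebraic isomorphism from its Lie algebra $\mathfrak g\otimes t R_m$, compatible with $\rho$. Compatibility of these isomorphisms as $m$ varies then yields the claim.
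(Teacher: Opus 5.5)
Your proof is correct, but it takes a different route from the paper's.

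\textbf{The paper's route.} The paper treats the case $n=1$ as a known black box: that $\exp\colon t\mathfrak g(\mathcal O)\to G_1$ is a homeomorphism with inverse $\log$. From this it argues as follows. Openness of $\mathcal G_n=\exp(t^n\mathfrak g(\mathcal O))$ comes from restricting to the open subset $t^n\mathfrak g(\mathcal O)\subset t\mathfrak g(\mathcal O)$. The inclusion $\mathcal G_n\subset G_n$ is read off from a matrix embedding. Closure under products comes from the BCH formula, since a degree-$k$ bracket of elements of $t^n\mathfrak g(\mathcal O)$ lies in $t^{nk}\mathfrak g(\mathcal O)$.

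\textbf{Your route.} You instead prove that black box. You build $\exp$ and $\log$ on $t^n\mathfrak{gl}_d(\mathcal O)$. You then pass to the truncations $R_m=\mathbb C[t]/t^m$ and use the unipotent group $\ker(G(R_m)\to G(\mathbb C))$, whose Lie algebra is $\mathfrak g\otimes tR_m$. This shows that $\exp$ maps $t^n\mathfrak g(\mathcal O)$ into $G(\mathcal O)$, and that $\log$ maps $G(\mathcal O)\cap(1+t^n\mathfrak{gl}_d(\mathcal O))$ into $\mathfrak g(\mathcal O)\cap t^n\mathfrak{gl}_d(\mathcal O)=t^n\mathfrak g(\mathcal O)$. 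The result is the sharper identification $\mathcal G_n=G(F)\cap(1+t^n\mathfrak{gl}_d(\mathcal O))$, so the subgroup and openness properties come for free and BCH is never needed.

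You can drop the hedge ``or at least lies inside it.'' Since $\rho$ is a closed embedding, $G(F)\cap GL_d(\mathcal O)=G(\mathcal O)$, and $G(\mathcal O/t^n\mathcal O)\to GL_d(\mathcal O/t^n\mathcal O)$ is injective. So this intersection is exactly $G_n$, i.e.\ $\mathcal G_n=G_n$.

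\textbf{Trade-off.} Your route relies on standard characteristic-zero facts: nilpotent elements of the Lie algebra of an algebraic subgroup of $GL_N(\mathbb C)$ exponentiate into that subgroup; logarithms of its unipotent elements lie in its Lie algebra; and the Lie algebra of the congruence kernel over $R_m$ is $\mathfrak g\otimes tR_m$. In exchange it is self-contained and yields the stronger equality. The paper's argument is shorter, but it rests entirely on the cited $n=1$ statement.
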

\begin{proof}
Since $\exp$ induces a homeomorphism $\exp:t\mathfrak{g}(\mathcal{O})\rightarrow G_{1}$ and $t^n\mathfrak g(\mathcal O)$ is open in
$t\mathfrak g(\mathcal O)$, $\mathcal G_n=\exp(t^n\mathfrak{g}(\mathcal{O}))$ is open in $G_1$, and hence in
$G(F)$.
Next, choose a closed embedding $\rho:G\hookrightarrow\operatorname{GL}(V)$ and let $V_{\mathcal O}=V\otimes_{\mathbb C}\mathcal O, E=\operatorname{End}_{\mathcal O}(V_{\mathcal O}).$
We regard $\mathfrak g(\mathcal{O}) \subset E.$
It is clear from this embedding that for $n\geq 1$ and $X\in t^n\mathfrak{g}(\mathcal{O})$, $\exp(X)\equiv1\pmod{t^n},$ and therefore $\mathcal G_n\subset G_n.$ 

We now show that $\mathcal G_n$ is a subgroup.
Since the multiplication in the image of $\exp$ is given by the BCH formula (\cite[Chapter~2, Section~6]{Bou}):
\begin{align}\label{eq:BCH}
  \operatorname{BCH}(X,Y)
  =
  X+Y+\frac12[X,Y]
  +\frac1{12}[X,[X,Y]]
  +\frac1{12}[Y,[Y,X]]
  +\cdots,
\end{align}
we have $\exp(X)\exp(Y)
  =\exp\!\left(\operatorname{BCH}(X,Y)\right)
  \in\mathcal G_n$ for $X,Y\in t^n\mathfrak g(\mathcal O)$. Also, $ \exp(X)^{-1}=\exp(-X)\in\mathcal G_n$.
This completes the proof of the lemma. 
\end{proof}
We can now prove that $\mathcal{L}_A$ is quasi-open near $1$, that is, the image of every open neighborhood of $1$ under the map $\mathcal{L}_{A}$ contains an open neighborhood at $1$. 
\begin{theorem}\label{thm:lang-open}
For every $t$-adic neighborhood $\mathcal W$ of $1$ in $G(F)$, the image
$\mathcal{L}_A(\mathcal W)$ contains a neighborhood of $1$.
\end{theorem}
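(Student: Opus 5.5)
Work in exponential coordinates and reduce the statement to a fixed-point problem on a small lattice in $\mathfrak g(F)$. Apply \cref{lem:right-inverse-lattice} to the $q$-difference module $V=\mathfrak g(F)$ with $\Phi=\Ad(A^{-1})\circ\sigma_q$; this is bijective and $\sigma_q$-semilinear because $\Ad(A^{-1})$ is $F$-linear and invertible. Take the lattice $\Lambda=\mathfrak g(\mathcal O)$. This gives $N_0$, $l$ and a continuous right inverse $S$ of $\Phi-\Id$ on $t^{N_0}\Lambda$ with $S(t^m\Lambda)\subset t^{m-l}\Lambda$. Since $\mathcal W$ contains some $\mathcal G_n$ by \cref{lem:formal-coordinates}, it suffices to show: for $n$ large, every $Y\in t^{n+l}\mathfrak g(\mathcal O)$ is of the form $\log\mathcal L_A(\exp X)$ with $X\in t^{n}\mathfrak g(\mathcal O)$. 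Then $\mathcal L_A(\mathcal G_n)\supset\mathcal G_{n+l}$.

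Next, expand $\mathcal L_A(\exp X)$. We have $\mathcal L_A(\exp X)=\exp(\Phi X)\exp(-X)$, because $A^{-1}\sigma_q(\exp X)A=\exp(\Ad(A^{-1})\sigma_q X)$. By BCH, $\log\mathcal L_A(\exp X)=\Phi X-X+R(X)$, where $R(X)$ consists of iterated brackets of $\Phi X$ and $X$. The essential issue is that $\Phi$ may lower valuation by a bounded amount. Fix $l'\ge0$ with $\Ad(A^{-1})(t^m\Lambda)\subset t^{m-l'}\Lambda$; $\sigma_q$ preserves valuation. If $X,X'\in t^n\Lambda$ with $n>l'$, then $\Phi X,\Phi X'\in t^{n-l'}\Lambda$ with $n-l'\ge1$, so BCH converges $t$-adically. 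The quadratic estimates give
\[
R(X)\in t^{2n-2l'}\Lambda,\qquad R(X)-R(X')\in t^{\,n-2l'+v}\Lambda\quad\text{if }X-X'\in t^{v}\Lambda .
\]
These hold because $\mathfrak g(\mathcal O)$ is closed under brackets and every term of $R$ has degree at least $2$.

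Now set up the fixed-point equation $X=S\bigl(Y-R(X)\bigr)$ on $t^{n}\Lambda$. Any solution satisfies $(\Phi-\Id)X=Y-R(X)$, so $\log\mathcal L_A(\exp X)=Y$. The map $T(X)=S(Y-R(X))$ sends $t^n\Lambda$ into itself provided $n+l\ge N_0$, $Y\in t^{n+l}\Lambda$, and $2n-2l'-l\ge n$. For the contraction, $T(X)-T(X')=S(R(X')-R(X))$ lies in $t^{\,v+(n-2l'-l)}\Lambda$, and $n-2l'-l\ge1$ once $n$ is large. So $T$ is a strict contraction for the $t$-adic metric on the complete space $t^n\Lambda$, and the Banach fixed point theorem gives $X$. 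Linearity of $S$ over $\mathbb C$ suffices here; $S$ is applied to $Y-R(X)\in t^{n+l}\Lambda$ with $n+l\ge N_0$, so it is always defined.

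The main obstacle is the bookkeeping in the BCH remainder estimate, in particular the valuation loss $l'$ from $\Ad(A^{-1})$, which is why $n$ must exceed $2l'+l$. I would handle it by working in a faithful representation $G\hookrightarrow GL(V)$, as in \cref{lem:formal-coordinates}, and bounding $\log(e^{P}e^{-X})-(P-X)$ there with $P=\Phi X$. Each BCH term of total degree $k\ge2$ has valuation at least $k(n-l')$, and the differences are handled multilinearly. The rational coefficients of BCH cause no trouble in characteristic zero.
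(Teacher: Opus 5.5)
Your proposal is correct and follows essentially the same route as the paper: apply \cref{lem:right-inverse-lattice} to $(\mathfrak g(F),\Ad(A^{-1})\circ\sigma_q)$ with $\Lambda=\mathfrak g(\mathcal O)$, write $\log\mathcal L_A(\exp X)$ via BCH as $(\Phi-\Id)X$ plus a remainder controlled by the valuation loss of $\Phi$, and solve $X=S(Y-R(X))$ by the contraction mapping principle on $t^n\mathfrak g(\mathcal O)$. Your constants $l,l'$ play the roles of the paper's $c,d$, and your conditions $n>2l'+l$ and $n+l\ge N_0$ match the paper's conditions $n>2d+c$, $n+c\ge N_0$, $2n-2d\ge N_0$; the paper's last condition is already implied by the first two.
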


\begin{proof}
On $\mathfrak{g}(F)$, put
\[
  \Phi_A=\operatorname{Ad}(A^{-1})\circ\sigma_q.
\]
Applying \cref{lem:right-inverse-lattice} to the $q$-difference module $(\mathfrak{g}(F),\Phi_A)$ and the lattice $\mathfrak{g}(\mathcal{O})$,
there exist $N_0\in\mathbb{Z},c\in\mathbb{Z}_{\geq 0}$ and a $\mathbb{C}$-linear map $S:t^{N_{0}}\mathfrak{g}(\mathcal{O})\rightarrow\mathfrak{g}(F)$ such that
\begin{equation}\label{eq:linear-estimate}
  (\Phi_A-\operatorname{Id})S=\operatorname{Id},
  \qquad
  S(t^m\mathfrak{g}(\mathcal{O}))\subset t^{m-c}\mathfrak{g}(\mathcal{O}),
  \quad m\geq N_0.
\end{equation}
We claim that 
\[
  \exp(t^{n+c}\mathfrak{g}(\mathcal{O}))
  \subset\mathcal{L}_A\bigl(\exp(t^n\mathfrak{g}(\mathcal{O}))\bigr)
  \subset\mathcal{L}_A(\mathcal W)
\]
for all sufficiently large $n$, which would prove the theorem. 

Since $\Phi_A$ is continuous, there exists $d\geq0$ such that
\begin{equation}\label{eq:phi-base}
  \Phi_A(t^d\mathfrak{g}(\mathcal{O}))\subset \mathfrak{g}(\mathcal{O}).
\end{equation}
Now $\sigma_q$-semilinearity of $\Phi_A$ gives
\begin{equation}\label{eq:phi-shift}
  \Phi_A(t^m\mathfrak{g}(\mathcal{O}))\subset t^{m-d}\mathfrak{g}(\mathcal{O})
\end{equation}
for all $m\in\mathbb Z$. Since $\mathcal{L}_{A}\circ\exp$ is continuous, $\mathcal{L}_A(\exp X)$ belongs to the domain of $\log$ for every $X\in t^n\mathfrak{g}(\mathcal{O})$ for all sufficiently large
$n$. For such $n$ we define
\[
\Psi_A(X)=\log\mathcal{L}_A(\exp X).
\] 
For every $Y\in t^{n+c}\mathfrak{g}(\mathcal{O})$, we would like to find $X\in t^{n}\mathfrak{g}(\mathcal{O})$ such that $\Psi_A(X)=Y$. 

Since $\sigma_q$ commutes with $\exp$, we have
\[
  \mathcal{L}_A(\exp X)=\exp(\Phi_AX)\exp(-X),
\]
and the BCH formula \eqref{eq:BCH} gives
\begin{equation}\label{eq:psi-expansion}
  \Psi_A(X)=(\Phi_A-\operatorname{Id})X+Q_A(X),
\end{equation}
where every Lie monomial occurring in $Q_A(X)$ has total degree at least two in
$X$ and $\Phi_AX$. It follows from \eqref{eq:phi-shift} that for $n$ large enough
\begin{equation}\label{eq:quadratic}
  Q_A(t^n\mathfrak{g}(\mathcal{O}))\subset t^{2n-2d}\mathfrak{g}(\mathcal{O}).
\end{equation}
For $Y\in\mathfrak g(F)$, define 
\[
  v_{\mathfrak g(\mathcal O)}(Y)
  :=
  \sup\bigl\{
    m\in\mathbb Z:
    Y\in t^m\mathfrak g(\mathcal O)
  \bigr\},
  \qquad
  v_{\mathfrak g(\mathcal O)}(0):=+\infty.
\]
Moreover, for $X,X'\in t^n\mathfrak{g}(\mathcal{O})$,
\begin{equation}\label{eq:lipschitz}
  v_{\mathfrak{g}(\mathcal{O})}\bigl(Q_A(X)-Q_A(X')\bigr)
  \geq v_{\mathfrak{g}(\mathcal{O})}(X-X')+n-2d.
\end{equation}
Indeed, every homogeneous Lie monomial that occurs in $Q_A(X)-Q_A(X')$ has one factor either $X-X'$ or $\Phi_A(X-X')$, while the remaining factors
have valuation at least $n-d$ \eqref{eq:phi-shift}.

Choose $n$ large enough so that $\exp(t^n\mathfrak{g}(\mathcal{O}))\subset\mathcal W$, $n>2d+c$, $ n+c\geq N_0,$ and $2n-2d\geq N_0$ so that $\Psi_A$ is defined on $t^n\mathfrak g(\mathcal O)$.

For $Y\in t^{n+c}\mathfrak{g}(\mathcal{O})$, define
\[
  T_Y(X)=S\bigl(Y-Q_A(X)\bigr),
  \qquad X\in t^n\mathfrak{g}(\mathcal{O}).
\]
Using \eqref{eq:linear-estimate}, \eqref{eq:quadratic} and the assumption that $Y\in t^{n+c}\mathfrak{g}(\mathcal{O})$, the map $T_Y$ sends $t^n\mathfrak{g}(\mathcal{O})$ to itself.
Furthermore, \eqref{eq:linear-estimate} and \eqref{eq:lipschitz} imply
\[
  v_{\mathfrak{g}(\mathcal{O})}\bigl(T_Y(X)-T_Y(X')\bigr)
  \geq v_{\mathfrak{g}(\mathcal{O})}(X-X')+n-2d-c.
\]
Thus, $T_Y$ is a contraction on the complete metric space
$t^n\mathfrak{g}(\mathcal{O})$. Let $X$ be its unique fixed point. Then \eqref{eq:linear-estimate} gives
\[
 (\Phi_A-\operatorname{Id})X=Y-Q_A(X),
\]
and \eqref{eq:psi-expansion} gives $\Psi_A(X)=Y$. This completes the proof of the theorem.
\end{proof}
\begin{remark}
\Cref{thm:lang-open} is closely analogous to the results of Hartl--Viehmann for the Frobenius-twisted conjugacy. More precisely, in the notation of \cite{HV}, for each fixed $b$, \cite[Lemma~10.2]{HV} shows that there exists an integer $c_b\geq0$ such that, for every $d\geq0$ and every $g\in I_{d+c_b}$, there is a $k\in I_d$ satisfying $bg=k^{-1}b\sigma^\ast(k).$ Moreover, \cite[Theorem~10.1]{HV} gives a uniform choice of the constant $c_b$ as $b$ varies in a bounded subset. ~\cref{thm:lang-open} establishes the corresponding local property for the automorphism $\sigma_q$.

\end{remark}

\subsection{Full Coxeter cell}
Let $C$ be a $q$-connection in a partial Coxeter cell. We will apply the quasi-openness of $\mathcal{L}_C$ to the following sequence.
\begin{lemma}\label{lem:coxeter-approximation}
Let $J\subset\Delta$ and let
$C\in B_-(F)\dot c_JB_-(F)$.  Then there exists a sequence
$(v_n)_{n\in\mathbb{N}}$ in $G(F)$ such that
\[
  v_n\longrightarrow 1,
  \qquad
  Cv_n\in B_-(F)\dot cB_-(F).
\]
\end{lemma}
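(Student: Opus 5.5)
Write $C=b_1\dot c_Jb_2$ with $b_1,b_2\in B_-(F)$. The idea, already sketched in the introduction, is to insert factors $x_i(t^n)$ in the slots of $c$ corresponding to the simple roots $\alpha_i\notin J$. We will not multiply $C$ directly on the right by $x_i(t^n)$ (the factor $b_2$ would spoil things). Instead we first build a sequence $w_n\to1$ with $\dot c_J w_n\in B_-(F)\dot cB_-(F)$, and then set $v_n:=b_2^{-1}w_nb_2$. Since conjugation by the fixed element $b_2$ is continuous on $G(F)$, we get $v_n\to1$. Moreover
\[
Cv_n=b_1\dot c_Jw_nb_2\in b_1B_-(F)\dot cB_-(F)b_2=B_-(F)\dot cB_-(F).
\]

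To construct $w_n$, write $c=s_1\cdots s_r$ and let $i_1<\cdots<i_k$ be the indices with $\alpha_{i_j}\in J$, so $\dot c_J=\dot s_{i_1}\cdots\dot s_{i_k}$. For each index $i$, let $u_i\in G(F)$ be $\dot s_i$ if $\alpha_i\in J$ and $x_i(t^n)$ otherwise. By \eqref{eq:rank-one}, $x_i(t^n)\in B_-(F)\dot s_iB_-(F)$, so by \eqref{eq:bruhat-product}
\[
u_1u_2\cdots u_r\in (B_-\dot s_1B_-)\cdots(B_-\dot s_rB_-)=B_-(F)\dot cB_-(F),
\]
where \eqref{eq:bruhat-product} is applied over the field $F$ (the Bruhat decomposition and the reduced-word multiplicativity hold for $G(F)$). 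We then define
\[
w_n:=\dot c_J^{-1}u_1\cdots u_r .
\]
This gives $\dot c_Jw_n\in B_-(F)\dot cB_-(F)$.

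It remains to check that $w_n\to1$. Move each $x_i(t^n)$ with $\alpha_i\notin J$ to the right past the $\dot s_{i_j}$ with $i_j>i$. Conjugating a root subgroup element by the constant element $\dot s_{i_j}^{-1}$ gives an element of another root subgroup with the same parameter up to a sign, that is, of the form $x_\beta(\pm t^n)$. Hence
\[
u_1\cdots u_r=\dot c_J\prod_{\alpha_i\notin J}x_{\beta_i}(\pm t^n)
\]
for certain roots $\beta_i$. So $w_n$ is a finite product of root subgroup elements with parameter $\pm t^n$. Each of these factors tends to $1$ in the $t$-adic topology, and so does their product. Equivalently, $w_n$ is a product of constant conjugates of elements of $G_n$, and $G_n$ is normal in $G(\mathcal O)$, so $w_n\in G_n\to1$.

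I expect the main obstacle to be the bookkeeping in this last step: one has to confirm that the multiplicativity \eqref{eq:bruhat-product} holds for $F$-points, and that the conjugation by $\dot s_{i_j}$ keeps everything inside congruence subgroups. The cleanest route for the second point is the observation above: each $\dot s_{i_j}$ lies in $G(\mathbb C)\subset G(\mathcal O)$ and normalizes $G_n$.
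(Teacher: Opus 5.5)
Your proof is correct and is essentially the paper's argument: the paper likewise forms $\gamma_{J,n}=z_1(n)\cdots z_r(n)$ (with $\dot s_k$ in the $J$-slots and $x_k(t^n)$ elsewhere), sets $u_n=\dot c_J^{-1}\gamma_{J,n}$ and $v_n=b_2^{-1}u_nb_2$, and gets $u_n\to1$ directly from continuity of multiplication since $x_k(t^n)\to1$. Your extra justification is a valid, more explicit way to see that convergence: $w_n\in G_n$ because $G_n$ is normal in $G(\mathcal O)$ and all the factors lie in $G(\mathcal O)$.
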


\begin{proof}
For $1\leq k\leq r$, define
\[
  z_k(n)=
  \begin{cases}
    \dot s_{k},&\alpha_{k}\in J,\\
    x_{k}(t^n),&\alpha_{k}\notin J.
  \end{cases}
\]
By \eqref{eq:rank-one}, each $z_k(n)$ belongs to
$B_-(F)\dot s_{k}B_-(F)$.  Since the Coxeter word is reduced,
\[
  \gamma_{J,n}=z_1(n)\cdots z_r(n)
  \in B_-(F)\dot cB_-(F).
\]
As $n\to\infty$, $z_k(n)$ tends to $1$ whenever $\alpha_k\notin J$, so
$\gamma_{J,n}\to\dot c_J$.  Put
$u_n=\dot c_J^{-1}\gamma_{J,n}$, so $u_n\to 1$.

Choose a factorization $C=b_1\dot c_Jb_2$ with
$b_1,b_2\in B_-(F)$, and set
\[
  v_n=b_2^{-1}u_nb_2.
\]
Then $v_n\to 1$ and
\[
  Cv_n=b_1\gamma_{J,n}b_2
  \in B_-(F)\dot cB_-(F).
\]
\end{proof}

\begin{proof}[Proof of \cref{thm:promotion-intro}]
Start with a representative
$C\in B_-(F)\dot c_JB_-(F)$, and let $(v_n)_{n\in\mathbb{N}}$ be the
sequence from \cref{lem:coxeter-approximation}. By
\cref{thm:lang-open}, the image of $\mathcal{L}_C$ contains a neighborhood
of $1$. Since $v_n\rightarrow 1$, there exists
$k\in G(F)$ such that $\mathcal{L}_C(k)=v_n$ for some $n$. Using
\eqref{eq:gauge-lang}, we obtain
\[
  C^k=Cv_n\in B_-(F)\dot cB_-(F).
\]
\end{proof}
\begin{proof}[Proof of \cref{thm:main-intro}]
By \cref{thm:partial-cell}, the $q$-conjugacy class of $A$ meets
$B_-(F)\dot c_JB_-(F)$ for some $J\subset\Delta$. The conclusion now follows from \cref{thm:promotion-intro}.
\end{proof}
\section{Integral \texorpdfstring{$q$}{q}-connections}
\label{sec:rigidity}\label{app:altproof}
In this section, we give a proof of \cref{thm:integral-intro}. We begin with basic facts on congruence subgroups.
\subsection{Congruence subgroups}

For $m\geq1$, let
\[
  G_m=\ker\left(G\left(\mathcal{O}\right)\longrightarrow
  G\left(\mathcal{O}/t^m\mathcal{O}\right)\right).
\]
For $C,D\in G(\mathcal{O})$, the notation
$C\equiv D\pmod{t^m}$ means that they have the same image in
$G(\mathcal{O}/t^m\mathcal{O})$.

The following is standard; we include a proof for completeness.
\begin{proposition}
\label{lem:congruence}
For every $m\geq1$, there is a natural isomorphism of additive groups
\begin{equation}\label{eq:graded}
  G_m/G_{m+1}\simeq
  \mathfrak{g}\otimes_{\mathbb{C}}
  (t^m\mathcal{O}/t^{m+1}\mathcal{O})\simeq\mathfrak{g}.
\end{equation}
Under this identification, a class is written $1+t^mX$ modulo $t^{m+1}$,
and
\[
  \sigma_q(1+t^mX)\equiv1+q^mt^mX\pmod{t^{m+1}}.
\]
Moreover,
\begin{equation}\label{eq:completion}
  G(\mathcal{O})\simeq
  \varprojlim_nG(\mathcal{O}/t^n\mathcal{O}),
  \qquad
  \bigcap_{n\geq1}G_n=\{1\}.
\end{equation}
\end{proposition}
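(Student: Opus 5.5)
We will prove the three assertions in turn, working throughout with a closed embedding $\rho:G\hookrightarrow\operatorname{GL}(V)$, $V$ a finite-dimensional complex vector space. This makes $G$ a closed subscheme of $\operatorname{End}(V)$ cut out by polynomial equations. It also lets us identify $\mathfrak g$ with a subspace of $\operatorname{End}(V)$, and $G_m$ with the set of $g\in G(\mathcal O)$ satisfying $\rho(g)\equiv 1\pmod{t^m}$.

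\emph{The graded isomorphism.} For $g\in G_m$, write $\rho(g)=1+t^mX+O(t^{m+1})$ with $X\in\operatorname{End}(V)$.

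1. Show that $X\in\mathfrak g$. The map $\mathbb C[\epsilon]/\epsilon^2\to\mathcal O/t^{m+1}$, $\epsilon\mapsto t^m$, is a ring map. The reduction of $g$ modulo $t^{m+1}$ is a point of $G(\mathcal O/t^{m+1})$ of the form $1+t^mX$, so it comes from the $\mathbb C[\epsilon]/\epsilon^2$-point $1+\epsilon X$. This identification is justified because the defining equations of $G$, evaluated at $1+t^mX$, only involve terms up to first order in $t^m$, since $t^{2m}\equiv 0$ modulo $t^{m+1}$. Hence $1+\epsilon X\in G(\mathbb C[\epsilon]/\epsilon^2)$, which means exactly that $X\in\mathfrak g$.

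2. Show that $g\mapsto X$ is a homomorphism. We have $(1+t^mX)(1+t^mY)\equiv 1+t^m(X+Y)\pmod{t^{m+1}}$. The kernel of $g\mapsto X$ is $G_{m+1}$ by definition.

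3. Show surjectivity. Given $X\in\mathfrak g$, the element $\exp(t^mX)$ lies in $G(\mathcal O)$ and is congruent to $1+t^mX$. Alternatively, one can use smoothness of $G$ and Hensel lifting from $G(\mathcal O/t^{m+1})$.

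4. Compute the action of $\sigma_q$. Since $\sigma_q$ acts coefficientwise, $\sigma_q(1+t^mX+\cdots)=1+q^mt^mX+\cdots$, which gives the stated formula.

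\emph{The completion statement.} Since $G$ is affine, $G(\mathcal O)=\operatorname{Hom}(\mathbb C[G],\mathcal O)$. Because $\mathcal O=\varprojlim\mathcal O/t^n$, we have
\[
\operatorname{Hom}(\mathbb C[G],\mathcal O)=\varprojlim_n\operatorname{Hom}(\mathbb C[G],\mathcal O/t^n),
\]
which is the claimed isomorphism. The intersection $\bigcap_n G_n$ is the kernel of the map to the inverse limit, and is therefore trivial. Equivalently, an element $g$ with $\rho(g)-1\in\bigcap_n t^n\operatorname{End}(V_{\mathcal O})=0$ must satisfy $g=1$.

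\emph{Main obstacle.} The only delicate point is step 1, namely that the first-order term lands in $\mathfrak g$ rather than merely in $\operatorname{End}(V)$. We handle it through the dual-numbers description of the Lie algebra, using $2m\ge m+1$ for $m\ge1$. Naturality, meaning independence of the chosen embedding, follows from the intrinsic description $\mathfrak g=\ker\bigl(G(\mathbb C[\epsilon]/\epsilon^2)\to G(\mathbb C)\bigr)$.
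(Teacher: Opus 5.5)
Your proposal is correct and follows essentially the paper's argument. In both, a faithful representation is used, the vanishing of $t^{2m}$ modulo $t^{m+1}$ makes the first-order Taylor expansion of the defining equations exact (identifying $G_m/G_{m+1}$ with $\mathfrak g$), and $\operatorname{Hom}(\mathbb{C}[G],-)$ commuting with inverse limits gives the completion statement. The only variation is that you obtain surjectivity from the explicit lift $\exp(t^mX)$, whereas the paper lifts from $G(\mathcal O/t^{m+1}\mathcal O)$ using smoothness, which is your stated alternative.

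One cosmetic slip: $G$ is closed in $\operatorname{GL}(V)$ but need not be closed in $\operatorname{End}(V)$ (e.g.\ $\mathbb{G}_m=\operatorname{GL}_1\subset\operatorname{End}(\mathbb{C})$). Its equations should therefore be taken in $\mathbb{C}[\operatorname{End}(V)][\det^{-1}]$, or near the identity as the paper does; this changes nothing in the argument.
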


\begin{proof}
Set
\[
  R=\mathcal O/t^{m+1}\mathcal O,\qquad
  \overline R=\mathcal O/t^m\mathcal O,\qquad
  J=t^m\mathcal O/t^{m+1}\mathcal O.
\]
Then $J$ is the kernel of $R\to\overline R$ and $J^2=0$.
Choose a closed embedding $G\hookrightarrow\operatorname{GL}_N$.  Every
matrix in the kernel of
$\operatorname{GL}_N(R)\to\operatorname{GL}_N(\overline R)$ has a unique expression $1+Z$ with entries of $Z$ in $J$.  If $f$ is a defining equation of $G$ in a neighborhood of the identity, then $f(1)=0$ and the equality $J^2=0$ makes the first-order Taylor formula exact: $  f(1+Z)=df_1(Z)$.
It follows that $1+Z\in G(R)$ exactly when $Z\in T_1G\otimes_{\mathbb C}J
  =\mathfrak g\otimes_{\mathbb C}J.$
This gives a canonical bijection
\begin{equation}\label{eq:square-zero-kernel}
  \ker\bigl(G(R)\longrightarrow G(\overline R)\bigr)
  \simeq \mathfrak g\otimes_{\mathbb C}J.
\end{equation}
Since
$\mathcal O=\varprojlim_r\mathcal O/t^r\mathcal O$ and $G$ is affine, we have
\begin{equation}\label{eq:points-commute-with-limit}
  G(\mathcal O)
  =\operatorname{Hom}_{\mathbb C\text{-alg}}(\mathcal{O}(G),\mathcal O)
  \simeq
  \varprojlim_rG(\mathcal O/t^r\mathcal O).
\end{equation}

Moreover, since $G$ is smooth over $\mathbb C$, every reduction map
\[
  G(\mathcal O/t^{r+1}\mathcal O)
  \longrightarrow
  G(\mathcal O/t^r\mathcal O)
\]
is surjective. Therefore, for every $g_{m+1}\in G(R)$, we can choose compatible lifts $g_n\in G(\mathcal O/t^{n}\mathcal O)$ of $g_{m+1}$ for all $n\geq m+1$. By
\eqref{eq:points-commute-with-limit}, we get $g\in G(\mathcal O)$ that reduces to $g_{m+1}$ mod $t^{m+1}$. Hence $ G(\mathcal O)\longrightarrow G(R)$ is surjective.

Clearly, the image of $G_m$ in $G(R)$ equals the kernel in
\eqref{eq:square-zero-kernel}, and the kernel of $G_m\to G(R)$ is precisely
$G_{m+1}$. Hence,
\[
  G_m/G_{m+1}\simeq\mathfrak g\otimes_{\mathbb C}J.
\]
The other assertions are clear. 
\end{proof}
As a corollary of \cref{lem:congruence}, we have the following.
\begin{corollary}
    If $C\equiv D\pmod{t^m}$, then there exists a unique $Y\in\mathfrak{g}$ such that
\begin{equation}\label{eq:error}
  C\equiv D(1+t^mY)\pmod{t^{m+1}}.
\end{equation}
\end{corollary}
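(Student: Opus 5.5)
The plan is to apply \cref{lem:congruence} to the element $D^{-1}C$. Because $C$ and $D$ lie in $G(\mathcal O)$, this element also lies in $G(\mathcal O)$. The hypothesis $C\equiv D\pmod{t^m}$ says that $C$ and $D$ have the same image in $G(\mathcal O/t^m\mathcal O)$. Reduction modulo $t^m$ is a group homomorphism, so $D^{-1}C$ maps to $1$ there. Hence $D^{-1}C\in G_m$ by the definition of $G_m$.

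Next we take the class of $D^{-1}C$ in $G_m/G_{m+1}$. The isomorphism \eqref{eq:graded} identifies this class with a unique $Y\in\mathfrak g$, written as $1+t^mY$ modulo $t^{m+1}$. Concretely, fix the closed embedding $G\hookrightarrow\operatorname{GL}_N$ used in the proof of \cref{lem:congruence}. Then $D^{-1}C\equiv 1+t^mY\pmod{t^{m+1}}$ as matrices, and $Y\in\mathfrak g$ by the square-zero description \eqref{eq:square-zero-kernel}. Left-multiplying by $D$ is compatible with reduction modulo $t^{m+1}$. This gives $C\equiv D(1+t^mY)\pmod{t^{m+1}}$. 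Here $D(1+t^mY)$ is read as the element of $G(\mathcal O/t^{m+1}\mathcal O)$ obtained from the image of $D$ and the element $1+t^mY$ of the square-zero kernel, which lies in $G(R)$ by \eqref{eq:square-zero-kernel}.

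For uniqueness, suppose $Y'\in\mathfrak g$ also satisfies $C\equiv D(1+t^mY')\pmod{t^{m+1}}$. Cancelling $D$ in the group $G(\mathcal O/t^{m+1}\mathcal O)$ gives $1+t^mY\equiv 1+t^mY'\pmod{t^{m+1}}$. The bijection \eqref{eq:square-zero-kernel}, together with $J=t^m\mathcal O/t^{m+1}\mathcal O\simeq\mathbb C$, then forces $Y=Y'$.

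No step here is a real obstacle. The only point needing care is bookkeeping: the expression $D(1+t^mY)$ must be interpreted in $G(\mathcal O/t^{m+1}\mathcal O)$ rather than in $G(\mathcal O)$, since $1+t^mY$ need not lie in $G$ over $\mathcal O$. The bijection \eqref{eq:square-zero-kernel} handles exactly this.
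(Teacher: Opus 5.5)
Your proposal is correct and is the argument the paper intends: it gives no separate proof and presents the statement as an immediate consequence of \cref{lem:congruence}, via $D^{-1}C\in G_m$ and the identification of its class in $G_m/G_{m+1}$ with a unique $Y\in\mathfrak g$. Your remark that $D(1+t^mY)$ should be read in $G(\mathcal O/t^{m+1}\mathcal O)$ is a sensible clarification of the paper's notation.
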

\subsection{Reduction to a Borel subgroup} 
\label{sec:borel-normal} 
The goal of this subsection is to show that any element of $G(\mathcal{O})$ can be $q$-conjugated to an element of $B_{-}(\mathcal{O})$. We begin with a remark.
\begin{remark}
     It is a standard result that at the level of $\mathbb{C}$-points any element of $G(\mathbb{C})$ can be $G(\mathbb{C})$-conjugated to an element of $B_{-}(\mathbb{C})$.
    However, for $\mathcal{O}$-points, not every element of $G(\mathcal{O})$ can be $G(F)$-conjugated to $B_{-}(\mathcal{O})$, for example,
    \begin{align}
        A=\begin{pmatrix}
            0 & 1 \\
            -1 & 2+t
        \end{pmatrix}\in SL_2(\mathcal{O}).
    \end{align}
    Indeed, the characteristic polynomial of $A$ has discriminant $t(4+t)$, which is not a square in $F$. In particular, $A$ does not have eigenvalues in $F$ and therefore, $A$ cannnot be lower-triangularized even over $F$.
    
    Nevertheless, we prove in \cref{cor:borel-normal} that any element of $G(\mathcal{O})$ can be $q$-conjugated to an element of $B_{-}(\mathcal{O})$.
\end{remark} 
For $A\in G(\mathcal{O})$, let $a_{0}:=A\pmod t\in G(\mathbb{C})$. Motivated by the arguments in \cite{BG}, we consider the following operator on $\mathfrak{g}$:
\[
L_m(a_0)=q^m\operatorname{Ad}(a_0^{-1}) -\operatorname{Id}_{\mathfrak g}.
\] For brevity, write $L_m:=L_m(a_0)$ throughout this subsection.
Write the Jordan decomposition
$
  a_0=s_0u_0=u_0s_0,
$
with $s_0$ semisimple and $u_0$ unipotent.  Choose a Borel subgroup $B_0$ containing $a_0$ and a maximal torus $T_0\subset B_0$ containing $s_0$.  The Jordan factors $s_0$ and $u_0$ both belong to $B_0$.
Decompose
\[
  \mathfrak{g}=\bigoplus_{\lambda\in\mathbb{C}^\times}\mathfrak{g}_\lambda,
  \qquad
  \mathfrak{g}_\lambda=\{X\in\mathfrak{g}:\operatorname{Ad}(s_0)X=\lambda X\}.
\]
For $m\geq1$, set
\[
  E_m=\mathfrak{g}_{q^m},\qquad
  F_m=\bigoplus_{\lambda\neq q^m}\mathfrak{g}_\lambda.
\]

Both summands are stable under $\operatorname{Ad}(u_0)$ since $s_0$ commutes with $u_0$.

\begin{lemma}\label{lem:nonresonant}
The operator $L_m$ preserves $E_m$ and $F_m$, and its restriction to $F_m$ is invertible.
\end{lemma}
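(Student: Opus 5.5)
The plan is to use the fact that $\operatorname{Ad}(s_0)$ and $\operatorname{Ad}(u_0)$ are commuting automorphisms of $\mathfrak g$, with $\operatorname{Ad}(s_0)$ semisimple and $\operatorname{Ad}(u_0)$ unipotent. Since $a_0=s_0u_0$ is the Jordan decomposition in $G(\mathbb C)$, the map $\operatorname{Ad}(a_0)=\operatorname{Ad}(s_0)\operatorname{Ad}(u_0)$ is the Jordan decomposition of $\operatorname{Ad}(a_0)$ in $GL(\mathfrak g)$. The same holds for $\operatorname{Ad}(a_0^{-1})=\operatorname{Ad}(s_0^{-1})\operatorname{Ad}(u_0^{-1})$.

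First I show stability. Each eigenspace $\mathfrak g_\lambda$ of $\operatorname{Ad}(s_0)$ is preserved by $\operatorname{Ad}(s_0^{-1})$, which acts on it by $\lambda^{-1}$. It is also preserved by $\operatorname{Ad}(u_0^{-1})$, because $u_0$ commutes with $s_0$. Hence $\operatorname{Ad}(a_0^{-1})$, and therefore $L_m$, preserves every $\mathfrak g_\lambda$. In particular $L_m$ preserves $E_m=\mathfrak g_{q^m}$ and $F_m=\bigoplus_{\lambda\neq q^m}\mathfrak g_\lambda$.

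Next I show invertibility on each $\mathfrak g_\lambda$ with $\lambda\neq q^m$. On $\mathfrak g_\lambda$ write
\[
L_m=q^m\lambda^{-1}N-\operatorname{Id},\qquad N=\operatorname{Ad}(u_0^{-1})|_{\mathfrak g_\lambda}.
\]
The operator $N$ is unipotent, so $N-\operatorname{Id}$ is nilpotent. Thus $L_m=(q^m\lambda^{-1}-1)\operatorname{Id}+q^m\lambda^{-1}(N-\operatorname{Id})$ is a nonzero scalar plus a nilpotent operator commuting with it. Such an operator is invertible; equivalently, its only eigenvalue is $q^m\lambda^{-1}-1\neq0$. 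Summing over $\lambda\neq q^m$ shows that $L_m|_{F_m}$ is invertible.

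The only step needing care is that $\operatorname{Ad}(u_0)$ acts unipotently on $\mathfrak g$. This holds because $\operatorname{Ad}$ is a morphism of algebraic groups, and morphisms of algebraic groups preserve Jordan decompositions. No other obstacle arises.
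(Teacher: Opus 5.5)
Your proposal is correct and follows the paper's proof: both use that $\operatorname{Ad}(u_0)$ commutes with $\operatorname{Ad}(s_0)$, so it preserves each $\mathfrak g_\lambda$, and that on $\mathfrak g_\lambda$ one has $L_m=q^m\lambda^{-1}\operatorname{Ad}(u_0^{-1})-\operatorname{Id}$. The paper likewise writes this as the nonzero scalar $q^m\lambda^{-1}-1$ plus a nilpotent operator when $\lambda\neq q^m$; you additionally spell out why $\operatorname{Ad}(u_0)$ is unipotent, which the paper takes for granted.
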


\begin{proof}
On $\mathfrak{g}_\lambda$ one has
$
  \operatorname{Ad}(a_0^{-1})=\lambda^{-1}\operatorname{Ad}(u_0^{-1}).
$
Now, since $\operatorname{Ad}(u_0^{-1})$ is unipotent, the restriction of $L_m$ to
$\mathfrak{g}_\lambda$ is the sum of the nonzero scalar
$q^m\lambda^{-1}-1$ and a nilpotent endomorphism whenever
$\lambda\neq q^m$.  It is therefore invertible on $F_m$.
\end{proof}
\begin{lemma}\label{lem:compatible-borel}
There exists a Borel subgroup $B\subset G$ such that
\[
  a_0\in B,
  \qquad
  E_m\subset\operatorname{Lie}R_u(B),\quad m\geq1.
\]
\end{lemma}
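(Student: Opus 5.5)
The plan is to work inside $H:=Z_G(s_0)^\circ$, the connected centralizer of the semisimple part. It is connected reductive and contains $T_0$ and $u_0$, since $u_0$ is unipotent and commutes with $s_0$. A priori $a_0$ need not lie in $H$ when $Z_G(s_0)$ is disconnected, but $s_0\in T_0\subset H$ and $u_0\in H$, so $a_0=s_0u_0\in H$ after all. The root space decomposition of $\mathfrak g$ with respect to $T_0$ refines the eigenspace decomposition of $\operatorname{Ad}(s_0)$: a root $\alpha$ of $(G,T_0)$ has $\mathfrak g_\alpha\subset\mathfrak g_{\alpha(s_0)}$. Consequently $\mathfrak g_1=\operatorname{Lie}H=\mathfrak t_0\oplus\bigoplus_{\alpha(s_0)=1}\mathfrak g_\alpha$. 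For $m\geq1$ we have $q^m\neq1$, because $q$ is not a root of unity, and so
\[
  E_m=\bigoplus_{\alpha:\ \alpha(s_0)=q^m}\mathfrak g_\alpha .
\]
Thus it suffices to find a Borel subgroup $B\supset T_0$ with $a_0\in B$ and with every root $\alpha$ satisfying $\alpha(s_0)\in q^{\mathbb Z_{\geq1}}$ positive for $B$.

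The construction of $B$ goes through a linear functional that separates these roots. Consider the homomorphism $X^*(T_0)\to\mathbb C^\times$ given by $\alpha\mapsto\alpha(s_0)$. Its image is a finitely generated subgroup $\Gamma$ of $\mathbb C^\times$, and the subgroup $q^{\mathbb Z}\cap\Gamma$ is infinite cyclic. I would choose a homomorphism $\phi:\Gamma\to\mathbb R$ with $\phi(q)>0$. Such a $\phi$ exists: first take the torsion-free quotient $\Gamma/\Gamma_{\mathrm{tors}}\cong\mathbb Z^k$, in which the image of $q$ is nonzero, and then extend a functional that is positive on $q$. Put $\psi:=\phi(\alpha(s_0))$, which is a linear functional on $X^*(T_0)\otimes\mathbb R$. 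Then $\psi(\alpha)=m\,\phi(q)>0$ whenever $\alpha(s_0)=q^m$ with $m\geq1$, and $\psi(\alpha)=0$ for every root of $H$. Perturbing $\psi$ slightly, by a functional that is positive on the positive roots of a Borel subgroup $B_H\subset H$ containing $T_0$ and $u_0$, yields a regular functional $\psi'$. This $\psi'$ still has the sign of $\psi$ on every root where $\psi\neq0$, and on the roots of $H$ it has the sign given by $B_H$. The Borel subgroup $B$ determined by $\psi'$ then satisfies $B\cap H=B_H\ni u_0$ and $T_0\subset B$, so $s_0u_0=a_0\in B$, and each $E_m$ lies in $\operatorname{Lie}R_u(B)$.

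To complete the construction one needs a Borel subgroup $B_H$ of $H$ containing $T_0$ and $u_0$. I expect this to be the main, though mild, obstacle: $u_0$ need not normalize the fixed torus $T_0$. I would resolve it by choosing things in the opposite order. First take $B_H$ to be any Borel subgroup of $H$ containing the unipotent element $u_0$. Then take $T_0\subset B_H$ to be a maximal torus of $H$; since $s_0$ is central in $H$, we get $s_0\in T_0$. This re-choice of $T_0$ is harmless for the argument, because $E_m$ depends only on $s_0$. Finally, $T_0$ is a maximal torus of $G$ as well, since $H$ has full rank.
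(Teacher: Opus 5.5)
Your argument is correct and is essentially the paper's proof. Both pass to $Z_G(s_0)^\circ$, build a real linear functional on $X^*(T_0)_{\mathbb R}$ that vanishes on the roots of $Z_G(s_0)^\circ$ and is positive on the roots with $\alpha(s_0)=q^m$, $m\geq1$, and then perturb it by a chamber vector for a Borel subgroup of $Z_G(s_0)^\circ$ containing $u_0$. The differences are minor: the paper builds the functional from $d$ on $\mathbb Z\Psi$ extended by zero on a complement, while you use a functional on the torsion-free quotient of the value group; and the paper obtains $B_L$ as $B_0\cap L$ rather than by re-choosing $T_0$.

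Two small repairs are needed.
\begin{enumerate}
\item $q$ need not lie in $\Gamma$ (e.g.\ $G=PGL_2$ with $\alpha(s_0)=q^2$), so $\phi(q)$ is undefined as written. Define $\phi$ on the subgroup generated by $\Gamma$ and $q$ instead; $q$ still has nonzero image in its torsion-free quotient because $q$ is not a root of unity.
\item $\psi$ can vanish on roots outside $H$ (e.g.\ when $\alpha(s_0)$ is a nontrivial root of unity, which $\phi$ kills). So the perturbing functional must be chosen generic, nonzero on every root of $G$, for $\psi'$ to be regular. This is exactly the role of the extra vector $\tilde v$ in the paper.
\end{enumerate}
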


\begin{proof}
Let $L=Z_G(s_0)^\circ$.  Since $u_0$ is unipotent and commutes with $s_0$,
$u_0\in L$. The group $B_L=B_0\cap L$ is a Borel subgroup of $L$
containing $u_0$, and $T_0\subset L$.

Let $\Phi=\Phi(G,T_0)$ and put
\[
  \Psi=\{\alpha\in\Phi:\alpha(s_0)\in q^{\mathbb{Z}}\}.
\]
For $\alpha\in\Psi$, define $d(\alpha)\in\mathbb Z$ by $\alpha(s_0)=q^{d(\alpha)}.$ Since $q\in\mathbb C^\times$ is not a root of unity, this is well
defined and extends to a homomorphism $d:\mathbb Z\Psi\longrightarrow\mathbb Z.$

Let $V_{\Psi}:=\operatorname{span}_{\mathbb R}(\Psi)
  \subset X^*(T_0)_{\mathbb R}.$ 
By choosing an $\mathbb R$-linear complement $W\subset X^*(T_0)_{\mathbb R}$ to $V_\Psi$, we extend $d$ to an $\mathbb{R}$-linear functional
$\widetilde d:X^*(T_0)_{\mathbb R}\to\mathbb R$ by setting $\widetilde d|_{V_{\Psi}}=d_{\mathbb R},\; \widetilde d|_W=0.$

Since the natural pairing $X^*(T_0)_{\mathbb R}\times X_*(T_0)_{\mathbb R}
  \longrightarrow\mathbb R$ is perfect, there exists a unique $v\in X_*(T_0)_{\mathbb R}$ such that
\[
  \langle\chi,v\rangle=\widetilde d(\chi)
  \qquad
  \text{for every }\chi\in X^*(T_0)_{\mathbb R}.
\]
In particular, $ \langle\alpha,v\rangle=d(\alpha)
  \;$ for all $\alpha\in\Psi.$

Choose $v_L\in X_*(T_0)_{\mathbb{R}}$ in the open chamber for the root system of $L$ determined by
$B_L$. Since $\Psi$ is finite, for
sufficiently small $\varepsilon>0$, 
$\langle\alpha,v+\varepsilon v_L\rangle$ has the same sign as $d(\alpha)$ whenever $d(\alpha)\neq0$.
If $d(\alpha)=0$, then $\alpha$ is a root of $L$ and $\langle\alpha,v+\varepsilon v_L\rangle
  =\varepsilon\langle\alpha,v_L\rangle.$
Hence, its sign agrees with the positive system of $L$ determined by
$B_L$. Finally, choose a
sufficiently small vector $\tilde{v}\in X_*(T_0)_{\mathbb{R}}$ such that $v_{0}:=v+\varepsilon v_L+\tilde{v}$ is regular without changing any of the above
prescribed signs.

Let $B$ be the Borel subgroup containing $T_0$ determined by $v_0$. By the above construction, $B\cap L=B_L$, so $u_0\in B$; also
$s_0\in T_0\subset B$.  Thus $a_0=s_0u_0\in B$.  For $m\geq1$, the space
$E_m$ is the sum of the root spaces $\mathfrak{g}_\alpha$ satisfying
$\alpha(s_0)=q^m$. For such an $\alpha$, we have $d(\alpha)=m>0$, so it is a positive root determined by
$B$. Therefore $E_m\subset\operatorname{Lie}R_u(B)$.
\end{proof}

\begin{proposition}\label{prop:aligned}
There exist $g\in G_1$, $M\in\mathbb{N}$, and $Y_m\in E_m$, $1\leq m\leq M$
such that
\begin{equation}\label{eq:aligned}
  A^{g}
  =a_0\exp(tY_1)\exp(t^2Y_2)\cdots\exp(t^MY_M).
\end{equation}
In particular, $A$ is $q$-conjugate to an element of $B(\mathcal{O})$ by an element of $G_1$, where $B$ is as in \cref{lem:compatible-borel}.
\end{proposition}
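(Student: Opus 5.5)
Proof proposal. The approach is an inductive degree-by-degree normalization inside the congruence filtration of \cref{lem:congruence}. At each degree we kill the nonresonant part of the error with a gauge factor from $G_m$. The resonant part, which lies in $E_m$, is absorbed into the product $\exp(tY_1)\cdots\exp(t^mY_m)$. Since $q$ is not a root of unity and $\mathfrak g$ has finitely many $\operatorname{Ad}(s_0)$-eigenvalues, there is an $M$ with $E_m=0$ for all $m>M$. Beyond degree $M$ the correction is purely nonresonant, and the infinite product of gauge factors converges in $G_1$ by \eqref{eq:completion}.

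Inductive statement. For each $m\geq 1$ we construct $g_m\in G_1$, with $g_{m}g_{m-1}^{-1}\in G_m$, and elements $Y_k\in E_k$ for $k<m$, such that
\[
A^{g_m}\equiv P_{m-1}:=a_0\exp(tY_1)\cdots\exp(t^{m-1}Y_{m-1})\pmod{t^m}.
\]
The base case $m=1$ is trivial with $g_1=1$.

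Inductive step. By the corollary to \cref{lem:congruence}, we can write $A^{g_m}\equiv P_{m-1}(1+t^mZ)\pmod{t^{m+1}}$ for a unique $Z\in\mathfrak g$. Now gauge by $k=1+t^mX\in G_m$ with $X\in\mathfrak g$. Modulo $t^{m+1}$, using $\sigma_q(k)\equiv 1+q^mt^mX$ and $P_{m-1}\equiv a_0\pmod t$, we get
\[
\sigma_q(k)P_{m-1}(1+t^mZ)k^{-1}\equiv P_{m-1}\bigl(1+t^m(q^m\operatorname{Ad}(a_0^{-1})X+Z-X)\bigr)=P_{m-1}\bigl(1+t^m(Z+L_mX)\bigr).
\]
Here $P_{m-1}^{-1}\sigma_q(k)P_{m-1}\equiv 1+q^mt^m\operatorname{Ad}(a_0^{-1})X$, since only the constant term of $P_{m-1}$ matters at order $t^m$. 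Decompose $Z=Z_E+Z_F$ according to $\mathfrak g=E_m\oplus F_m$. By \cref{lem:nonresonant} we can choose $X\in F_m$ with $L_mX=-Z_F$. We then set $Y_m=Z_E$, note $\exp(t^mY_m)\equiv 1+t^mY_m\pmod{t^{m+1}}$, and take $g_{m+1}=kg_m$, using $(A^{g})^{k}=A^{kg}$. This yields $A^{g_{m+1}}\equiv P_m\pmod{t^{m+1}}$.

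Passage to the limit. The sequence $g_m$ is Cauchy, since consecutive ratios lie in $G_m$. It therefore converges to some $g\in G_1$ by \eqref{eq:completion}. For $m>M$ we have $Y_m=0$, so $P_m=P_M$ for all $m\geq M$. Hence $A^g\equiv P_M\pmod{t^m}$ for every $m$, and since $\bigcap_n G_n=\{1\}$ this gives $A^g=P_M$ exactly, which is \eqref{eq:aligned}. The one point to check is continuity of $g\mapsto A^g$ in the $t$-adic topology; this is clear since $\sigma_q$ preserves each $G_m$.

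Final assertion. By \cref{lem:compatible-borel}, $a_0\in B$ and each $Y_m\in E_m\subset\operatorname{Lie}R_u(B)$. Hence $\exp(t^mY_m)\in R_u(B)(\mathcal O)$, and so $A^g\in B(\mathcal O)$.

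Main obstacle. The delicate step is the first-order computation modulo $t^{m+1}$. One must verify that conjugating the error through $P_{m-1}$ only involves $a_0$, which holds because the other factors lie in $G_1$ and contribute only at order $t^{m+1}$ or higher. One must also confirm that the additive identification of \cref{lem:congruence} is compatible with these multiplications.
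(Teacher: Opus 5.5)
Your proposal is correct and follows the paper's proof essentially step for step. Both use the same degree-by-degree recursion through the congruence filtration, split the error $Z$ into its $E_m$ and $F_m$ components, solve $L_mX=-Z_F$ on $F_m$ via \cref{lem:nonresonant}, absorb the resonant part into $\exp(t^mY_m)$, and pass to the limit using $E_m=0$ for $m\gg 0$ and $\bigcap_n G_n=\{1\}$. (Two cosmetic points: with $g_{m+1}=kg_m$ and $k\in G_m$, the invariant should read $g_{m+1}g_m^{-1}\in G_m$, and $k$ should be an element of $G_m$ with class $1+t^mX$, e.g.\ $\exp(t^mX)$, rather than literally $1+t^mX$.)
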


\begin{proof}
We construct $g$ as the limit of a sequence $(g^{(m)})_{m\geq 1}$, where $g^{(m)}$ is constructed recursively as follows:
\[
  g^{(m-1)}=k_{m-1}\cdots k_1, \qquad k_{i}\in G_i.
\]
Suppose $k_{i}\in G_i$ and $Y_{i}\in E_i$, $1\leq i\leq m-1$, have been chosen in such a way that
\[
  A^{g^{(m-1)}}\equiv C_{m-1}\pmod{t^m},
  \qquad
  C_{m-1}=a_0\prod_{j=1}^{m-1}\exp(t^jY_j).
\]
We set $g^{(0)}=1$ and $C_0=a_0$. By
\eqref{eq:error}, there is a unique $Z_m\in\mathfrak{g}$ such that
\begin{equation}\label{eq:aligned-error}
  A^{g^{(m-1)}}
  \equiv C_{m-1}(1+t^mZ_m)\pmod{t^{m+1}}.
\end{equation}
Write
\[
  Z_m=Z_m^{\mathrm{r}}+Z_m^{\mathrm{nr}},
  \qquad
  Z_m^{\mathrm{r}}\in E_m,
  \quad Z_m^{\mathrm{nr}}\in F_m,
\]
and set $Y_m=Z_m^{\mathrm{r}}$. By \cref{lem:nonresonant}, there exists a
unique $X_m\in F_m$ satisfying
\begin{equation}\label{eq:homological}
  \bigl(q^m\operatorname{Ad}(a_0^{-1})-
  \operatorname{Id}_{\mathfrak g}\bigr)X_m=-Z_m^{\mathrm{nr}}.
\end{equation}
Choose $k_m\in G_m$ whose class modulo $G_{m+1}$ is $1+t^mX_m$. Clearly, we have 
\[
  \sigma_q(k_m)\equiv1+q^mt^mX_m \pmod{t^{m+1}},
  \qquad
  k_m^{-1}\equiv1-t^mX_m \pmod{t^{m+1}}.
\]
Using \eqref{eq:aligned-error} and moving the left factor past
$C_{m-1}$ gives
\begin{align*}
  \sigma_q(k_m)A^{g^{(m-1)}}k_m^{-1}
  &\equiv C_{m-1}\Bigl(1+t^m\bigl[Z_m
       +q^m\operatorname{Ad}(C_{m-1}^{-1})X_m-X_m\bigr]\Bigr)\\
  &\equiv C_{m-1}(1+t^mY_m)
  \pmod{t^{m+1}},
\end{align*}
where we use the fact that $C_{m-1}\equiv a_0\pmod{t}$, which gives 
$t^m\operatorname{Ad}(C_{m-1}^{-1})X_m\equiv
t^m\operatorname{Ad}(a_0^{-1})X_m$ modulo $t^{m+1}$; then
\eqref{eq:homological} cancels the nonresonant part $Z_m^{\mathrm{nr}}$.  Since
$\exp(t^mY_m)\equiv1+t^mY_m\pmod{t^{m+1}}$, the recursion continues.

Since $k_i\in G_i$, the sequence $(g^{(m)})_{m\geq 1}$ is Cauchy and therefore converges to an element $g:=\lim_{m\to\infty}g^{(m)}\in G_1$. Since $q\in\mathbb{C}^{\times}$ is not a root of unity and $\mathfrak{g}$ is finite dimensional,
$E_m=0$ for all sufficiently large $m$, hence the product on the right-hand side of \eqref{eq:aligned}
stabilizes. Passing to the inverse limit yields
\eqref{eq:aligned}.

Finally, since $a_0\in B$ and $Y_m\in\operatorname{Lie}R_u(B)$ by
\cref{lem:compatible-borel},
the right-hand side of \eqref{eq:aligned} lies in
$B(\mathcal{O})$.
\end{proof}

\begin{corollary}\label{cor:borel-normal}
For every $A\in G(\mathcal{O})$, there exists $g\in G(\mathcal{O})$ such that
$A^g\in B_-(\mathcal{O})$.
\end{corollary}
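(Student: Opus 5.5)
The plan is to combine \cref{prop:aligned} with a constant conjugation carrying the Borel subgroup $B$ of \cref{lem:compatible-borel} onto $B_-$.

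\textbf{Step 1.} Apply \cref{prop:aligned} to obtain $g_1\in G_1\subset G(\mathcal O)$ with
\[
A^{g_1}=a_0\exp(tY_1)\cdots\exp(t^MY_M)\in B(\mathcal O),
\]
where $B$ is the Borel subgroup produced in \cref{lem:compatible-borel}. Membership in $B(\mathcal O)$ holds because $a_0\in B(\mathbb C)$ and each $\exp(t^mY_m)$ lies in $R_u(B)(\mathcal O)$, since $Y_m\in E_m\subset\operatorname{Lie}R_u(B)$.

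\textbf{Step 2.} All Borel subgroups of $G$ are conjugate over $\mathbb C$, so we may choose $x\in G(\mathbb C)$ with $xBx^{-1}=B_-$. Regard $x$ as a constant element of $G(\mathcal O)$. Then $\sigma_q(x)=x$, so the $q$-gauge action of $x$ is ordinary conjugation:
\[
(A^{g_1})^x=\sigma_q(x)A^{g_1}x^{-1}=xA^{g_1}x^{-1}\in xB(\mathcal O)x^{-1}=B_-(\mathcal O).
\]

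\textbf{Step 3.} Set $g=xg_1\in G(\mathcal O)$. By the cocycle rule $(A^{g_1})^x=A^{xg_1}$, we get $A^g\in B_-(\mathcal O)$, which is the claim.

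All steps are essentially formal given the earlier results. The only point needing care is that the conjugating element $x$ is constant, so that $\sigma_q$ fixes it and the $q$-gauge action reduces to ordinary conjugation. It is also worth noting that $B$ contains $T_0$ rather than $T$; this is harmless, because conjugacy of Borel subgroups in $G(\mathbb C)$ does not require a common torus.
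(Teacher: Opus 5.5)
Your proof is correct and is essentially the paper's argument. You apply \cref{prop:aligned} to land in $B(\mathcal O)$ via an element of $G_1$, then gauge by a constant $x\in G(\mathbb C)$ with $xBx^{-1}=B_-$, using $\sigma_q(x)=x$ and the rule $(A^{g_1})^x=A^{xg_1}$.
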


\begin{proof}
Apply \cref{prop:aligned} to get $g'\in G_1$ such that $A^{g'}\in B(\mathcal{O})$, and choose $h\in G(\mathbb{C})$ with
$hBh^{-1}=B_-$.  Then $A^{hg'}\in B_-(\mathcal{O})$.
\end{proof}
\subsection{Proof of the integral theorem} 
For any $A\in G(\mathcal{O})$, put $a_0:=A\bmod t\in G(\mathbb C)$.
\begin{theorem}\label{thm:rigidity}
Let $C,D\in G(\mathcal O)$ be such that
$C\equiv D\pmod{t^N}$ for some $N\geq 1$. Moreover, assume that $L_m(c_0)$
is invertible for all $m\geq N$. Then there exists $k\in G_N$ such
that $C^k=D$.
\end{theorem}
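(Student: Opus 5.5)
We construct $k$ as a convergent infinite product $k=\lim_{m\to\infty}k_m k_{m-1}\cdots k_N$ with $k_m\in G_m$. This is the same successive-approximation scheme as in the proof of \cref{prop:aligned}. The difference is that the hypothesis that $L_m(c_0)$ is invertible for all $m\geq N$ lets us kill the whole error in each degree, not only its nonresonant part. Here $c_0=C\bmod t$, and since $N\geq1$ we also have $c_0=D\bmod t$.

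The induction runs as follows. Put $k^{(N-1)}=1$. Suppose that $k^{(m-1)}=k_{m-1}\cdots k_N\in G_N$ has been constructed with $C^{k^{(m-1)}}\equiv D\pmod{t^m}$; for $m=N$ this is the hypothesis. Both sides lie in $G(\mathcal O)$, because $G_N\subset G(\mathcal O)$ and $\sigma_q$ preserves $G(\mathcal O)$. By \eqref{eq:error} there is a unique $Z_m\in\mathfrak g$ with
\[
  D\equiv C^{k^{(m-1)}}(1+t^mZ_m)\pmod{t^{m+1}}.
\]
Since $L_m(c_0)=q^m\operatorname{Ad}(c_0^{-1})-\operatorname{Id}$ is invertible, choose $X_m\in\mathfrak g$ with $L_m(c_0)X_m=Z_m$. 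Then, using \cref{lem:congruence}, choose $k_m\in G_m$ whose class in $G_m/G_{m+1}$ is $1+t^mX_m$. The computation from the proof of \cref{prop:aligned} gives
\[
  \sigma_q(k_m)\,C^{k^{(m-1)}}\,k_m^{-1}\equiv C^{k^{(m-1)}}\bigl(1+t^m[q^m\operatorname{Ad}(c_0^{-1})X_m-X_m]\bigr)\pmod{t^{m+1}}.
\]
Two facts are used here: $C^{k^{(m-1)}}\equiv c_0\pmod t$, and the factor $\sigma_q(k_m)\equiv 1+q^mt^mX_m$ can be moved past $C^{k^{(m-1)}}$ at the cost of $\operatorname{Ad}$ by its inverse, which equals $\operatorname{Ad}(c_0^{-1})$ modulo $t$. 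The bracket is $L_m(c_0)X_m=Z_m$. Therefore $C^{k_mk^{(m-1)}}\equiv D\pmod{t^{m+1}}$, using $(A^g)^h=A^{hg}$. Set $k^{(m)}=k_mk^{(m-1)}$.

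For convergence, note that $k_m\in G_m$, so the partial products form a Cauchy sequence in $G_N$, which is closed in $G(\mathcal O)$; let $k$ be the limit. Then $k\equiv k^{(m)}\pmod{t^{m+1}}$, so $C^k\equiv C^{k^{(m)}}\equiv D\pmod{t^{m+1}}$ for every $m$. Since $\bigcap_n G_n=\{1\}$ by \eqref{eq:completion}, we get $C^k=D$.

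The main point requiring care is the congruence computation. One must check that multiplying by $k_m$ only perturbs $C^{k^{(m-1)}}$ in degree $m$, so that lower-order agreement is preserved, and that the sign conventions match the direction in which $D$ is expressed relative to $C^{k^{(m-1)}}$. If the signs come out reversed, solve $L_m(c_0)X_m=-Z_m$ with the error written on the other side. Both choices work because $L_m(c_0)$ is invertible, so this step is routine once the first-order expansion in $G_m/G_{m+1}$ is set up as in \cref{prop:aligned}.
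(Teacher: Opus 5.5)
Your proof is correct and follows the paper's argument: the same degree-by-degree correction in $G_m/G_{m+1}$, where invertibility of $L_m(c_0)$ kills the whole degree-$m$ error, followed by passing to the limit in $G_N$. The only difference is that you write the error on the other side, as $D\equiv C^{k^{(m-1)}}(1+t^mZ_m)$ with $L_m(c_0)X_m=Z_m$, whereas the paper writes $C^{k^{(m-1)}}\equiv D(1+t^mY_m)$ with $X_m=-L_m(c_0)^{-1}Y_m$; your sign computation is already correct, so the hedge in your final paragraph is unnecessary.
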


\begin{proof}
We construct $k$ as a limit of a sequence of elements in $G_N$. Suppose that $k^{(m-1)}\in G_N$ has been
chosen so that
\[
  C^{k^{(m-1)}}\equiv D\pmod{t^m}.
\]
At the initial stage $m=N$, take $k^{(N-1)}=1$.  There is a unique
$Y_m\in\mathfrak{g}$ such that
\[
  C^{k^{(m-1)}}\equiv D(1+t^mY_m)\pmod{t^{m+1}}.
\]
Set $X_m=-L_m(c_0)^{-1}Y_m$, and choose $k_m\in G_m$ whose class modulo $G_{m+1}$ is $1+t^mX_m$. We have
\begin{align}
  \sigma_q(k_m)C^{k^{(m-1)}}k_m^{-1}
  \equiv \left(1+q^mt^mX_m\right)D\left(1+t^mY_m\right)\left(1-t^mX_m\right)\\
  \equiv D\left(1+t^m[Y_m+L_m(c_0)X_m]\right)
  \equiv D\pmod{t^{m+1}}.
\end{align}
Thus, with $k^{(m)}=k_mk^{(m-1)}$, the error is moved one degree higher, that is, the image of $D^{-1}C^{k^{(m)}}$ in $G_m/G_{m+1}$ becomes the identity.
The limit $k:=\lim_{m\to\infty}k^{(m)}$ lies in $G_N$. Passing to the limit gives $C^k=D$.
\end{proof}
\begin{remark}
The element $k\in G_N$ in \cref{thm:rigidity} is in fact unique. Indeed, suppose that $k_1,k_2\in G_N$ both satisfy $C^{k_i}=D$, and set
$u=k_2k_1^{-1}$.  Then $D^u=D$.  If $u\neq1$, choose the least $m\geq N$
for which
\[
  u\equiv1+t^mX\pmod{t^{m+1}},\qquad X\neq0.
\]
The degree-$m$ part of $D^u=D$ is $L_m(c_0)X=0$, contradicting the
invertibility of $L_m(c_0)$.
\end{remark}
We now need a lemma that guarantees invertibility of $L_{m}(c_0)$ for all sufficiently large $m$.
\begin{lemma}\label{lem:finite-resonance}
For every $c_0\in G(\mathbb{C})$, the operator $L_m(c_0)$ is invertible for
all sufficiently large $m$.
\end{lemma}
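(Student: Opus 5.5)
The plan is to compute the eigenvalues of $\operatorname{Ad}(c_0^{-1})$ on $\mathfrak g$ and show that $q^{-m}$ can be one of them for only finitely many $m$. Since
\[
L_m(c_0)=q^m\operatorname{Ad}(c_0^{-1})-\operatorname{Id}_{\mathfrak g}=q^m\bigl(\operatorname{Ad}(c_0^{-1})-q^{-m}\operatorname{Id}_{\mathfrak g}\bigr),
\]
$L_m(c_0)$ fails to be invertible exactly when $q^{-m}$ is an eigenvalue of $\operatorname{Ad}(c_0^{-1})$.

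First I would take the Jordan decomposition $c_0=s_0u_0$, exactly as in the setup before \cref{lem:nonresonant}. This gives
\[
\operatorname{Ad}(c_0^{-1})=\operatorname{Ad}(s_0^{-1})\operatorname{Ad}(u_0^{-1}),
\]
where the two factors commute and $\operatorname{Ad}(u_0^{-1})$ is unipotent. Hence the eigenvalues of $\operatorname{Ad}(c_0^{-1})$ coincide with those of $\operatorname{Ad}(s_0^{-1})$, namely $\lambda^{-1}$ for $\lambda$ running over the eigenvalues of $\operatorname{Ad}(s_0)$.

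Equivalently, I would invoke \cref{lem:nonresonant} directly. With the eigenspaces $\mathfrak g_\lambda$ of $\operatorname{Ad}(s_0)$, $L_m(c_0)$ is invertible on
\[
F_m=\bigoplus_{\lambda\neq q^m}\mathfrak g_\lambda .
\]
So $L_m(c_0)$ is invertible on all of $\mathfrak g$ as soon as $E_m=\mathfrak g_{q^m}=0$, that is, as soon as $q^m$ is not an eigenvalue of $\operatorname{Ad}(s_0)$.

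The set $\Lambda$ of eigenvalues of $\operatorname{Ad}(s_0)$ is a finite subset of $\mathbb C^\times$, of size at most $\dim\mathfrak g$. Because $q$ is not a root of unity, the map $m\mapsto q^m$ is injective on $\mathbb Z$. Each $\lambda\in\Lambda$ therefore equals $q^m$ for at most one $m$. It follows that the set of $m\geq1$ with $E_m\neq0$ has at most $|\Lambda|$ elements. Taking $N$ larger than all of them, $L_m(c_0)$ is invertible for every $m\geq N$.

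There is no real obstacle here. The only point to be careful about is that invertibility on the resonant piece fails only when the eigenvalue of $\operatorname{Ad}(s_0)$ is exactly $q^m$. This is precisely what \cref{lem:nonresonant} already records, since on $\mathfrak g_\lambda$ the operator is the nonzero scalar $q^m\lambda^{-1}-1$ plus a nilpotent part.
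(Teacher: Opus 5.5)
Your proof is correct and follows the paper's argument. The paper also shows that $E_m=\mathfrak g_{q^m}$ vanishes for all large $m$ and then applies \cref{lem:nonresonant} to get invertibility on $F_m=\mathfrak g$; you additionally spell out why $E_m$ vanishes (finitely many eigenvalues of $\operatorname{Ad}(s_0)$, and $m\mapsto q^m$ is injective because $q$ is not a root of unity), which the paper's one-line proof leaves implicit.
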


\begin{proof}
Since $\dim(\mathfrak{g})< \infty$, $E_m=0$ for
all sufficiently large $m$. The lemma now follows from Lemma \ref{lem:nonresonant}.
\end{proof}
\begin{proposition}\label{prop:insertion}
Let $C\in B_-(\mathcal{O})$.  There exists $g\in G_1$ such that
\[
  C^g\in B_-(F)\dot cB_-(F).
\]
\end{proposition}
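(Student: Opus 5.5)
The plan is to follow the outline from the introduction. We deform $C$ by the Coxeter factor $v_N$ of \cref{lem:coxeter-factor} and realize the deformation by an integral $q$-gauge transformation using \cref{thm:rigidity}.

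Let $c_0:=C \bmod t\in B_-(\mathbb{C})$. By \cref{lem:finite-resonance}, there is $N\geq 1$ such that $L_m(c_0)$ is invertible for all $m\geq N$.

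Set
\[
  v_N=x_1(t^N)\cdots x_r(t^N),\qquad D:=Cv_N .
\]
By \cref{lem:coxeter-factor}, $v_N\in G(\mathcal O)$, $v_N\equiv 1\pmod{t^N}$, and $v_N\in B_-(F)\dot cB_-(F)$. Since $C\in B_-(\mathcal O)\subset B_-(F)$, left multiplication by $C$ preserves the double coset, so $D\in B_-(F)\dot cB_-(F)$. Moreover $D\in G(\mathcal O)$ and $D\equiv C\pmod{t^N}$.

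The hypotheses of \cref{thm:rigidity} now hold for the pair $(C,D)$ with this $N$. Note that the constant term of $D$ is also $c_0$, so the operator $L_m(c_0)$ in the theorem is unambiguous. The theorem gives $k\in G_N\subset G_1$ with $C^k=D\in B_-(F)\dot cB_-(F)$, and we take $g=k$.

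I do not expect a real obstacle; the substantive inputs are \cref{thm:rigidity} and \cref{lem:finite-resonance}. One point needs care: $N$ must be chosen from the resonances of $c_0$ \emph{before} forming $v_N$. This is legitimate because $v_N\equiv1 \pmod{t^N}$ leaves $c_0$ unchanged. Also, the degree-one congruence class is not needed at all, since $N\geq1$ ensures $k\in G_1$.
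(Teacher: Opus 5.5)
Your proposal is correct and follows essentially the same argument as the paper. You choose $N$ via \cref{lem:finite-resonance}, set $D=Cv_N$ using \cref{lem:coxeter-factor}, and apply \cref{thm:rigidity} to get $g\in G_N\subset G_1$ with $C^g=D$.
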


\begin{proof}
Choose $N$ such that $L_m(c_0)$ is invertible for all $m\geq N$, as is
possible by \cref{lem:finite-resonance}.  Let $v_N$ be as in
\cref{lem:coxeter-factor} and put $D=Cv_N$.  Then
$D\equiv C\pmod{t^N}$ and
\[
  D\in B_-(F)\dot cB_-(F).
\]
By \cref{thm:rigidity}, there exists $g\in G_N\subset G_1$ such that $C^g=D$.
\end{proof}
\begin{proof}[Proof of \cref{thm:integral-intro}]
By \cref{cor:borel-normal}, there exists $g_1\in G(\mathcal O)$ such that $ A^{g_1}\in B_-(\mathcal O).$
Applying \cref{prop:insertion}, we obtain $g_2\in G_1$ such that
\[
  (A^{g_1})^{g_2}
  =
  A^{g_2g_1}
  \in B_-(F)\dot cB_-(F).
\]
Since $g_2g_1\in G(\mathcal O)$, the result follows.
\end{proof}
\bibliographystyle{alphanum}
\bibliography{references}

@article{Ar,
  title={Irreducible connections admit generic oper structures},
  author={Arinkin, Dima},
  journal={arXiv preprint arXiv:1602.08989},
  year={2016}
}

@article{BG,
  title={Conjugacy classes in loop groups and {$G$}-bundles on elliptic curves},
  author={Baranovsky, Vladimir and Ginzburg, Victor},
  journal={arXiv preprint alg-geom/9607008},
  year={1996}
}

@book{Bou,
  title={Lie groups and Lie algebras: Chapters 1-3},
  author={Bourbaki, Nicolas},
  volume={1},
  year={1975},
  publisher={Springer Science \& Business Media}
}

@misc{Con,
  author       = {Conrad, Brian},
  title        = {Reductive groups over fields},
  year         = {2020},
  howpublished = {Lecture notes, Stanford University},
  note         = {Notes by Tony Feng. Version of January 19, 2020},
  url          = {https://math.stanford.edu/~conrad/249BW16Page/handouts/249B_2016.pdf}
}

@book{Del,
  title={{\'E}quations diff{\'e}rentielles {\`a} points singuliers r{\'e}guliers},
  author={Deligne, Pierre},
  year={2006},
  publisher={Springer}
}

@article{DM,
  title={Points fixes des automorphismes quasi-semi-simples},
  author={Digne, Fran{\c{c}}ois and Michel, Jean},
  journal={Comptes Rendus Mathematique},
  volume={334},
  number={12},
  pages={1055--1060},
  year={2002},
  publisher={Elsevier}
}

@article{DV,
  title={Arithmetic theory of $q$-difference equations. {T}he $q$-analogue of {G}rothendieck-{K}atz's conjecture on $p$-curvatures},
  author={Di Vizio, Lucia},
  journal={arXiv preprint math/0104178},
  year={2001}
}

@article{FFR,
  title={Gaudin model, {B}ethe ansatz and critical level},
  author={Feigin, Boris and Frenkel, Edward and Reshetikhin, Nikolai},
  journal={Communications in Mathematical Physics},
  volume={166},
  number={1},
  pages={27--62},
  year={1994},
  publisher={Springer}
}

@article{FFT,
  title={Gaudin models with irregular singularities},
  author={Feigin, Boris and Frenkel, Edward and Laredo, V. Toledano},
  journal={Advances in Mathematics},
  volume={223},
  number={3},
  pages={873--948},
  year={2010},
  publisher={Elsevier}
}

@article{FKSZ,
  title={$ q $-opers, {QQ}-systems, and {B}ethe {A}nsatz},
  author={Frenkel, Edward and Koroteev, Peter and Sage, Daniel S. and Zeitlin, Anton M.},
  journal={Journal of the European Mathematical Society},
  volume={26},
  number={1},
  pages={355--405},
  year={2023}
}

@article{FRS,
  author = {Frenkel, E. and Reshetikhin, N. and
            Semenov-Tian-Shansky, M. A.},
  title = {{Drinfeld--Sokolov} reduction for difference operators
           and deformations of {$W$}-algebras.
           {I}. {The} case of {Virasoro} algebra},
  journal = {Communications in Mathematical Physics},
  volume = {192},
  number = {3},
  pages = {605--629},
  year = {1998},
  doi = {10.1007/s002200050311},
  eprint = {q-alg/9704011},
  archivePrefix = {arXiv}
}

@article{FZ,
  title={Any flat bundle on a punctured disc has an oper structure},
  author={Frenkel, Edward and Zhu, Xinwen},
  journal={Mathematical Research Letters},
  volume={17},
  number={1},
  pages={27--37},
  year={2010},
  publisher={International Press of Boston, Inc. Somerville, MA 02143, USA}
}

@incollection{Fre,
  title={Gaudin model and opers},
  author={Frenkel, Edward},
  booktitle={Infinite dimensional algebras and quantum integrable systems},
  pages={1--58},
  year={2005},
  publisher={Springer}
}

@inproceedings{GHKR2,
  title={Dimensions of some affine {D}eligne-{L}usztig varieties},
  author={G{\"o}rtz, Ulrich and Haines, Thomas J. and Kottwitz, Robert E. and Reuman, Daniel C.},
  booktitle={Annales scientifiques de l'Ecole normale sup{\'e}rieure},
  volume={39},
  number={3},
  pages={467--511},
  year={2006}
}

@article{GHKR1,
  title={Affine {Deligne--Lusztig} varieties in affine flag varieties},
  author={G{\"o}rtz, Ulrich and Haines, Thomas J. and Kottwitz, Robert E. and Reuman, Daniel C.},
  journal={Compositio Mathematica},
  volume={146},
  number={5},
  pages={1339--1382},
  year={2010},
  publisher={London Mathematical Society}
}

@article{HN,
 title={Minimal length elements of extended affine {Weyl} groups},
  author={He, Xuhua and Nie, Sian},
  journal={Compositio Mathematica},
  volume={150},
  number={11},
  pages={1903--1927},
  year={2014},
  publisher={London Mathematical Society}
}

@article{HHSZ,
  title={Parahoric reduction theory of formal connections (or {H}iggs fields)},
  author={Hu, Zhi and Huang, Pengfei and Sun, Ruiran and Zong, Runhong},
  journal={arXiv preprint arXiv:2409.05073},
  year={2024}
}

@book{HumphreysCoxeter,
  title={Reflection groups and {Coxeter} groups},
  author={Humphreys, James E.},
  volume={29},
  year={1990},
  publisher={Cambridge {U}niversity {P}ress, {C}ambridge}
}

@article{HV,
  title={The {N}ewton stratification on deformations of local {$G$}-shtukas},
  author={Hartl, Urs and Viehmann, Eva},
  journal={arXiv preprint arXiv:0810.0821},
  year={2008}
}

@incollection{Jantzen,
  title={Nilpotent orbits in representation theory},
  author={Jantzen, Jens Carsten},
  booktitle={Lie theory: Lie algebras and representations},
  pages={1--211},
  year={2004},
  publisher={Springer}
}

@article{Liu,
  title={$q$-Conjugacy classes in loop groups},
  author={Liu, Dongwen},
  journal={Proceedings of the American Mathematical Society},
  volume={140},
  number={9},
  pages={3297--3311},
  year={2012}
}

@article{Lus2,
  title={Elliptic elements in a {Weyl} group: a homogeneity property},
  author={Lusztig, George},
  journal={Representation Theory of the American Mathematical Society},
  volume={16},
  number={4},
  pages={127--151},
  year={2012}
}

@article{Lus1,
  title={From conjugacy classes in the {Weyl} group to unipotent classes},
  author={Lusztig, George},
  journal={Representation Theory of the American Mathematical Society},
  volume={15},
  number={14},
  pages={494--530},
  year={2011}
}

@article{NZ,
  title={On classification of $\sigma_q$-conjugacy classes of a loop group},
  author={Nie, Sian and Zhou, Peipei},
  journal={Journal of Algebra},
  volume={459},
  pages={29--42},
  year={2016},
  publisher={Elsevier}
}

@book{Ser,
  author    = {Serre, Jean-Pierre},
  title     = {Galois Cohomology},
  series    = {Springer Monographs in Mathematics},
  publisher = {Springer-Verlag},
  address   = {Berlin},
  year      = {2002},
  note      = {Translated from the French by Patrick Ion and revised by the author; corrected reprint of the 1997 English edition},
  isbn      = {978-3-540-42192-4},
  doi       = {10.1007/978-3-642-59141-9}
}

@article{St,
  title={Regular elements of semi-simple algebraic groups},
  author={Steinberg, Robert},
  journal={Publications Math{\'e}matiques de l'IH{\'E}S},
  volume={25},
  pages={49--80},
  year={1965}
}

@article{SS,
  author  = {Semenov-Tian-Shansky, M. A. and Sevostyanov, A. V.},
  title   = {Drinfeld--{S}okolov reduction for difference operators
             and deformations of {$W$}-algebras. {II}.
             {T}he general semisimple case},
  journal = {Communications in Mathematical Physics},
  volume  = {192},
  number  = {3},
  pages   = {631--647},
  year    = {1998},
  doi     = {10.1007/s002200050312}
}
\end{document}